\numberwithin{equation}{section}
\newcommand{\bs}{\symbol{92}}
\newcommand*\samethanks[1][\value{footnote}]{\footnotemark[#1]}
\crefname{hypothesis}{Hypothesis}{Hypotheses}
\def\cF{{\mathcal F}}
\def\N{{\mathbb N}}
\def\Z{{\mathbb Z}}
\def\R{{\mathbb R}}
\def\C{{\mathbb C}}
\def\idty{{\operatorname{Id}}}
\def\dist{{\operatorname{dist}}}
\def\cA{{\mathcal A}}    \def\cS{{\mathcal S}}         \def\cO{{\mathcal O}}  \def\cD{{\mathcal D}}          \def\cF{{\mathcal F}}    \def\cX{{\mathcal X}} \def\cY{{\mathcal Y}}
\title{Control of neural field equations with step-function inputs\thanks{\funding{This work is partially supported by grant R21MH132240 from the US National Institutes of Health to SC.}}}
\author{Cyprien Tamekue\thanks{Department of Electrical and Systems Engineering, Washington University in St. Louis, St. Louis, 63130, MO, USA. (\email{cyprien@wustl.edu}, \email{shinung@wustl.edu}).}  \and ShiNung Ching\samethanks}
\begin{document}

\maketitle

% REQUIRED
\begin{abstract}
Wilson-Cowan and Amari-type models describe the nonlinear dynamics of interacting neural populations, providing a fundamental framework for modeling how sensory and other exogenous inputs shape neural activity. This article investigates the controllability properties of Amari-type neural field equations subject to piecewise constant-in-time and constant-in-time inputs. The model describes the time evolution of polarization in neural tissue within a spatial continuum, with synaptic interactions modeled by a convolution kernel. We study the synthesis of constant-in-time and piecewise constant-in-time inputs to achieve two-point boundary-type control objectives, namely, steering neural activity from an initial state to a prescribed target state. This approach is particularly relevant for predicting the emergence of paradoxical neural representations, such as discordant visual illusions elicited by overt sensory stimuli. We first present a control synthesis based on the Banach fixed-point theorem, yielding an iterative construction of a constant-in-time input under minimal regularity assumptions on the kernel and transfer function; however, it has practical limitations, even in the linear case. To overcome these challenges, we then develop a generic synthesis framework based on the flow of neural dynamics drift, enabling explicit piecewise constant and constant-in-time inputs. Extensive numerical results in one and two spatial dimensions confirm the effectiveness of the proposed syntheses and show that they consistently outperform inputs derived from naive linearization at the initial or target states when these states are not equilibria of the drift dynamics. By providing a mathematically rigorous framework for controlling delay-free Amari-type neural fields, this work advances our understanding of nonlinear neural population control with potential applications in computational neuroscience, psychophysics, and neurostimulation.
\end{abstract}
%incorporating a localized control function
% REQUIRED
\begin{keywords}
Nonlinear control, control synthesis, step controllability, neural field equations, spatially forced pattern forming system, predicting visual illusions.
\end{keywords}
\begin{MSCcodes}
93C20, 92C20, 93C10, 35Q92, 93B50.
\end{MSCcodes}
\section{Introduction}

Wilson-Cowan \cite{wilson1973mathematical} and Amari-type \cite{amari1977dynamics} equations are neural field models that describe the nonlinear dynamics of interacting populations of excitatory and inhibitory neurons at the scale of a cortical macrocolumn (a roughly 1mm diameter section of the cortex). These models are favored for their balance of biophysical interpretability and mathematical tractability. They have been instrumental in exploring a range of questions in theoretical neuroscience, such as how sensory inputs drive neural activity, particularly in visual perception contexts \cite{bertalmio2020visual,bolelli2025neural,bressloff2001geometric,ermentrout1979mathematical,nicks2021understanding,rule2011model,tamekue2023cortical,tamekue2024mathematical,tamekue2025reproducibility}. For example, the steady-state solutions of Amari-type equations have been used to mechanistically model visual illusions, such as the visual MacKay effect or Billock and Tsou experiments \cite{bolelli2025neural,tamekue2023cortical,tamekue2024mathematical,tamekue2025reproducibility}, as asymptotic outcomes of cortical activity driven by sensory inputs.

An important emerging question in the analysis of neural field models concerns their controllability properties \cite{tamekue2024mathematical}, i.e., the extent to which they are labile to exogenous inputs. Such analysis provides a mathematical foundation for both scientific and engineering focus. For instance, determining how sensory inputs drive the formation of neural representations or how external stimulation of the brain may modify brain dynamics in clinical contexts. This perspective advances the theoretical understanding of neural dynamics and opens new avenues for practical applications, such as predicting or designing visual stimuli that induce targeted perceptual effects.

In this work, we study control analysis and synthesis for neural field equations of the Amari type with constant sensory inputs over time. The considered equation reads as
 \begin{equation}\label{eq:NF-intro}\tag{NF}
 \begin{split}
 	\partial_t a(x,t) + \alpha a(x,t) - \mu\int_{\R^d}\omega(x,y)f(a(y,t))dy &= I(x),\qquad (t,x)\in\R_{+}\times\mathbb R^d\\
a(x,0) &= a_0(x),\qquad x\in\R^d.
  \end{split}
 \end{equation}
Here, $d\in\N^{*}$ (but it is usually taken to be one or two) where $\N^{*}$ stands for the set of positive integers, and $\R_{+}:=[0,\infty)$. 
 The time-evolution of the average membrane potential at time $t\ge 0$ is given by the map $x\mapsto a(x,t)$, and the state of cortical activity at time $t=0$ is assumed to be provided by the function $a_0$. Here, $\alpha>0$ is the time scale of activity decay, $I$ is a constant in time input, and the transfer function $f$ captures the nonlinear response of neurons after activation. The parameter $\mu>0$ measures the strength of neuronal interaction while the kernel $\omega(x,y)=\omega(x-y)$ models the synaptic connections between neurons at two different locations $x$ and $y$. We refer to \cite{cook2022neural,coombes2023next} for a recent overview of neural population models. 
 
Although~\eqref{eq:NF-intro} is one of the most parsimonious neural population models widely used to mechanistically describe diverse neurological and visual phenomena (see, e.g.,~\cite{bressloff2011spatiotemporal, ermentrout1998neural} for reviews), it inherently neglects the finite time required for stimulus---represented by the input~$I$---to propagate between neurons at different cortical locations. Specifically,~\eqref{eq:NF-intro} assumes infinite axonal transmission speed between a presynaptic neuron at~$y$ and a postsynaptic neuron at~$x$. In reality, conduction velocities vary across neural populations and typically range from about~$1$~m/s to~$100$~m/s~\cite{shepherd2004synaptic}. Incorporating such finite transmission delays gives rise to delayed neural field equations, which have been extensively studied both theoretically and numerically; see, for instance,~\cite{chaillet2017robust, coombes2009delays, faye2010new, sequeira2022numerical,veltz2013interplay}. Notably, transmission delays play a crucial role in the emergence of oscillatory dynamics associated with neurological disorders, such as Parkinson's disease~\cite{asadi2024dynamics, pasillas2013delay}.

In this work, we deliberately focus on the delay-free formulation~\eqref{eq:NF-intro} for two main reasons. First, despite its simplicity, it still captures a wide range of neural processes. Second, establishing a control framework in this setting provides a robust foundation for addressing more complex models that include state-dependent delays.

It is also worth noting that a more realistic model would treat cortical tissue as a bounded domain~$\Omega \subset \mathbb{R}^d$ rather than the entire space~$\mathbb{R}^d$; see, for example,~\cite[Section~7.1]{veltz2010local}. To mitigate this idealization, we assume that the connectivity kernel~$\omega$ decays rapidly at infinity, resembling a ``Mexican hat'' or ``wizard hat'' shape.

Finally, because our control synthesis is formulated in terms of the flow maps generated by the neural field dynamics, the results naturally extend to inhomogeneous kernels (i.e., $\omega(x,y) \neq \omega(x - y)$) or bounded domains~$\Omega$, provided suitable assumptions on~$\omega$ ensure the applicability of the synthesis framework.

To the authors' knowledge, the controllability and stability properties of neural field equations like~\eqref{eq:NF-intro} have been considered for absolute stability of homogeneous solutions~\cite{faugeras2008absolute}, for feedback stabilization of deep-brain stimulation \cite{brivadis2023existence,chaillet2017robust} (see also \cite{annabi2025activity,brivadis2024adaptive} for observability properties of these equations) or for open-loop control of traveling waves \cite{ziepke2019control}. We also refer to~\cite[p.~28]{Coombes2014} for a related full-field inverse neural problem, where the authors investigated how to construct the kernel $\omega$ under the assumption that both the initial state~$a_0(x)$ and the solution~$a(x,t)$ are known for all~$x$ and~$t$. A general controllability question--the ability to steer the system from an initial state to a given target state--was engaged in \cite{tamekue2024mathematical}. The latter work has provided new insights into the ability of a constant-in-time input to steer the solution of \eqref{eq:NF-intro} from an initial state to a desired target state over a small time horizon. While \cite{tamekue2024mathematical} proposes synthesizing such inputs, the implicit dependence on an abstract operator, denoted $\cO(\tau)$, poses challenges for practical application, particularly as $\cO(\tau)$ depends on the state $a$ and the input $I$ even for a small time horizon.

From a neural control standpoint, a feedforward synthesis of a constant-in-time input that approximately solves the control objective is valuable, especially in modeling visual illusions, where transient dynamics may play a critical role. This is particularly relevant for understanding how specific sensory stimuli, modeled as constant-in-time inputs, influence cortical dynamics in transient regimes. Noninvasive neurostimulation techniques, such as transcranial Direct Current Stimulation (tDCS), can also benefit from constant-time input synthesis for small time horizons. In tDCS, weak constant electrical currents are applied to modulate brain activity \cite{bergmann2009acute}, and modeling these effects can help predict how sustained stimulation influences neural dynamics over short time intervals \cite[Section~11.10]{schiff2011neural}. Such an approach may be relevant for emerging interests in translational neuroscience, such as the use of tDCS for memory enhancement \cite{grover2022long} and other therapeutic applications.

The open-loop control synthesis with constant-in-time inputs can also be used to mechanistically steer the neural field equation~\eqref{eq:NF-intro} toward generating a desired stationary pattern over a short time horizon---even patterns that would not spontaneously emerge under the given parameter regime, such as when $\mu = \mu_c$, $\mu < \mu_c$, or $\mu > \mu_c$ (see~\eqref{eq:mu criticals} for the definition of the critical parameter $\mu_c$). In his seminal work~\cite{amari1977dynamics}, Amari analyzed conditions under which spatial patterns of neural activity---such as homogeneous active states, localized bumps, traveling waves, or stripes---can arise in the absence of external input, driven purely by spontaneous symmetry-breaking due to noise or initial heterogeneity.

It is well established that for sufficiently small values of the gain parameter $\mu$, specifically when $\mu < \mu_c$, only the trivial or homogeneous active state exists and is globally exponentially stable. However, when $\mu$ exceeds the critical threshold $\mu_c$, localized bump solutions can emerge through a reversible Hopf bifurcation with $1{:}1$ resonance~\cite{faye2013localized}; see also~\cite{coombes2005waves, curtu2004pattern}. As a result, in the absence of external input and starting from a homogeneous initial state $a_0$, the solution to~\eqref{eq:NF-intro} remains homogeneous for all $T > 0$ if $\mu < \mu_c$. Nevertheless, our numerical illustrations (in both one- and two-dimensional spatial domains) show that the proposed generic constant-in-time input syntheses can robustly drive the system from a homogeneous state to a localized bump state within an arbitrarily small time horizon, even in parameter regimes where such patterns would not arise spontaneously.
% Furthermore, the numerical simulations demonstrate the practical effectiveness but also the clear accuracy and efficiency of the generic syntheses compared to naive approaches based on linearization at the initial or final states, which fail to achieve comparable precision in the nonlinear setting when both the initial and target states are not equilibria of the drift dynamics.

The remainder of the article is organized as follows.
	Section~\ref{not:general} introduces the general notations used throughout the article, together with the assumptions considered in the parameters in~\eqref{eq:NF-intro}.
	In Section~\ref{ss:Banach fixed-point theorem}, we begin by directly integrating~\eqref{eq:NF-intro} to derive an implicit control synthesis based on the Banach fixed-point theorem, and we highlight its limitations even in the linear setting.
	To overcome these drawbacks, Section~\ref{ss:generic synthesis} presents the generic synthesis results of the article. Section~\ref{sss:model-formulation} reformulates~\eqref{eq:NF-intro} in an abstract form, setting the stage for a more general control framework. Section~\ref{ss:representation of solutions} addresses the well-posedness of the dynamics by giving a solution representation that underpins the proposed synthesis, while Section~\ref{s:Generic syntheses in practice} discusses the practical feasibility of the proposed generic synthesis inputs. Section~\ref{ss:comments and open questions} provides remarks and outlines directions for further investigation. 	In Section~\ref{s:application in predicting visual illusions}, we discuss how the developed constant-in-time input syntheses can be leveraged to predict or induce visual illusions in a mechanistic modeling context.
	Numerical illustrations supporting the effectiveness of the proposed control inputs are reported in Section~\ref{s:NI}.
	The main proofs are collected in Section~\ref{s:proofs of main results}, and technical auxiliary results are deferred to Appendix~\ref{s:complement results}.

\begin{notation}\label{not:general}
    
% This paper adopts some of the notations used in \cite{tamekue2024mathematical} that we will recall for the reader's convenience. 
Unless otherwise stated, $p$ will denote a real number satisfying $1\le p\le\infty$, and $q$ will denote the conjugate to $p$ given by $1/p+1/q = 1$. We adopt the convention that the conjugate of $p=1$ is $q = \infty$ and vice-versa. For $d\in\N^{*}$, we denote by $L^p(\R^d)$ the Lebesgue space of class of real-valued measurable functions $u$ on $\R^d$ such that $|u|$ is integrable over $\R^d$ if $p<\infty$, and $|u|$ is essentially bounded over $\R^d$ when $p=\infty$. We endow these spaces with their standard norms
$$\|u\|_p^p = \int_{\R^d}|u(x)|^p\,dx\quad\mbox{and}\quad \|u\|_\infty = \operatorname{ess}\sup_{x\in\R^d}|u(x)|.$$

We use $\mathscr{L}(X, Y)$ to denote the space of linear bounded operators between two normed vector spaces $X$ and $Y$, and simply $\mathscr{L}(X)$ when $X=Y$. If $A\in \mathscr{L}(X)$, then $e^A\in \mathscr{L}(X)$ and $\sigma(A)$ are, respectively, the exponential operator and the spectrum of the operator $A$ defined by
\begin{equation}\label{eq:exponential operator}
    e^A = \sum_{n=0}^\infty\frac{A^n}{n!} \qquad\text{and}\qquad\sigma(A)=\C\bs\rho(A)
\end{equation}
  where $\rho(A):=\{\lambda \in\C\mid A-\lambda \idty\in\mathscr{L}(X)\;\mbox{is one to one and onto}\}$.
  
 For $x\in\R^d$, we denote by $|x|$ its Euclidean norm, and the scalar product with $\xi\in\R^d$ is defined by $\langle x,\xi\rangle=x_1\xi_1+x_2\xi_2+\cdots +x_d\xi_d$. Let $\cS(\R^d)$ be the Schwartz space of rapidly-decreasing $C^\infty(\R^d)$ functions.  The Fourier transform of $u\in \cS(\R^d)$ is defined by
\begin{equation}\label{eq::Fourier transform in S}
	\widehat{u}(\xi):= \cF\{u\}(\xi)=\int_{\R^d} u(x)e^{-2\pi i\langle x,\xi\rangle}\,dx\qquad\qquad\forall\xi\in\R^d.
\end{equation}
The spatial convolution of two functions $u\in L^1(\R^d)$ and $v\in L^p(\R^d)$, $1\le p\le\infty$ is defined by
\begin{equation}\label{eq::spatial convolution}
	(u\ast v)(x) = \int_{\R^d}u(x-y)v(y)\,dy\qquad\qquad\forall x\in\R^d.
\end{equation}
\end{notation}

Finally, we recall that for every $w\in L^p(\R^d)$, $1\le p<\infty$, its dual $w^{*}:=w|w|^{p-2}\|w\|_p^{2-p}\in L^q(\R^d)$. Moreover, for $1<p<\infty$, the mapping $\varphi: L^p(\R^d)\to\R$, $w \mapsto \varphi(w) = \|w\|_p^2/2$ is differentiable and $D\varphi(w)\phi = \langle w^{*},\phi\rangle_{L^q, L^p}$, for all $w, \phi\in L^p(\R^d)$ where
\begin{equation}
    \langle w^{*},\phi\rangle_{L^q, L^p}:=\int_{\R^d}w^{*}(x)\phi(x)\,dx.
\end{equation}
For $p=2$, it coincides with the usual scalar product in $L^2(\R^d)$ that we denote as $\langle w^{*},\phi\rangle_{L^2}=\langle w,\phi\rangle_{L^2}$.

\begin{assumption}\label{ass:general assumption}

Unless otherwise stated, we assume that the transfer function $f$ belongs to the class $C^2(\R)$, is non-decreasing, $f'$, and $f''$ are bounded so that $f$ and $f'$ are globally Lipschitz on $\R$. For clarity in the presentation, we assume that $f(0) = 0$ and $\|f'\|_\infty= 1$. The latter are without loss of generality since we can replace\footnote{This change of functions yields \eqref{eq:NF-intro} with kernel $\tilde{\omega}=\omega/\lambda$, and input $\tilde{I}=-\mu f(0)\widehat{\omega}(0)+I/\lambda$.} $f$ by $\widetilde{f}(s) = f(\lambda s)-f(0)$ with $\lambda:= 1/\|f'\|_\infty$.

    We also assume\footnote{If one works in $L^p(\R^d)$ for a particular $1\le p\le\infty$, it is sufficient that $\omega\in L^1(\R^d)\cap L^p(\R^d)\cap L^q(\R^d)$. Indeed, $\omega\in L^1(\R^d)\cap L^p(\R^d)$ as highlighted in \cite[Remark~4.5]{tamekue2024mathematical}, and $\omega\in L^q(\R^d)$ as it is required for some of our main results in this article.} that the connectivity kernel $\omega\in \cS(\R^d)$, and we introduce the parameters
    \begin{equation}\label{eq:mu criticals}
        \mu_0:=\frac{\alpha}{\|\omega\|_1}\qquad\qquad%\alpha/\|\omega\|_1,,\qquad
        \mu_1:=\frac{\alpha}{\|\widehat{\omega}\|_\infty},\qquad \mu_c:=\frac{\alpha}{\|\widehat{\omega}\|_\infty f'(0)}.
    \end{equation}
    that we will refer to when needed. Then, $\mu_0\le\mu_1\le\mu_c$, the parameter $\mu_0$ ($\mu_1$ if $p=2$) appears as the natural largest value of $\mu$ up to which \eqref{eq:NF-intro} admits a unique stationary state which is globally exponentially stable in the space $L^p(\R^d)$ (see, for instance, \cite{tamekue2022reproducing}), and when $d=2$, assuming $\mu<\mu_0$ ($\mu<\mu_1$ if $p=2$) enables the modeling of certain visual illusions \cite{bolelli2025neural,tamekue2023cortical,tamekue2024mathematical,tamekue2025reproducibility} using \eqref{eq:NF-intro}. Whereas the parameter $\mu_c$ is referred to as the bifurcation point of the homogeneous state (which corresponds to $a=0$ since it is assumed that $f(0)=0$) and corresponds to the value of $\mu$ where \eqref{eq:NF-intro} generates certain geometric hallucinatory patterns \cite{bressloff2001geometric,ermentrout1979mathematical} that spontaneously emerge in the primary visual cortex (V1 henceforth) or visual illusions \cite{nicks2021understanding} when a state-dependent sensory input drives V1's activity. See also \cite{veltz2010local} for the analysis of persistent states that emerge at this point.
\end{assumption}

\begin{remark}
    We do not explicitly require $f$ to be bounded. The assumption that $f$ is non-decreasing is not required for most of the results presented in this article. For instance, assuming $f'(0)>0$ yields $\mu_c>0$.
\end{remark}

\section{Main Results}\label{s:settings and main results}

This section presents the main theoretical contributions of the article. Section~\ref{ss:Banach fixed-point theorem} introduces a constructive control synthesis based on the Banach fixed-point theorem. Relying on standard assumptions for the transfer function and connectivity kernel, this method yields an iterative scheme for synthesizing constant-in-time inputs that steer the system from a given initial state to a prescribed target.

Although effective under appropriate conditions, this approach---illustrated through the linear case---reveals intrinsic limitations: it fails to fully characterize the controllability properties for constant inputs, especially in critical or supercritical regimes of the parameter~$\mu$. These shortcomings motivate the development of a more general and robust synthesis framework in Section~\ref{ss:generic synthesis}.

Before defining the notion of controllability we consider in this article, let us recall the following definition. See, for instance, \cite[Remark~4.2.3]{tucsnak2009observation}.
\begin{definition}\label{def:step function}
   A step function on $[0, T]$ with values in $L^p(\R^d)$ is a piecewise constant in time function defined on $[0, T]$. Constant in time functions $I\in L^p(\R^d)$ on $[0, T]$ are natural examples of such functions.
\end{definition}

\begin{definition}[Step controllability]\label{def:exact control notion}
Let $1\le p\le\infty$ and $T>0$. We say that \eqref{eq:NF-intro} is step controllable over the time interval $[0, T]$ if, for all $(a_0, a_1)\in L^p(\R^d)^2$, there exists a step function $I$ on $[0, T]$ with value in $L^p(\R^d)$ such that the solution of \eqref{eq:NF-intro} with $a(0)=a_0$ satisfies $a(T) = a_1$.     
\end{definition}

\subsection{Synthesis based on the Banach fixed-point theorem}\label{ss:Banach fixed-point theorem}

In this section---and only here---we assume that the transfer function $f$ is non-decreasing, globally Lipschitz continuous, and differentiable at $0$.
% , and satisfies $f'(0) = \|f'\|_\infty$. 
We also assume only that the connectivity kernel $\omega \in L^1(\R^d)$. Under these assumptions, the parameters $\mu_0$, $\mu_1$, and $\mu_c$ remain well-defined as given in \eqref{eq:mu criticals}. 

Within this setting, we begin with the following well-posedness result, the proof of which can be found in \cite{veltz2010local} or by invoking classical textbooks such as \cite[Chapter~6]{pazy2012semigroups}. 

\begin{proposition}\label{pro:well-posedness}
     Let $p\in[1, \infty]$ and $(a_0, I)\in L^p(\R^d)\times L^\infty([0, \infty); L^p(\R^d))$. There exists a unique solution $a_I\in C^0([0, \infty); L^p(\R^d))$ to \eqref{eq:NF-intro}. Letting $a_I(t):=a_I(\cdot,t)$, one has the following representation
     \begin{equation}\label{eq:implicit sol representation}
    a_I(t) = e^{-\alpha t}a_0+\mu\int_0^te^{-\alpha(t-s)}\omega\ast f(a_I(s))\,ds+\int_{0}^{t}e^{-\alpha(t-s)}I(s)ds,\qquad\forall t\ge 0.
\end{equation}
Moreover, the following estimate holds
     \begin{equation}\label{eq:a when p=finite}
       \|a_I(t)\|_p\le e^{-\alpha t(1-\frac{\mu}{\mu_0})}\|a_0\|_p+t\max(1,e^{-\alpha t(1-\frac{\mu}{\mu_0})})\sup_{0\le\tau\le t}\|I(\tau)\|_p,\qquad\forall t\ge 0.
\end{equation}
\end{proposition}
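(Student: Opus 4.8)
The plan is to read \eqref{eq:NF-intro} as the semilinear Cauchy problem $\dot a(t)=-\alpha a(t)+F(a(t))+I(t)$ on $L^p(\R^d)$, with $F(u):=\mu\,\omega\ast f(u)$ and linear part $-\alpha\idty$ generating the uniformly continuous semigroup $t\mapsto e^{-\alpha t}\idty$. Since $f(0)=0$ and $f$ is $1$-Lipschitz after the normalization of Assumption~\ref{ass:general assumption} (if $f(0)\neq 0$ one absorbs the constant $\mu f(0)\widehat\omega(0)$ into $I$, as in the footnote there), Young's convolution inequality gives, for every $1\le p\le\infty$, $\|F(u)-F(v)\|_p\le\mu\|\omega\|_1\|u-v\|_p$ and $\|F(u)\|_p\le\beta\|u\|_p$ with $\beta:=\mu\|\omega\|_1=\alpha\mu/\mu_0$; in particular $F\colon L^p(\R^d)\to L^p(\R^d)$ is globally Lipschitz.

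\textbf{Existence, uniqueness, and the Duhamel formula.} A continuous $L^p$-valued curve solves \eqref{eq:NF-intro} in the mild sense exactly when it satisfies \eqref{eq:implicit sol representation}, so it suffices to exhibit a unique fixed point of $\Gamma u(t):=e^{-\alpha t}a_0+\int_0^t e^{-\alpha(t-s)}\big(F(u(s))+I(s)\big)\,ds$. First I would verify that $\Gamma$ maps $C^0([0,T];L^p(\R^d))$ into itself---the map $t\mapsto\int_0^t e^{-\alpha(t-s)}(F(u(s))+I(s))\,ds$ is a continuous $L^p$-valued function because $F$ is Lipschitz, $u$ is continuous, and $I\in L^\infty$---and that $\sup_{[0,T]}\|\Gamma u-\Gamma v\|_p\le\beta\,\tfrac{1-e^{-\alpha T}}{\alpha}\,\sup_{[0,T]}\|u-v\|_p\le\beta T\,\sup_{[0,T]}\|u-v\|_p$, a contraction once $\beta T<1$. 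Since the admissible $T$ depends only on $\beta$, I would extend the local solution to $[0,\infty)$ by concatenating solutions on consecutive intervals (equivalently, run the fixed point once on $C^0([0,\infty);L^p(\R^d))$ with a suitably weighted sup-norm), and read off uniqueness from the same contraction estimate. This is the classical semilinear well-posedness theory, and I would simply cite \cite[Chapter~6]{pazy2012semigroups} or \cite{veltz2010local}.

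\textbf{The a priori estimate.} Taking $L^p$-norms in \eqref{eq:implicit sol representation} and using $\|F(a_I(s))\|_p\le\beta\|a_I(s)\|_p$, the scalar function $g(t):=\|a_I(t)\|_p$ satisfies
\[
 g(t)\le e^{-\alpha t}\|a_0\|_p+\beta\int_0^t e^{-\alpha(t-s)}g(s)\,ds+\int_0^t e^{-\alpha(t-s)}\|I(s)\|_p\,ds .
\]
With $\gamma:=\alpha-\beta=\alpha\big(1-\mu/\mu_0\big)$, I would multiply by $e^{\alpha t}$, apply the general Grönwall inequality to $t\mapsto e^{\alpha t}g(t)$, and then exchange the order of integration (Fubini); the two forcing contributions collapse into the sharp comparison bound
\[
 g(t)\le e^{-\gamma t}\|a_0\|_p+\int_0^t e^{-\gamma(t-s)}\|I(s)\|_p\,ds,\qquad t\ge 0 ,
\]
i.e. $g$ is dominated by the solution of $\dot\phi=-\gamma\phi+\|I(t)\|_p$ with $\phi(0)=\|a_0\|_p$ (the same bound can be obtained by integrating the differential inequality $\tfrac{d}{dt}\|a_I(t)\|_p\le-\gamma\|a_I(t)\|_p+\|I(t)\|_p$ produced by the duality map $w\mapsto w^{*}$ recalled just before Assumption~\ref{ass:general assumption}). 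Finally, bounding $\int_0^t e^{-\gamma(t-s)}\|I(s)\|_p\,ds\le\big(\sup_{0\le\tau\le t}\|I(\tau)\|_p\big)\int_0^t e^{-\gamma r}\,dr$ and using the elementary inequalities $1-e^{-x}\le x$ and $e^{x}-1\le x e^{x}$ for $x\ge 0$, one checks case by case ($\gamma>0$, $\gamma=0$, $\gamma<0$) that $\int_0^t e^{-\gamma r}\,dr\le t\max(1,e^{-\gamma t})$; the two displays then combine to give \eqref{eq:a when p=finite}.

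\textbf{Expected main obstacle.} The only genuinely delicate point is the sharpness of the constant in \eqref{eq:a when p=finite}. A crude application of Grönwall's inequality to the integral inequality for $g$ yields only $g(t)\le e^{-\gamma t}\|a_0\|_p+t\,e^{\beta t}\sup_{[0,t]}\|I\|_p$, in which the forcing term grows at rate $\beta$ instead of $\max(0,\beta-\alpha)$; recovering the factor $\max(1,e^{-\gamma t})$ is exactly what the Fubini refinement (or the differential-inequality route) buys, after which one must handle the three sign regimes of $\gamma=\alpha(1-\mu/\mu_0)$ uniformly---which the single $\max$ formula encodes. Everything else (the contraction estimates, Young's inequality, the scalar bounds) is routine.
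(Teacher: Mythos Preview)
Your proposal is correct. The paper does not give its own proof of this proposition: it simply states that ``the proof of which can be found in \cite{veltz2010local} or by invoking classical textbooks such as \cite[Chapter~6]{pazy2012semigroups}'', which are exactly the references you invoke, and your contraction/Gr\"onwall argument is the standard route those references supply; your derivation of the estimate~\eqref{eq:a when p=finite} via the sharp comparison bound $g(t)\le e^{-\gamma t}\|a_0\|_p+\int_0^t e^{-\gamma(t-s)}\|I(s)\|_p\,ds$ and the elementary inequality $\int_0^t e^{-\gamma r}\,dr\le t\max(1,e^{-\gamma t})$ is clean and complete.
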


As is common in control synthesis approaches, determining a constant-in-time input that steers the system~\eqref{eq:NF-intro} from an initial state~$a_0$ to a target state~$a_1 \in L^p(\mathbb{R}^d)$ over a finite time horizon~$T > 0$ amounts to solving, for~$I \in L^p(\mathbb{R}^d)$, the terminal condition $a_I(T) = a_1$, where $a_I(\cdot)$ denotes the solution of~\eqref{eq:NF-intro} associated with the input~$I$. Substituting into the implicit solution representation~\eqref{eq:implicit sol representation}, the requirement $a_I(T) = a_1$ yields the condition that~$I \in L^p(\mathbb{R}^d)$ must satisfy
\begin{equation}\label{eq:I from BFT}
    I = \alpha(1 - e^{-\alpha T})^{-1} \left\{ a_1 - e^{-\alpha T} a_0 - \mu \int_0^T e^{-\alpha(T - t)} \, \omega \ast f(a_I(t)) \, dt \right\}.
\end{equation}

Equation~\eqref{eq:I from BFT} is implicit w.r.t. the unknown~$I \in L^p(\mathbb{R}^d)$, and the key challenge lies in identifying conditions under which it admits at least one solution that is numerically implementable. This motivates the main result of this section, which relies on the Banach fixed-point theorem. The proof is outlined in Section~\ref{ss:proof of the Banach fixed-point theorem}.

\begin{theorem}\label{thm:Banach fixed-point theorem}
    Let $p\in[1, \infty]$ and $(a_0, a_1)\in L^p(\R^d)^2$. Define the sequence $(I_n)_n\subset L^p(\R^d)$ by
    \begin{equation}
        \begin{cases}
            I_0\in L^p(\R^d)\\
            I_{n+1} = \displaystyle\alpha(1-e^{-\alpha t})^{-1}\left\{a_1-e^{-\alpha T}a_0-\mu\int_0^Te^{-\alpha(T-t)}\omega\ast f(a_{I_n}(t))\,dt\right\},\qquad n\ge 0\\
            a_{I_n}\in L^p(\R^d)\;\text{solution of}\quad \dot{a}(t)  = -\alpha a(t) + \mu\omega\ast f(a(t)) + I_n,\quad a(0) = a_0.
        \end{cases}
    \end{equation}
    Assume that one of the following conditions holds 
    \begin{enumerate}
    \item $T>0$ and $\mu<\mu_0/2$;
        \item $T<T_*(\frac{\mu}{\mu_0})$ and $\mu\ge\mu_0$ where $T_*(\frac{\mu}{\mu_0})>0$ is the unique solution of $L(T)=1$ with
        \begin{equation}\label{eq:lipschitz constant of Phi}
            L(T):=\begin{cases}\displaystyle\frac{\alpha T}{1-e^{-\alpha T}}-1
            \quad&\text{if}\quad \mu=\mu_0\\
            \displaystyle\frac{1}{r-1}\left(\frac{e^{\alpha rT}-1}{e^{\alpha T}-1}-r\right)\quad&\text{if}\quad r:=\mu/\mu_0\neq 1.
            \end{cases}
        \end{equation}
    \end{enumerate}
    Then, $(I_n)_n$ converges in $L^p(\R^d)$ to $I\in L^p(\R^d)$ given by \eqref{eq:I from BFT} and the corresponding solution $a_I(\cdot)$ to \eqref{eq:NF-intro} satisfies $a_I(T)=a_1$. 
    
\end{theorem}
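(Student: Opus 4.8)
The plan is to recognize \eqref{eq:I from BFT} as a fixed-point equation $I = \Phi(I)$ on $L^p(\R^d)$, where $\Phi(I)$ is the right-hand side of \eqref{eq:I from BFT} with $a_I(\cdot)$ the (unique, by Proposition~\ref{pro:well-posedness}) solution of \eqref{eq:NF-intro} driven by the constant input $I$. The map $\Phi$ is exactly the one generating the iteration $I_{n+1} = \Phi(I_n)$ in the statement, so by the Banach fixed-point theorem it suffices to show that $\Phi$ is a contraction on the complete metric space $L^p(\R^d)$; the limit $I$ then satisfies $I = \Phi(I)$, which is precisely \eqref{eq:I from BFT}, and unwinding that identity through \eqref{eq:implicit sol representation} gives $a_I(T) = a_1$.

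First I would estimate $\|\Phi(I) - \Phi(J)\|_p$. Since the prefactor $\alpha(1-e^{-\alpha T})^{-1}$ and the initial-data term $a_1 - e^{-\alpha T}a_0$ are independent of the input, the difference is
\[
\Phi(I) - \Phi(J) = -\mu\,\alpha(1-e^{-\alpha T})^{-1}\int_0^T e^{-\alpha(T-t)}\,\omega\ast\bigl(f(a_I(t)) - f(a_J(t))\bigr)\,dt.
\]
Using Young's convolution inequality $\|\omega\ast g\|_p \le \|\omega\|_1\|g\|_p$ together with the global $1$-Lipschitz bound on $f$ (recall $\|f'\|_\infty = 1$), one gets $\|\omega\ast(f(a_I(t)) - f(a_J(t)))\|_p \le \|\omega\|_1\,\|a_I(t) - a_J(t)\|_p$. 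So the whole problem reduces to controlling $\|a_I(t) - a_J(t)\|_p$ in terms of $\|I - J\|_p$ uniformly on $[0,T]$. Subtracting the two instances of the integral equation \eqref{eq:implicit sol representation}, applying Young and Lipschitz again inside, and writing $\delta(t) := \|a_I(t) - a_J(t)\|_p$, one obtains the Grönwall-type inequality
\[
\delta(t) \le \mu\|\omega\|_1\int_0^t e^{-\alpha(t-s)}\delta(s)\,ds + \frac{1 - e^{-\alpha t}}{\alpha}\|I - J\|_p,
\]
and since $\mu\|\omega\|_1 = \alpha\mu/\mu_0$ by the definition of $\mu_0$ in \eqref{eq:mu criticals}, Grönwall's lemma yields $\delta(t) \le c(t)\|I-J\|_p$ with an explicit $c(t)$ built from $e^{-\alpha t}$, $e^{\alpha(\mu/\mu_0 - 1)t}$ (or, when $\mu = \mu_0$, a term linear in $t$ from the resonant case). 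Plugging this back, the Lipschitz constant of $\Phi$ comes out to be exactly the quantity $L(T)$ displayed in \eqref{eq:lipschitz constant of Phi}, after the elementary integration $\int_0^T e^{-\alpha(T-t)}c(t)\,dt$ against the prefactor $\alpha(1-e^{-\alpha T})^{-1}$; I would carry out this integral separately in the cases $r = \mu/\mu_0 = 1$ and $r \ne 1$.

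It then remains to check that each of the two hypotheses forces $L(T) < 1$. For (i), when $\mu < \mu_0/2$ one shows $L(T) < 1$ for every $T>0$; the cleanest route is to verify that $T\mapsto L(T)$ is increasing and $L(T)\to \mu/(\mu_0 - \mu) < 1$ as $T\to\infty$ (and $L(0^+) = 0$), using the series expansion of $(e^{\alpha rT} - 1)/(e^{\alpha T}-1)$ for the monotonicity. For (ii), when $\mu \ge \mu_0$ one checks that $L$ is continuous and strictly increasing on $(0,\infty)$ with $L(0^+) = 0$ and $L(T)\to\infty$ (both in the $r=1$ case, where the limit is clear from $\alpha T/(1-e^{-\alpha T})$, and in the $r>1$ case), so there is a unique $T_*(\mu/\mu_0)$ with $L(T_*) = 1$, and $T < T_*$ gives $L(T) < 1$. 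The main obstacle is the bookkeeping in the $r \ne 1$ integration and the monotonicity/limit analysis of $L$; these are elementary but require care, especially handling the resonant case $\mu = \mu_0$ as a limit and confirming the two formulas in \eqref{eq:lipschitz constant of Phi} agree as $r\to 1$. Everything else is a routine application of Young's inequality, the Lipschitz bound on $f$, Grönwall, and the contraction mapping principle.
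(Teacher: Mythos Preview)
Your proposal is correct and follows essentially the same approach as the paper's proof: define the fixed-point map $\Phi$, bound $\|\Phi(I)-\Phi(J)\|_p$ via Young's inequality and the Lipschitz property of $f$, control $\|a_I(t)-a_J(t)\|_p$ by Gr\"onwall from the Duhamel representation~\eqref{eq:implicit sol representation}, and perform the explicit integration to recover $L(T)$ in~\eqref{eq:lipschitz constant of Phi}. The only difference is that you sketch the monotonicity and limit analysis of $L(T)$ in more detail than the paper, which simply asserts that ``an immediate real analysis'' yields $L(T)<1$ under the stated conditions.
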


\begin{remark}
    In Proposition~\ref{pro:well-posedness} and Theorem~\ref{thm:Banach fixed-point theorem}, one can replace $\mu_0$ by $\mu_1$ in the case of $p=2$ to get sharp results. This is justified via a Fourier transform argument using Parseval's identity and Plancherel's theorem. See, for instance, the proof of Lemma~\ref{lem:norm of exp A_t(U_t(a0))}.
\end{remark}

\begin{remark}
Theorem~\ref{thm:Banach fixed-point theorem} provides an implementable iterative synthesis of a constant in time input that steers \eqref{eq:NF-intro} from a given initial state $a_0 \in L^p(\R^d)$ to any target state $a_1 \in L^p(\R^d)$ over a finite time horizon $ T > 0 $. However, the result comes with the following limitations
\begin{enumerate}
    % \item It does not cover the \emph{critical regime} $\mu = \mu_0$;
    \item It covers the \emph{subcritical regime} $\mu < \mu_0$ for all $T>0$ if $\mu < \mu_0/2$;
    \item It covers the \textit{critical} and the \emph{supercritical regime} $\mu \ge \mu_0 $ if $T < T_*(\mu/\mu_0)$, where $T_*(\cdot) > 0$ decreases as $\mu/\mu_0 $ increases.
\end{enumerate}

We can also highlight the limitations of Theorem~\ref{thm:Banach fixed-point theorem} by comparing it with the linear case, as stated in the following proposition. The proof is straightforward and can be done by mimicking~\cite[Proposition III.2]{tamekue2024control}. %in Section~\ref{ss:proof of the linear case}.
\end{remark}
\begin{proposition}\label{pro:linear case}
    Let $p\in[1, \infty]$, $T>0$ and $a_0\in L^p(\R^d)$. Consider the linear dynamics
    \begin{equation}\label{eq:linear dynamics}
        \dot{a}(t)=Aa(t)+I,\quad a(0)=a_0,\qquad t\in [0,T]
    \end{equation}
    where $Au = -\alpha u+\mu\omega\ast u$, $u\in L^p(\R^d)$. Assume that the following spectral condition holds
        \begin{equation}\label{eq:sc linear}
            \sigma(A)\cap\left\{i\frac{2 \pi \ell}{T} \in \mathbb{C} \mid\ell\in\Z\right\}=\emptyset.
        \end{equation}
        Then, the solution $a_I\in C^\infty([0, T]; L^p(\R^d))$ of \eqref{eq:linear dynamics} satisfies $a_I(T)=a_1\in L^p(\R^d)$, if and only if, the constant in time input $I\in L^p(\R^d)$ is given by
        \begin{equation}\label{eq:input linear}
            I = (e^{TA}-\operatorname{Id})^{-1}A(a_1-e^{TA}a_0).
        \end{equation}
\end{proposition}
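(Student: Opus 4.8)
The plan is to treat \eqref{eq:linear dynamics} as a linear ODE on the Banach space $L^p(\R^d)$, whose drift $A = -\alpha\idty + \mu\,\omega\ast(\cdot)$ is a bounded operator (since $\omega\in L^1(\R^d)$, convolution by $\omega$ is bounded on $L^p(\R^d)$ by Young's inequality). Consequently $A$ generates the uniformly continuous semigroup $t\mapsto e^{tA}$, and the unique mild (in fact classical, $C^\infty$ in $t$) solution of \eqref{eq:linear dynamics} is given by the variation-of-constants formula
\begin{equation}\label{eq:voc-linear}
    a_I(t) = e^{tA}a_0 + \int_0^t e^{(t-s)A}\,I\,ds = e^{tA}a_0 + \left(\int_0^t e^{sA}\,ds\right)I,
\end{equation}
where in the last step I used that $I$ is constant in time and changed variables $s\mapsto t-s$.

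The second step is to evaluate \eqref{eq:voc-linear} at $t=T$ and simplify the integral $\int_0^T e^{sA}\,ds$. Formally this is $A^{-1}(e^{TA}-\idty)$, but $A$ need not be invertible; instead I would argue the other way around. Note the operator identity $A\int_0^T e^{sA}\,ds = \int_0^T A e^{sA}\,ds = \int_0^T \frac{d}{ds}e^{sA}\,ds = e^{TA}-\idty$, and likewise $\left(\int_0^T e^{sA}\,ds\right)A = e^{TA}-\idty$, since $A$ commutes with $e^{sA}$. Hence $a_I(T) = e^{TA}a_0 + \left(\int_0^T e^{sA}\,ds\right)I = a_1$ is equivalent to $\left(\int_0^T e^{sA}\,ds\right)I = a_1 - e^{TA}a_0$.

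The third step is where the spectral hypothesis \eqref{eq:sc linear} enters: I must show that $e^{TA}-\idty$ is invertible in $\mathscr{L}(L^p(\R^d))$, which forces $\int_0^T e^{sA}\,ds$ to be invertible as well (from the two commuting identities above, $\int_0^T e^{sA}\,ds$ is a two-sided factor of the invertible operator $e^{TA}-\idty$ through $A$; more cleanly, once $e^{TA}-\idty$ is invertible one checks directly that $B:=(e^{TA}-\idty)^{-1}A = A(e^{TA}-\idty)^{-1}$ is a bounded right- and left-inverse of $\int_0^T e^{sA}\,ds$ using $A\int_0^T e^{sA}\,ds = \int_0^T e^{sA}\,ds\, A = e^{TA}-\idty$). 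Invertibility of $e^{TA}-\idty$ follows from the spectral mapping theorem for the bounded operator $A$: $\sigma(e^{TA}) = e^{T\sigma(A)}$, so $1\in\sigma(e^{TA})$ iff there exists $\lambda\in\sigma(A)$ with $e^{T\lambda}=1$, i.e. $T\lambda \in 2\pi i\Z$, i.e. $\lambda\in\{2\pi i\ell/T : \ell\in\Z\}$ — which is precisely excluded by \eqref{eq:sc linear}. Therefore $1\notin\sigma(e^{TA})$, $e^{TA}-\idty$ is boundedly invertible, and solving for $I$ yields $I = (e^{TA}-\idty)^{-1}A(a_1-e^{TA}a_0)$, which is \eqref{eq:input linear}. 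Conversely, plugging this $I$ back into \eqref{eq:voc-linear} at $t=T$ and using $\left(\int_0^T e^{sA}\,ds\right)(e^{TA}-\idty)^{-1}A = (e^{TA}-\idty)^{-1}A\int_0^T e^{sA}\,ds = (e^{TA}-\idty)^{-1}(e^{TA}-\idty) = \idty$ gives $a_I(T) = e^{TA}a_0 + (a_1 - e^{TA}a_0) = a_1$, establishing the "if" direction and hence the equivalence.

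The main (and essentially only) obstacle is the justification of the spectral mapping theorem $\sigma(e^{TA})=e^{T\sigma(A)}$; this is standard for bounded operators (it follows from the holomorphic functional calculus), so the proof really is straightforward as claimed, and the bulk of the write-up is just carefully recording the commutation identities that let one pass between $\int_0^T e^{sA}\,ds$ and $(e^{TA}-\idty)A^{-1}$ without ever inverting $A$ itself.
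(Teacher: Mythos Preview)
Your proof is correct and is precisely the standard argument the paper has in mind; the paper omits the proof entirely, declaring it ``straightforward,'' and your variation-of-constants computation together with the spectral mapping theorem for bounded operators is exactly the intended route.
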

\begin{lemma}\label{lem:on the SA in the linear case}
    The spectral condition in~\eqref{eq:sc linear} is automatically satisfied if $\mu < \mu_0$, or, in the specific case where $p = 2$, if $\mu < \mu_1$. In particular, under these conditions, $I \in L^p(\mathbb{R}^d)$ in \eqref{eq:input linear} is well-defined.
\end{lemma}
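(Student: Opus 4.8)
The plan is to compute the spectrum of the operator $A = -\alpha\,\idty + \mu\,\omega\ast(\cdot)$ on $L^p(\R^d)$ and verify that it stays away from the imaginary axis (except possibly at the origin, which is never of the form $i2\pi\ell/T$ with $\ell\neq 0$; but here we will in fact show the whole spectrum avoids the imaginary axis entirely, giving something slightly stronger). The key observation is that $A$ is a bounded perturbation of a multiple of the identity by the convolution operator $C u := \mu\,\omega\ast u$, so $\sigma(A) = -\alpha + \sigma(C)$. Hence it suffices to locate $\sigma(C)$ and show $\sigma(A)\subset\{\lambda\in\C : \Re\lambda < 0\}$ under $\mu<\mu_0$ (resp.\ $\mu<\mu_1$ when $p=2$), which certainly excludes every point $i2\pi\ell/T$.

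First I would bound the spectral radius of $C$ by its operator norm: since $\omega\in L^1(\R^d)$, Young's convolution inequality gives $\|Cu\|_p = \mu\|\omega\ast u\|_p \le \mu\|\omega\|_1\|u\|_p$, so $\|C\|_{\mathscr L(L^p)}\le \mu\|\omega\|_1$ and therefore the spectral radius satisfies $r(C)\le\mu\|\omega\|_1$. Consequently every $\lambda\in\sigma(A)$ satisfies $|\lambda+\alpha| = |\lambda-(-\alpha)|\le r(C)\le \mu\|\omega\|_1 < \alpha$ whenever $\mu < \mu_0 = \alpha/\|\omega\|_1$. This places $\sigma(A)$ inside the open disk of radius $\alpha$ centered at $-\alpha$, whose closure meets the imaginary axis only at the origin, so in particular $\sigma(A)\cap\{i2\pi\ell/T : \ell\in\Z\} = \emptyset$ (the point $\ell=0$ is excluded because $0$ lies on the boundary circle, not in the open disk — and even if one worried about $0$, note $0\notin\sigma(A)$ would follow from the strict inequality). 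For the sharper $L^2$ statement, I would instead diagonalize $C$ via the Fourier transform: by Plancherel, $C$ is unitarily equivalent to multiplication by $\mu\widehat\omega(\xi)$ on $L^2(\R^d)$, so $\sigma(C) = \overline{\{\mu\widehat\omega(\xi) : \xi\in\R^d\}}$, which is a subset of the real interval $[-\mu\|\widehat\omega\|_\infty,\ \mu\|\widehat\omega\|_\infty]$ since $\omega$ real and even forces $\widehat\omega$ real; hence $\sigma(A)\subset[-\alpha-\mu\|\widehat\omega\|_\infty,\ -\alpha+\mu\|\widehat\omega\|_\infty]$, and $\mu<\mu_1 = \alpha/\|\widehat\omega\|_\infty$ yields $\sigma(A)\subset(-\infty,0)$, again disjoint from the imaginary axis.

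Finally, once the spectral condition~\eqref{eq:sc linear} is confirmed, the last sentence of the lemma — that $I$ in~\eqref{eq:input linear} is well-defined — follows because the spectral mapping theorem gives $\sigma(e^{TA}) = e^{T\sigma(A)}$, and $1\notin e^{T\sigma(A)}$ precisely when $T\sigma(A)$ avoids $2\pi i\Z$, i.e.\ exactly condition~\eqref{eq:sc linear}; therefore $e^{TA}-\idty$ is boundedly invertible on $L^p(\R^d)$, and the expression $(e^{TA}-\idty)^{-1}A(a_1-e^{TA}a_0)$ is a well-defined element of $L^p(\R^d)$ (noting $A$ itself is bounded). I expect the only mild subtlety — the ``hard part,'' such as it is — to be the bookkeeping that distinguishes $p\neq 2$ (where only the crude norm bound $\mu\|\omega\|_1$ is available, forcing the hypothesis $\mu<\mu_0$) from $p=2$ (where the Fourier multiplier representation gives the exact spectrum and the weaker hypothesis $\mu<\mu_1$ suffices); everything else is a direct application of Young's inequality, Plancherel's theorem, and the spectral mapping theorem for bounded operators.
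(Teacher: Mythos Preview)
Your argument is correct, but the paper takes a different (and arguably shorter) route. Rather than locating $\sigma(A)$ directly, the paper bounds the semigroup norm: it shows, via the Gronwall-type estimate of Lemma~\ref{lem:norm of exp A_t(U_t(a0))}, that
\[
\|e^{tA}\|_{\mathscr{L}(L^p)} \le e^{-\alpha(1-\mu/\mu_0)t}
\quad\text{(and with $\mu_1$ replacing $\mu_0$ when $p=2$)},
\]
so $\|e^{TA}\|<1$ under the stated hypotheses; the invertibility of $e^{TA}-\idty$ then follows from the Neumann series, and the spectral condition from $r(e^{TA})<1$ together with spectral mapping. Your route --- spectral radius of the convolution operator via Young's inequality for general $p$, Fourier multiplier representation for $p=2$, then spectral mapping for the exponential --- is more explicitly spectral and yields the exact location $\sigma(A)\subset\{|\lambda+\alpha|<\alpha\}$ in one step, which is a nice geometric picture. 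The paper's approach, on the other hand, produces the semigroup decay estimate~\eqref{eq:norm of exp tA} as a by-product, and this estimate is reused throughout the paper (e.g.\ in Remark~\ref{rmk:on the SA in the nonlinear settings} and Lemma~\ref{lem:norm of exp A_t(U_t(a0))}), so it fits more economically into the overall architecture.

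One small caveat: in your $p=2$ step you assert that $\widehat\omega$ is real because $\omega$ is ``real and even.'' Evenness is not assumed in Section~\ref{ss:Banach fixed-point theorem} (only $\omega\in L^1(\R^d)$), and the symmetry hypothesis~\eqref{eq:symmetric assumption on omega} appears only later in Proposition~\ref{pro:on the spectrum of DN(a) in L^2}. Your conclusion is unaffected --- the multiplier spectrum still sits in the disk $\{|z|\le\mu\|\widehat\omega\|_\infty\}$, hence $\sigma(A)$ in the open left half-plane when $\mu<\mu_1$ --- but you should phrase it as a disk containment rather than an interval.
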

\begin{proof}
    Let $t \in \mathbb{R}_+$. One proves as in Lemma~\ref{lem:norm of exp A_t(U_t(a0))} that
    \begin{equation}\label{eq:norm of exp tA}
        \|e^{tA}\|_{\mathscr{L}(L^p(\R^d))}\begin{cases}
            \le e^{-\alpha\left(1-\frac{\mu}{\mu_0}\right)t}&\quad\mbox{if}\quad 1\le p\le\infty\\
            \le e^{-\alpha\left(1-\frac{\mu}{\mu_1}\right)t}&\quad\mbox{if}\quad p=2.
        \end{cases}
    \end{equation}
    It follows that $\|e^{tA}\|_{\mathscr{L}(L^p(\mathbb{R}^d))} < 1$ for every $t > 0$ if $\mu < \mu_0$ (or $\mu<\mu_1$ when $p=2$). The first part of the lemma follows, and the second part then follows from the Neumann series expansion.
\end{proof}

Proposition~\ref{pro:linear case} ensures that in the linear case (i.e., when $f(s) = s$), a constant-in-time input that steers \eqref{eq:NF-intro} from an initial state $a_0$ to a target state $a_1$ over a finite time horizon $T > 0$ can be constructed explicitly in a feedforward manner. Notably, this input is well-defined only under the spectral condition specified in~\eqref{eq:sc linear}, which is less restrictive than those required in Theorem~\ref{thm:Banach fixed-point theorem}. 
% Finally, we refer to Remark~\ref{rmk:on the spectrum of DN(a) in L^2} where it is justified why ~\eqref{eq:sc linear} are satisfied for $\mu\ge\mu_1$.

The objective of the next section is to properly generalize this result to the fully nonlinear setting.

\subsection{Generic syntheses: Proper generalization of the linear case}\label{ss:generic synthesis}

This section introduces generic synthesis formulations that extend the linear case in a principled manner. Section~\ref{sss:model-formulation} recasts the system~\eqref{eq:NF-intro} within an abstract functional framework tailored to control-theoretic analysis and synthesis. In contrast to the fixed-point approach of Section~\ref{ss:Banach fixed-point theorem}, which requires only mild assumptions on~$f$, this formulation imposes stronger smoothness conditions (see Assumptions~\ref{ass:general assumption}), which are crucial for exploiting the flow maps of the drift dynamics in constructing a generic synthesis theory.

\subsubsection{Abstract Formulation}\label{sss:model-formulation}

For our purpose of extending the linear controllability results presented in Proposition~\ref{pro:linear case} in the fully nonlinear case, we recast \eqref{eq:NF-intro} as follows
\begin{equation}\label{eq::nonlinear control flow notation}
    \dot{a}(t) = N(a(t))+I,\qquad a(0) = a_0
\end{equation}
where for any $1\le p\le\infty$, the nonlinear map $N$ is given by
\begin{equation}\label{eq::operator N}
	N(u) = -\alpha u+\mu\omega\ast f(u),\qquad u\in L^p(\R^d).
\end{equation}
Equation~\eqref{eq::nonlinear control flow notation} defines an ordinary differential equation in the Banach space $L^p(\R^d) $ associated with $N$. Recall from \cite[Lemma~3.2]{tamekue2024mathematical} that for every $1\le p\le\infty$, $N$ is (globally) Lipschitz continuous from $L^p(\R^d) $ to itself. Therefore, one can define a (nonlinear) flow of operators $\{U_t\mid t\in\R\}$ in $L^p(\R^d)$ generated by $N$ such that for any $a_0\in L^p(\R^d)$, $U_t(a_0)$ is the solution to \eqref{eq::nonlinear control flow notation} when $I\equiv0$, namely
\begin{equation}\label{eq::nonlinear flow}
	\frac{d}{d t} U_t(a_0) = N(U_t(a_0)),\qquad\qquad
	U_0(a_0) = a_0.
\end{equation}
Denote by $V_t$ the inverse of $U_t$ for $t\in\R$, i.e., $V_{t}(a_0)=U_{-t}(a_0)$ for all $a_0\in L^p(\R^d)$. Then, it holds
\begin{equation}\label{eq::nonlinear flow psi}
	\frac{d}{d t}V_t(a_0) = -N(V_t(a_0)),\qquad
	V_0(a_0) = a_0.
\end{equation}
By Assumptions~\ref{ass:general assumption}, it follows, for example, from\footnote{For a detailed discussion of the notions of Fréchet and Gâteaux derivatives, see, for example, \cite[Chapter~7]{ciarlet2013linear}.} \cite[Lemma~B.8]{tamekue2024mathematical} that for every $1<p\le\infty$, $N\in C^1(L^p(\R^d); L^p(\R^d))$ (Gâteaux differentiable when $p=1$) and the operator norm of the Fréchet derivative $DN(\psi)$ is uniformly bounded w.r.t. $\psi\in L^p(\R^d)$, viz.
\begin{equation}\label{eq:operator norm of DN}
    \|DN(\psi)\|_{\mathscr{L}(L^p(\R^d))}\begin{cases}
        \le \alpha+\mu\|\omega\|_1&\quad\mbox{if}\quad 1<p\le\infty\\
        \le \alpha+\mu\|\widehat{\omega}\|_\infty&\quad\mbox{if}\quad p=2
    \end{cases}\qquad\qquad\forall\psi\in L^p(\R^d).
\end{equation}
Here the differential $DN(\psi)\in\mathscr{L}(L^p(\R^d))$ at every $\psi\in L^p(\R^d)$ is given by (see, \cite[Lemma~B.8]{tamekue2024mathematical})
\begin{equation}\label{eq:derivative of the drift}
    DN(\psi)\phi=-\alpha\phi+\mu\omega\ast[f'(\psi)\phi]\qquad\qquad\forall\phi\in L^p(\R^d).
\end{equation}
In particular, for any fixed $t\in\R$, $U_t$ and $V_t$ are Fréchet differentiable for every $a_0\in L^p(\R^d)$. If we use $DU_t(a_0)$ and $DV_t(a_0)$ to denote, respectively, the evaluation of these Fréchet derivatives at any $a_0\in L^p(\R^d)$, then they solve, respectively, the following %{\color{blue!80!magenta}
\begin{equation}\label{eq:equation derivative of the flow}
    \begin{cases}
        \displaystyle\frac{d}{d t} DU_t(a_0)&=DN(U_t(a_0))DU_t(a_0)\\
        DU_0(a_0)&=\idty,
    \end{cases}\qquad\quad\begin{cases}
        \displaystyle\frac{d}{d t} DV_t(a_0)&=-DN(V_t(a_0))DV_t(a_0)\\
        DV_0(a_0)&=\idty.
    \end{cases}
\end{equation}
 % \begin{equation}
 %        \begin{cases}
 %            ,\qquad ,\\
 %            \cr
 %             ,\qquad DV_0(a_0)=\idty.
 %        \end{cases}
 %    \end{equation}
    %}
Moreover, it follows from\footnote{The result in \cite[Lemma~B.10]{tamekue2024mathematical} proved the invertibility of $DU_t(a_0)$ for every $t\ge 0$. However, this can be straightforwardly extended to $t\in\R$ since $U_t$ is well-defined for all $t\in\R$.} \cite[Lemma~B.10]{tamekue2024mathematical} that  $DU_t(a_0)$ is a well-defined invertible operator, and 
\begin{equation}\label{inverse of the differential of the flow}
    \left[DU_t(a_0)\right]^{-1} = DV_t(U_t(a_0)),\qquad t\in\R,\, a_0\in L^p(\R^d).
\end{equation}
% The first main result of this paper is as follows.

\subsubsection{Representations and series expansions of the solution}\label{ss:representation of solutions}

This section provides dual representations of the solution to~\eqref{eq:NF-intro} and its series expansions based on the abstract formulation of Section~\ref{sss:model-formulation}. These representations emerge from the chronological calculus–inspired framework introduced by Agrachev and Gamkrelidze in the late 1970s \cite{agrachev1979exponential} to represent the solutions of nonautonomous ordinary differential equations on finite-dimensional manifolds. 

To the best of the authors' knowledge, this representation framework was first used in~\cite{tamekue2024mathematical} in the context of~\eqref{eq:NF-intro} and later instantiated in finite-dimensional settings in~\cite{tamekue2024control} to enable synthesis in continuous-time Hopfield-type recurrent neural networks with underactuated constant inputs. %and in \cite{tamekue2025control} for control analysis and synthesis for general nonautonomous control-affine systems.
% In what follows, we adapt this methodology to spatially continuous (neural field) models \eqref{eq:NF-intro}. 

For completeness, we present these representations in the general case of time-varying inputs.
% although the focus in later paragraphs and sections will return to the constant-input setting.
We start with the first solution representation that can be viewed as a natural extension of the solution representation in the linear case (Duhamel’s formula~\cite[p.~105]{pazy2012semigroups}), particularly at the final time horizon $T$, with the nonlinear flow and its differential replacing the exponential operator. In particular, it can be termed as a backward representation of the solution to \eqref{eq::nonlinear control flow notation}. The proof of the following is given in Section~\ref{ss:proof of backward representation}.

\begin{theorem}\label{thm:backward representation} 
    Let $p\in(1,\infty]$ and $T>0$. For all $(a_0, I)\in L^p(\R^d)\times L^1((0, T); L^p(\R^d))$, the solution $a\in C^0([0, T]; L^p(\R^d))$ to~\eqref{eq::nonlinear control flow notation} can be expressed as
    \begin{equation}\label{eq:backward representation}
        a(t) = U_{t-T}\left(U_T(a_0)+\int_{0}^{t}DU_{T-s}(a(s)) I(s)\,ds\right),\qquad\forall t\in[0, T].
    \end{equation} 
\end{theorem}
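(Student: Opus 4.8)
The plan is to verify that the right-hand side of~\eqref{eq:backward representation} solves the same Cauchy problem~\eqref{eq::nonlinear control flow notation} and then invoke uniqueness (which holds since $N$ is globally Lipschitz on $L^p(\R^d)$, giving well-posedness in $C^0([0,T];L^p(\R^d))$). Write $b(t) := U_T(a_0) + \int_0^t DU_{T-s}(a(s))\,I(s)\,ds$, so that the claimed identity reads $a(t) = U_{t-T}(b(t))$. At $t=0$ we have $b(0) = U_T(a_0)$, hence $U_{-T}(b(0)) = U_{-T}(U_T(a_0)) = a_0$ because $V_T = U_{-T}$ is the inverse flow; so the initial condition is satisfied. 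The substantive part is to differentiate $t \mapsto U_{t-T}(b(t))$ in $t$ and check it equals $N(U_{t-T}(b(t))) + I(t)$.

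For the differentiation, I would first treat the case $I \in C^0$ (so $b \in C^1$) and recover the general $L^1$ case by a density/approximation argument at the end. By the chain rule for the (Fréchet-differentiable) flow,
\[
\frac{d}{dt}\,U_{t-T}(b(t)) = \Bigl.\frac{d}{d\tau}U_\tau(b(t))\Bigr|_{\tau=t-T} + DU_{t-T}(b(t))\,\dot b(t).
\]
The first term is $N\bigl(U_{t-T}(b(t))\bigr)$ directly from~\eqref{eq::nonlinear flow}. For the second term, $\dot b(t) = DU_{T-t}(a(t))\,I(t)$ by the fundamental theorem of calculus, so the second term is $DU_{t-T}(b(t))\,DU_{T-t}(a(t))\,I(t)$. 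Here is where the structure of the statement is used: assuming inductively (or by a bootstrap/continuity argument on a maximal subinterval) that $U_{t-T}(b(t)) = a(t)$, we get $b(t) = U_{T-t}(a(t))$, whence $DU_{t-T}(b(t)) = DU_{t-T}(U_{T-t}(a(t))) = \bigl[DU_{T-t}(a(t))\bigr]^{-1}$ by the inverse-differential identity~\eqref{inverse of the differential of the flow} (with $t$ there replaced by $T-t$, using that $U_s$ is defined for all real $s$). Therefore $DU_{t-T}(b(t))\,DU_{T-t}(a(t)) = \idty$, and the second term collapses to $I(t)$. Combining, $\frac{d}{dt}U_{t-T}(b(t)) = N(U_{t-T}(b(t))) + I(t)$, so $U_{t-T}(b(\cdot))$ solves~\eqref{eq::nonlinear control flow notation} with the correct initial data, and uniqueness forces it to coincide with $a(\cdot)$ on all of $[0,T]$.

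The main obstacle is the apparent circularity: the identity $DU_{t-T}(b(t)) = [DU_{T-t}(a(t))]^{-1}$ that makes the computation close up already presupposes $a(t) = U_{t-T}(b(t))$, the very thing to be proved. I would resolve this cleanly by defining $\tilde a(t) := U_{t-T}(b(t))$ as an independent candidate and showing directly — without assuming $\tilde a = a$ — that $\tilde a$ satisfies~\eqref{eq::nonlinear control flow notation}: one still has $\dot{\tilde a}(t) = N(\tilde a(t)) + DU_{t-T}(b(t))\,DU_{T-t}(a(t))\,I(t)$, and it remains only to check that the operator $DU_{t-T}(b(t))\,DU_{T-t}(a(t))$ is the identity. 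For this I would run a short fixed-point/uniqueness argument on $[0,T]$: the set where $\tilde a = a$ is closed, contains $0$, and — using continuity of the flow and its differential together with Gronwall on the difference $\tilde a - a$ — is open, hence all of $[0,T]$; once $\tilde a \equiv a$ is known, the operator identity via~\eqref{inverse of the differential of the flow} holds and the representation is established. The remaining technical points — justifying the chain rule for $t\mapsto U_{t-T}(b(t))$ in the Banach-space setting, and passing from $I\in C^0$ to $I\in L^1((0,T);L^p(\R^d))$ by approximating $I$ in $L^1$ and using continuous dependence of both sides — are routine given the regularity already recorded in Section~\ref{sss:model-formulation} (in particular~\eqref{eq:operator norm of DN} and the invertibility of $DU_t(a_0)$), and I would not belabor them.
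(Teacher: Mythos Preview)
Your approach has the right skeleton, but the circularity you flag is real and your proposed fix is both more involved than necessary and not obviously complete. The paper sidesteps the issue by running the computation in the opposite direction: instead of defining $b$ by the integral and then trying to verify $U_{t-T}(b(t))=a(t)$, it \emph{starts} by setting $b(s):=U_{T-s}(a(s))$ (equivalently $a(s)=V_{T-s}(b(s))$), and differentiates this definition using the known equation for $a$. The chain rule gives $\dot a(s)=N(a(s))+DV_{T-s}(b(s))\dot b(s)$, and comparing with $\dot a(s)=N(a(s))+I(s)$ yields $\dot b(s)=[DV_{T-s}(b(s))]^{-1}I(s)=DU_{T-s}(a(s))I(s)$ by~\eqref{inverse of the differential of the flow}. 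Integrating from $b(0)=U_T(a_0)$ produces the representation directly---no bootstrap, no open-closed argument.

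The computational identity that makes this work, and that your verification route never invokes, is $DU_\tau(x)\,N(x)=N(U_\tau(x))$ (differentiate the group law $U_{\tau+s}(x)=U_\tau(U_s(x))$ at $s=0$). With it, the function $c(t):=U_{T-t}(a(t))$ satisfies $\dot c(t)=-N(U_{T-t}(a(t)))+DU_{T-t}(a(t))[N(a(t))+I(t)]=DU_{T-t}(a(t))I(t)=\dot b(t)$ and $c(0)=b(0)$, so $b\equiv c$ and $\tilde a\equiv a$ immediately. Your Gronwall-on-the-difference route instead requires bounding $\|DU_{T-s}(\tilde a(s))-DU_{T-s}(a(s))\|$ linearly in $\|\tilde a(s)-a(s)\|$; that needs Lipschitz dependence of $DU_\tau$ on its base point, i.e., a bound on $\partial DU_\tau$, which the paper only establishes for $p\in[2,\infty]$ (Proposition~\ref{pro:second differential of N} and~\eqref{eq:spectral norm of D^2_phi}), whereas the theorem is claimed for all $p\in(1,\infty]$. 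So as written your resolution has a gap in the range $p\in(1,2)$.
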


The second solution representation, termed as a forward representation of the solution to \eqref{eq::nonlinear control flow notation}, is as follows. The proof is identical to that of Theorem~\ref{thm:backward representation} and will be omitted for brevity.

\begin{theorem}\label{thm:forward representation} 
    Let $p\in(1,\infty]$ and $T>0$. For all $(a_0, I)\in L^p(\R^d)\times L^1((0, T); L^p(\R^d))$, the solution $a\in C^0([0, T]; L^p(\R^d))$ to~\eqref{eq::nonlinear control flow notation} can be expressed as
    \begin{equation}\label{eq:forward representation}
        a(t) = U_t\left(a_0+\int_{0}^{t}DV_s(a(s)) I(s)ds\right),\qquad\forall t\in[0, T].
    \end{equation} 
\end{theorem}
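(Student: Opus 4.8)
The plan is to reduce Theorem~\ref{thm:forward representation} to a single identity relating the drift flow and its differential, exactly mirroring the argument for Theorem~\ref{thm:backward representation}. Fix $(a_0, I)\in L^p(\R^d)\times L^1((0,T);L^p(\R^d))$ and let $a\in C^0([0,T];L^p(\R^d))$ be the unique solution of~\eqref{eq::nonlinear control flow notation}; since $N$ is globally Lipschitz on $L^p(\R^d)$, $a$ is absolutely continuous in time and satisfies $\dot a(t)=N(a(t))+I(t)$ for a.e.\ $t\in[0,T]$. The key object is the pulled-back curve $b(t):=V_t(a(t))$, which transports the trajectory along the inverse drift flow. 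I would show that $b$ solves the ``driftless'' equation $\dot b(t)=DV_t(a(t))\,I(t)$ with $b(0)=a_0$; integrating this over $[0,t]$ and then applying $U_t$ (the inverse of $V_t$, as $V_t=U_{-t}$) to both sides gives exactly~\eqref{eq:forward representation}.

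The first step is the flow identity $DV_t(c)\,N(c)=N(V_t(c))$ for all $t\in\R$ and $c\in L^p(\R^d)$. I would obtain it by differentiating $s\mapsto V_t(U_s(c))$ at $s=0$: this curve equals $U_{s-t}(c)$ by the flow property, so its $s$-derivative at $s=0$ is $N(U_{-t}(c))=N(V_t(c))$ by~\eqref{eq::nonlinear flow}; on the other hand, the chain rule in $L^p(\R^d)$---legitimate since $V_t$ is Fréchet differentiable with derivative governed by~\eqref{eq:equation derivative of the flow}---gives $DV_t(c)\,N(c)$ for the same quantity. The second step is to differentiate $b(t)=V_t(a(t))$ by the chain rule, separating the dependence of $V_\tau(c)$ on the flow-time $\tau$ from its dependence on the argument $c$: the flow-time contribution is $-N(V_t(a(t)))=-N(b(t))$ by~\eqref{eq::nonlinear flow psi}, and the argument contribution is $DV_t(a(t))\,\dot a(t)=DV_t(a(t))\big(N(a(t))+I(t)\big)$. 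By the flow identity the terms $-N(b(t))$ and $DV_t(a(t))N(a(t))$ cancel, leaving $\dot b(t)=DV_t(a(t))\,I(t)$ for a.e.\ $t$, while $b(0)=V_0(a_0)=a_0$. Integrating and applying $U_t$ finishes the proof.

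The main obstacle is making the chain rule rigorous for the composite curve $t\mapsto V_t(a(t))$ in this infinite-dimensional, low-regularity setting: because only $I\in L^1$, $a$ is merely absolutely continuous, so the differentiation has to be done for a.e.\ $t$, and one must verify that $(\tau,c)\mapsto V_\tau(c)$ carries enough joint regularity---continuity of $(\tau,c)\mapsto DV_\tau(c)$ in $\mathscr{L}(L^p(\R^d))$ together with local boundedness of the flow differentials---so that the composition rule is valid and $t\mapsto DV_t(a(t))\,I(t)$ lies in $L^1((0,T);L^p(\R^d))$, permitting the fundamental theorem of calculus. This is exactly where the hypothesis $p\in(1,\infty]$ enters: it ensures $N\in C^1(L^p(\R^d);L^p(\R^d))$ with uniformly bounded Fréchet derivative~\eqref{eq:operator norm of DN}, and inserting this bound into a Grönwall estimate for~\eqref{eq:equation derivative of the flow} yields the local bounds on $DU_t$ and $DV_t$ needed to secure the required integrability and to pass the derivative inside. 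The one remaining point---that $U_t\circ V_t=\idty$, so that $b(t)=V_t(a(t))$ can be inverted to $a(t)=U_t(b(t))$---is immediate from $V_t=U_{-t}$ and the flow property, and indeed the whole argument is word-for-word that of Theorem~\ref{thm:backward representation} with the forward flows $U_t$, $V_t$ in place of the backward flows $U_{T-t}$, $U_{t-T}$.
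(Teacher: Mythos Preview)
Your proposal is correct and follows essentially the same route as the paper, which simply states that the proof is identical to that of Theorem~\ref{thm:backward representation}: pull back the trajectory along the drift flow via $b(t)=V_t(a(t))$, show that $b$ satisfies the driftless equation $\dot b(t)=DV_t(a(t))I(t)$ with $b(0)=a_0$, integrate, and apply $U_t$. The only cosmetic difference is that the paper (in the backward proof) differentiates the relation $a(s)=U_s(b(s))$ and then inverts $DU_s(b(s))$ using~\eqref{inverse of the differential of the flow}, whereas you differentiate $b(t)=V_t(a(t))$ directly and invoke the flow identity $DV_t(c)N(c)=N(V_t(c))$ to cancel the drift; these are two sides of the same computation.
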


\begin{remark}\label{rmk:on L^1}
    Theorems~\ref{thm:backward representation} and~\ref{thm:forward representation} are stated for $p \in (1, \infty]$, thereby excluding the case $p = 1$. This restriction stems from the fact that the (Fréchet) derivative operators $DU_t(a_0)$ and $DV_t(a_0)$, defined for $a_0 \in L^p(\mathbb{R}^d)$, are well-defined only when $p \in (1, \infty]$. This follows from \cite[Lemma~B.8]{tamekue2024mathematical}, which establishes that the nonlinear map $N$ is Fréchet differentiable only in this range of $p$.

    Whether $N$ is Fréchet differentiable in $L^1(\mathbb{R}^d)$ remains unclear. The proof of Fréchet differentiability in \cite[Lemma~B.8]{tamekue2024mathematical} relies on a classical result (see, e.g., \cite[Proposition~2.8, p.~60]{tapia1971differentiation}): let $\cX$ and $\cY$ be normed vector spaces, and let $g: \cX \to \cY$. If $g$ is Gâteaux differentiable and its Gâteaux derivative $\partial g: \cX \to \mathscr{L}(\cX, \cY)$ is continuous, then $g$ is Fréchet differentiable and belongs to $C^1(\cX, \cY)$.

    In \cite[Lemma~B.8]{tamekue2024mathematical}, the authors showed that the Gâteaux derivative of $N$ is not continuous from $L^1(\mathbb{R}^d)$ to $\mathscr{L}(L^1(\mathbb{R}^d))$. While this does not rule out Fréchet differentiability in $L^1(\mathbb{R}^d)$, it implies that if $N$ were Fréchet differentiable in $L^1(\R^d)$, the map $DN: L^1(\mathbb{R}^d) \to \mathscr{L}(L^1(\mathbb{R}^d))$ could not be continuous. In other words, $N$ could not belong to $C^1(L^1(\mathbb{R}^d); L^1(\mathbb{R}^d))$.
\end{remark}

Building on the solution representation in Theorem~\ref{thm:backward representation}, the following proposition outlines the expansion strategy that forms the foundation of our subsequent analysis and input syntheses. The proof is provided in Section~\ref{ss:proof of expansion for forward nominal-state synthesis}.

\begin{proposition}\label{pro:expansion for forward nominal-state synthesis}
   Let $p\in[2,\infty]$, $\tau>0$ and  $T\ge\tau$. Let $a_0\in L^p(\R^d)$, and $I_{sf, \tau}\in L^1((0,T); L^p(\R^d))$ be the step function defined by
   \begin{equation}\label{eq:initial step function}
       I_{sf, \tau}(t)=\begin{cases}
           0&\quad\text{if}\quad 0\le t\le T-\tau\\
           I&\quad\text{if}\quad T-\tau< t\le T
       \end{cases}
   \end{equation}
   where $I\in L^p(\R^d)$. 
   Then, the solution $a_\tau(\cdot)$ to \eqref{eq::nonlinear control flow notation} corresponding to $I_{sf, \tau}$ expands at time $t=T$ as
    \begin{equation}\tag{F1}\label{eq:expansion for forward nominal-state synthesis}
        a_\tau(T) = U_T(a_0)-\sum_{n=0}^{\infty}\frac{(-\tau)^{n+1}DN\left(U_T(a_0)\right)^n}{(n+1)!}DU_\tau(U_{T-\tau}(a_0))I-\varphi_{\tau,T}(I)I.
    \end{equation}
  Here $\varphi_{\tau,T}(I):=\kappa_{\tau,T}(I)+\chi_{\tau,T}(I)$ with
    \begin{equation}\label{eq:kappa and chi}
        \kappa_{\tau,T}(I) = \sum_{n=1}^{\infty}\int_{0}^{\tau}\frac{(-1)^nt^{n+1}}{(n+1)!}\left[\frac{d}{dt}Z_{t,\tau,T}^n\right]Q_{t,\tau,T}\,dt,\qquad\chi_{\tau,T}(I)= \sum_{n=1}^{\infty}\int_{0}^{\tau}\frac{(-t)^nZ_{t,\tau,T}^{n-1}}{n!}P_{t,\tau,T}\,dt
    \end{equation}
    where $Q_{t,\tau,T}:=DU_t(a_\tau(T-t))$, $Z_{t,\tau,T}:=DN(U_t(a_\tau(T-t)))$ and $P_{t,\tau,T}:=\partial DU_t(a_\tau(T-t))\dot{a}_\tau(T-t)$ is the Gâteaux derivative of $DU_t$ at $a_\tau(T-t)$ in the direction of $\dot{a}_\tau(T-t)$.
\end{proposition}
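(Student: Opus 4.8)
The plan is to start from the backward representation of Theorem~\ref{thm:backward representation} applied to the particular step input $I_{sf,\tau}$, and then unfold the operator exponentials that arise from the flow maps via a Taylor/Peano-type expansion with integral remainder. Since $I_{sf,\tau}$ vanishes on $[0,T-\tau]$, the integral $\int_0^t DU_{T-s}(a(s))I(s)\,ds$ is identically zero for $t\le T-\tau$, so for $t\in(T-\tau,T]$ it reduces to $\int_{T-\tau}^t DU_{T-s}(a(s))\,I\,ds$. Evaluating \eqref{eq:backward representation} at $t=T$ gives
\begin{equation}
    a_\tau(T) = U_0\!\left(U_T(a_0)+\int_{T-\tau}^{T}DU_{T-s}(a_\tau(s))\,I\,ds\right) = U_T(a_0)+\int_{T-\tau}^{T}DU_{T-s}(a_\tau(s))\,I\,ds,
\end{equation}
after the change of variables $t=T-s$ (so $t$ runs over $[0,\tau]$), yielding $a_\tau(T)=U_T(a_0)+\int_0^\tau DU_t(a_\tau(T-t))\,I\,dt$. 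The strategy is then to split $DU_t(a_\tau(T-t))$ into the ``nominal'' term $DU_t(U_{T-\tau}(a_0))$-type contribution and a correction. Actually the cleanest route is to expand $DU_t(U_{T-\tau}(a_0))$ itself first, writing $DU_t$ as the solution of the variational equation in \eqref{eq:equation derivative of the flow} and Taylor-expanding around $t=0$: by \eqref{eq:equation derivative of the flow} we have $\frac{d}{dt}DU_t = DN(U_t(\cdot))DU_t$, so a Neumann/Peano expansion gives $DU_\tau(U_{T-\tau}(a_0)) = \sum_{n\ge0}\frac{\tau^n}{n!}DN(\cdot)^n\,\mathrm{Id} + \text{remainder}$ along the nominal trajectory, and integrating the leading order against $I$ over $[0,\tau]$ produces precisely the series $-\sum_{n\ge0}\frac{(-\tau)^{n+1}}{(n+1)!}DN(U_T(a_0))^n DU_\tau(U_{T-\tau}(a_0))I$ once one matches $U_T(a_0)=U_\tau(U_{T-\tau}(a_0))$ and identifies the operator $DN$ evaluated along the nominal flow at the appropriate point.

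In more detail, I would proceed as follows. First, establish the reduced identity $a_\tau(T)=U_T(a_0)+\int_0^\tau DU_t(a_\tau(T-t))I\,dt$ as above. Second, for each fixed $t\in[0,\tau]$ use Taylor's theorem with integral remainder for the map $s\mapsto DU_s(a_\tau(T-t))$ — or rather integrate the variational equation — to write $DU_t(a_\tau(T-t))I = \sum_{n=0}^{\infty}\frac{(-t)^n}{n!}Z_{t,\tau,T}^{\,n}\cdot(\text{something}) + \dots$; the signs and the factor $(-1)^n$ come from the fact that we will want the expansion organized around $U_T(a_0)$, pushing the difference $t\mapsto T-t$ versus the reference point into the correction terms $\kappa$ and $\chi$. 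Third, carefully track the dependence of the base point $a_\tau(T-t)$ on $t$: differentiating $t\mapsto DU_t(a_\tau(T-t))$ produces two contributions, one from the explicit $t$ in the flow parameter (giving the $Z_{t,\tau,T}$ factors and the $\frac{d}{dt}Z^n$ in $\kappa_{\tau,T}$), and one from the chain rule hitting $a_\tau(T-t)$ whose $t$-derivative is $-\dot a_\tau(T-t)$ (giving the Gâteaux-derivative term $P_{t,\tau,T}=\partial DU_t(a_\tau(T-t))\dot a_\tau(T-t)$ appearing in $\chi_{\tau,T}$). Collecting the $n=0$ term as the ``clean'' part, the $n\ge1$ terms evaluated at the nominal point as the explicit series, and everything else as $-\varphi_{\tau,T}(I)I = -(\kappa_{\tau,T}(I)+\chi_{\tau,T}(I))I$, gives \eqref{eq:expansion for forward nominal-state synthesis}.

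The hard part will be the bookkeeping in the third step: justifying the term-by-term differentiation under the integral sign and the interchange of the infinite sum with the $\int_0^\tau$, which requires the uniform operator bounds. Here \eqref{eq:operator norm of DN} is the key tool — since $\|DN(\psi)\|_{\mathscr L(L^p)}\le\alpha+\mu\|\omega\|_1$ uniformly in $\psi$ (and the sharper $L^2$ bound when $p=2$), the variational equations in \eqref{eq:equation derivative of the flow} give $\|DU_t(a_0)\|_{\mathscr L(L^p)}\le e^{(\alpha+\mu\|\omega\|_1)|t|}$ and similarly for $DV_t$, so the Neumann-type series converge absolutely and uniformly for $t\in[0,\tau]$; Grönwall-type estimates also control $\|\dot a_\tau\|$ along the trajectory, which bounds $P_{t,\tau,T}$ and hence $\chi_{\tau,T}(I)$. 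One must also invoke that $N\in C^1(L^p(\R^d);L^p(\R^d))$ for $p\in(1,\infty]$ (Assumptions~\ref{ass:general assumption} and \cite[Lemma~B.8]{tamekue2024mathematical}) so that the Gâteaux derivative $\partial DU_t$ defining $P_{t,\tau,T}$ makes sense, and that $DU_t(a_0)$ is invertible with $[DU_t(a_0)]^{-1}=DV_t(U_t(a_0))$ from \eqref{inverse of the differential of the flow} if any inversions appear in matching the series to the closed form. The restriction $p\in[2,\infty]$ (slightly stronger than $p\in(1,\infty]$ of the representation theorems) is presumably needed precisely to have these operator bounds and the differentiability of the map $w\mapsto\|w\|_p^2/2$ available uniformly; I would flag that at the start of the proof and otherwise treat the convergence estimates as the routine-but-delicate core, delegating the purely algebraic rearrangement of indices to a direct computation.
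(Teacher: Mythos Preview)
Your starting point is exactly right and matches the paper: apply Theorem~\ref{thm:backward representation} with the step input $I_{sf,\tau}$, use $U_0=\idty$, and change variables $t\mapsto T-t$ to obtain
\[
a_\tau(T)=U_T(a_0)+\int_0^\tau Q_{t,\tau,T}\,I\,dt,\qquad Q_{t,\tau,T}=DU_t(a_\tau(T-t)).
\]
You also correctly identify that the total $t$-derivative of $Q_{t,\tau,T}$ splits into the flow-parameter piece $Z_{t,\tau,T}Q_{t,\tau,T}$ and the base-point piece $-P_{t,\tau,T}$, which is exactly the dichotomy feeding $\kappa_{\tau,T}$ and $\chi_{\tau,T}$.

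Where your plan goes off track is the mechanism in your ``second step''. The paper does \emph{not} freeze the base point and Taylor-expand $s\mapsto DU_s(a_\tau(T-t))$; instead it performs \emph{successive integration by parts} directly on $\int_0^\tau Q_{t,\tau,T}I\,dt=\int_0^\tau t'\,Q_{t,\tau,T}I\,dt$ (cf.\ Remark~\ref{rmk:SES}(i)). Each IBP produces a boundary term at $t=\tau$ --- and this is the key point you are missing --- where $Q_{\tau,\tau,T}=DU_\tau(U_{T-\tau}(a_0))$ and $Z_{\tau,\tau,T}=DN(U_\tau(U_{T-\tau}(a_0)))=DN(U_T(a_0))$, since $a_\tau(T-\tau)=U_{T-\tau}(a_0)$. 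That is precisely why the clean series in \eqref{eq:expansion for forward nominal-state synthesis} carries the factors $DN(U_T(a_0))^n$ and $DU_\tau(U_{T-\tau}(a_0))$: they are the boundary values at $t=\tau$, not Taylor coefficients at $t=0$. A Taylor expansion in the flow parameter with the base point held fixed would instead produce powers of $DN(a_\tau(T-t))$ with $t$ still under the integral, which does not collapse to the stated form. So your ``second'' and ``third'' steps should be merged into a single repeated IBP on the full integrand; the $\tfrac{d}{dt}Z^n_{t,\tau,T}$ integrals accumulate into $\kappa_{\tau,T}$, the $Z^{n-1}P$ integrals into $\chi_{\tau,T}$, and the boundary terms into the explicit series.

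One further correction: the restriction $p\in[2,\infty]$ is not about the differentiability of $w\mapsto\|w\|_p^2/2$. It is needed because the remainder operators $\kappa_{\tau,T}$ and $\chi_{\tau,T}$ involve $\partial DN$ (through $\tfrac{d}{dt}Z_{t,\tau,T}$) and $\partial DU_t$ (through $P_{t,\tau,T}$), i.e.\ a \emph{second} derivative of $N$, and Proposition~\ref{pro:second differential of N} only guarantees $\partial DN(u)h\in\mathscr{L}(L^p(\R^d))$ with the bound $\mu\|f''\|_\infty\|\omega\|_q\|h\|_p$ for $p\ge 2$; see Remark~\ref{rmk:on the range of p}. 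This, together with \eqref{eq:operator norm of DN} and Lemma~\ref{lem:general estimates flows}, is what makes the series in \eqref{eq:kappa and chi} converge in $\mathscr{L}(L^p(\R^d))$ (Lemma~\ref{lem:well-defined kappa and chi}).
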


\begin{remark}
   In Proposition~\ref{pro:expansion for forward nominal-state synthesis}, the expansion of the solution $a(\cdot)$ to~\eqref{eq::nonlinear control flow notation} on $[0,T]$ for a constant in time input $I \in L^p(\mathbb{R}^d)$ is obtained by setting $\tau = T$.
\end{remark}
\begin{remark}\label{rmk:on the range of p}
    Proposition~\ref{pro:expansion for forward nominal-state synthesis} is stated for $p \in [2, \infty]$, excluding the case $p \in (1, 2)$. While the operators $DN\left(U_T(a_0)\right)$ and $DN(U_t(a_\tau(T-t)))$ belong to $\mathscr{L}(L^p(\mathbb{R}^d))$ for all $p \in (1, \infty]$, it follows from Proposition~\ref{pro:second differential of N} that the operators $\kappa_{\tau,T}(I)$ and $\chi_{\tau,T}(I)$ are well-defined and belong to $\mathscr{L}(L^p(\mathbb{R}^d))$ when $p \in [2, \infty]$.
\end{remark}

\begin{remark}\label{rmk:SES}
    The solution of~\eqref{eq::nonlinear control flow notation} is implicitly represented in both Theorems~\ref{thm:backward representation} and~\ref{thm:forward representation}. Each representation enables multiple expansion strategies for expressing the final state as a series involving either the initial state $a_0$ or the final state itself. 
    % The choice of expansion depends on how integration by parts is applied, and each case gives rise to a distinct synthesis methodology: \textit{forward final-state}, \textit{forward nominal-state}, \textit{backward initial-state}, or \textit{backward nominal-state}, following the terminology introduced in~\cite{tamekue2024control}.
    
    \noindent \textbf{From the backward representation} (Theorem~\ref{thm:backward representation}):
    \[
    a_\tau(T) = U_T(a_0) + \int_0^T DU_{T-t}(a_\tau(t)) I_{sf, \tau}(t) \, dt=U_T(a_0) + \int_0^\tau DU_t(a_\tau(T-t)) I \, dt,
    \]
    two expansion strategies are possible.
    \begin{itemize}
        \item[(i)] Using the identity $\int_0^\tau DU_t(a_\tau(T-t)) I \, dt = \int_0^\tau t' DU_t(a_\tau(T-t)) I \, dt$, and applying successive integration by parts, we obtain \eqref{eq:expansion for forward nominal-state synthesis}. This yields the \emph{forward nominal-state} synthesis.
        \item[(ii)] Alternatively, using $\int_0^\tau DU_t(a_\tau(T-t)) I \, dt = \int_0^\tau (t-\tau)' DU_T(a_\tau(T-t)) I \, dt$, we derive
        \begin{equation}\tag{F2}\label{eq:forward2}
            a_\tau(T) = U_T(a_0) + \sum_{n=0}^\infty \frac{\tau^{n+1} DN(a_\tau(T))^n}{(n+1)!} I - \beta_{\tau,T}(I) I
        \end{equation}
        for some operator $\beta_{\tau,T}(I) \in \mathscr{L}(L^p(\mathbb{R}^d))$. This leads to the \emph{forward final-state} synthesis.
    \end{itemize}

    \medskip
    \noindent \textbf{From the forward representation} (Theorem~\ref{thm:forward representation}):
    \[
    V_T(a_\tau(T)) = a_0 + \int_0^T DV_t(a_\tau(t)) I_{sf, \tau}(t) \, dt = a_0 + \int_0^\tau DV_{T-t}(a_\tau(T-t)) I \, dt,
    \]
    two expansion strategies arise.
    \begin{itemize}
        \item[(i)] Using $\int_0^\tau DV_{T-t}(a_\tau(T-t)) I \, dt  = \int_0^\tau t' DV_{T-t}(a_\tau(T-t)) I \, dt$, we obtain
        \begin{equation}\tag{B1}\label{eq:backward1}
            V_T(a_\tau(T)) =  a_0 - \sum_{n=0}^\infty\frac{(-\tau)^{n+1} DN(a_0)^n}{(n+1)!}DV_{T-\tau}(U_{T-\tau}(a_0)) I - \eta_{\tau, T}(I) I
        \end{equation}
        for some operator $\eta_{\tau, T}(I) \in \mathscr{L}(L^p(\mathbb{R}^d))$. This gives rise to the \emph{backward initial-state} synthesis.
        
        \item[(ii)] Alternatively, using $\int_0^\tau DV_{T-t}(a_\tau(T-t)) I \, dt = \int_0^\tau(t-\tau)' DV_{T-t}(a_\tau(T-t)) I \, dt$, we obtain
        \begin{equation}\tag{B2}\label{eq:backward2}
            V_T(a_\tau(T)) = a_0 + \sum_{n=0}^\infty \frac{\tau^{n+1} DN(V_T(a_\tau(T)))^n}{(n+1)!} DV_T(a_\tau(T)) I - \zeta_{\tau, T}(I) I,
        \end{equation}
        for some operator $\zeta_{\tau, T}(I) \in \mathscr{L}(L^p(\mathbb{R}^d))$. This yields the \emph{backward nominal-state} synthesis.
    \end{itemize}
\end{remark}

\subsubsection{Forward nominal-state  synthesis}\label{sss:implicit forward nominal-state synthesis}

This section presents the generic synthesis based on the series expansion of the solution to~\eqref{eq::nonlinear control flow notation} given in Proposition~\ref{pro:expansion for forward nominal-state synthesis}. We provide a detailed analysis of this approach, which serves as a prototypical example of the family of synthesis strategies derived from the dual representations of the solution of~\eqref{eq::nonlinear control flow notation}, as discussed in Remark~\ref{rmk:SES}. The remaining synthesis strategies---the forward final-state, backward initial-state, and backward nominal-state formulations---can be derived through analogous arguments.

The first results are presented in the following theorem. It provides an implicit step-function input that solves the control objective. The proof is presented in Section~\ref{ss:proof of forward nominal-state synthesis}.

\begin{theorem}\label{thm:forward nominal-state synthesis}
     % Assume that $f$ is third times derivable and the third derivative $f^{(3)}$ is bounded. 
      Let $p\in [2, \infty]$ and $(a_0,\, a_1) \in L^p(\mathbb{R}^d)^2$. Let $T\ge\tau$ where $\tau>0$ be such that the following spectral condition holds
    \begin{equation}\label{eq:SC forward nominal-state synthesis}
         \sigma(DN(U_T(a_0)))\cap\left\{i \frac{2 \pi \ell}{\tau} \in \mathbb{C} \mid\ell\in\Z\right\}=\emptyset.
    \end{equation}
     Then, the solution $a_\tau(\cdot)$ to~\eqref{eq::nonlinear control flow notation} corresponding to the step-function input $I_{sf, \tau}(\cdot)$ defined in \eqref{eq:initial step function} satisfies $a_\tau(T)=a_1$, if and only if, $I\in L^p (\R^d)$ satisfies
    \begin{equation}\label{eq:implicit forward nominal-state synthesis}
    \left[\idty-\cA_{\tau,T}(a_0)\varphi_{\tau,T}(I)\right]I=\cA_{\tau,T}(a_0)(a_1-U_T(a_0))
\end{equation}
where $\varphi_{\tau,T}(I)\in\mathscr{L}(L^p(\R^d))$ is defined as in Proposition~\ref{pro:expansion for forward nominal-state synthesis} and $\cA_{\tau,T}(a_0)\in\mathscr{L}(L^p(\R^d))$ is given by
\begin{equation}\label{eq:operator B_T}
        \cA_{\tau,T}(a_0) := DV_\tau(U_T(a_0))\left[\idty-e^{-\tau DN(U_T(a_0))} \right]^{-1}DN(U_T(a_0)).
    \end{equation}
\end{theorem}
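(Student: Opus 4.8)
The plan is to start from the series expansion \eqref{eq:expansion for forward nominal-state synthesis} in Proposition~\ref{pro:expansion for forward nominal-state synthesis}, which already expresses $a_\tau(T)$ in terms of $U_T(a_0)$, the input $I$, and the operator $\varphi_{\tau,T}(I)$. Thus the condition $a_\tau(T) = a_1$ is equivalent to
\[
a_1 - U_T(a_0) = -\sum_{n=0}^{\infty}\frac{(-\tau)^{n+1}DN(U_T(a_0))^n}{(n+1)!}\,DU_\tau(U_{T-\tau}(a_0))\,I \;-\; \varphi_{\tau,T}(I)\,I.
\]
The first step is therefore purely algebraic: identify the leading linear-in-$I$ operator. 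I would recognize the series $\sum_{n\ge 0}\frac{(-\tau)^{n+1}}{(n+1)!}DN(U_T(a_0))^n$ as $-\,DN(U_T(a_0))^{-1}\bigl(e^{-\tau DN(U_T(a_0))}-\idty\bigr)$ whenever $DN(U_T(a_0))$ is invertible, and more robustly (without requiring invertibility a priori) note that $\bigl(\idty - e^{-\tau DN(U_T(a_0))}\bigr)$ factors through this series times $DN(U_T(a_0))$; this is the standard identity $\idty-e^{-\tau B} = \bigl(\sum_{n\ge0}\frac{(-\tau)^{n+1}B^n}{(n+1)!}\bigr)(-B) = B\bigl(\sum_{n\ge0}\frac{(-\tau)^{n+1}B^n}{(n+1)!}\bigr)(-1)$ — so up to sign the leading operator is $\bigl(\idty-e^{-\tau DN(U_T(a_0))}\bigr)DN(U_T(a_0))^{-1}$ composed with $DU_\tau(U_{T-\tau}(a_0))$.

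The second step is to invoke the spectral condition \eqref{eq:SC forward nominal-state synthesis}. Since $DN(U_T(a_0))\in\mathscr{L}(L^p(\R^d))$ and its spectrum avoids the points $\{2\pi i\ell/\tau : \ell\in\Z\}$, the spectral mapping theorem gives $1\notin\sigma\bigl(e^{-\tau DN(U_T(a_0))}\bigr)$, i.e. $\idty - e^{-\tau DN(U_T(a_0))}$ is boundedly invertible; in particular $0\notin\sigma(DN(U_T(a_0)))$ (take $\ell=0$), so $DN(U_T(a_0))$ is invertible too. Hence the composite operator
\[
DV_\tau(U_T(a_0))\Bigl[\idty-e^{-\tau DN(U_T(a_0))}\Bigr]^{-1}DN(U_T(a_0)) \;=\; \cA_{\tau,T}(a_0)
\]
is a well-defined element of $\mathscr{L}(L^p(\R^d))$ — here I use that $DV_\tau(U_T(a_0)) = [DU_\tau(U_{T-\tau}(a_0))]^{-1}$ by \eqref{inverse of the differential of the flow} (applied with base point $U_{T-\tau}(a_0)$, noting $U_\tau(U_{T-\tau}(a_0))=U_T(a_0)$), so $\cA_{\tau,T}(a_0)$ is precisely the inverse of the leading linear operator appearing in the expansion. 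Applying $\cA_{\tau,T}(a_0)$ to both sides of the displayed equivalence and collecting the $\varphi_{\tau,T}(I)I$ term on the left yields exactly \eqref{eq:implicit forward nominal-state synthesis}; conversely, multiplying \eqref{eq:implicit forward nominal-state synthesis} by the (invertible) leading operator and using the expansion runs the argument backward, giving the ``if'' direction.

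The main obstacle is bookkeeping rather than conceptual: one must be careful that the series rearrangements are legitimate in the operator-norm topology of $\mathscr{L}(L^p(\R^d))$. This requires the uniform bound \eqref{eq:operator norm of DN} on $\|DN(\psi)\|$ to ensure the exponential series and its ``divided-difference'' cousin $\sum_{n\ge0}\frac{(-\tau)^{n+1}B^n}{(n+1)!}$ converge absolutely, and it requires $\varphi_{\tau,T}(I)=\kappa_{\tau,T}(I)+\chi_{\tau,T}(I)\in\mathscr{L}(L^p(\R^d))$ for $p\in[2,\infty]$, which is exactly what Remark~\ref{rmk:on the range of p} and Proposition~\ref{pro:second differential of N} supply (and is why $p\ge 2$ is needed). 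A secondary point is that the equivalence must be an ``if and only if'': the forward direction is the expansion of Proposition~\ref{pro:expansion for forward nominal-state synthesis} specialized to $I_{sf,\tau}$, and the reverse direction only needs that $\cA_{\tau,T}(a_0)$ is invertible — which follows since each of its three factors ($DV_\tau(U_T(a_0))$, $[\idty-e^{-\tau DN(U_T(a_0))}]^{-1}$, $DN(U_T(a_0))$) is invertible under \eqref{eq:SC forward nominal-state synthesis}. I would close by remarking that $\idty-\cA_{\tau,T}(a_0)\varphi_{\tau,T}(I)$ need not be invertible for arbitrary $I$, which is why \eqref{eq:implicit forward nominal-state synthesis} is still an implicit (fixed-point-type) equation in $I$, to be addressed separately.
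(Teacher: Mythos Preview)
Your proposal is correct and follows essentially the same route as the paper: start from the expansion \eqref{eq:expansion for forward nominal-state synthesis}, recognize the series $-\sum_{n\ge0}\frac{(-\tau)^{n+1}B^n}{(n+1)!}$ as $B^{-1}(\idty-e^{-\tau B})$ with $B=DN(U_T(a_0))$, invoke the spectral condition to invert $\idty-e^{-\tau B}$ and $B$, and then left-multiply by $\cA_{\tau,T}(a_0)$ (the paper does this in two steps, multiplying first by $B$ and then by the remaining factors, but the algebra is identical). Your converse argument via invertibility of $\cA_{\tau,T}(a_0)$ is exactly the paper's, which phrases it as $DN(U_T(a_0))(a_\tau(T)-a_1)=0$ and then uses invertibility of $DN(U_T(a_0))$.
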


\begin{remark}\label{rmk:on the SA in the nonlinear settings}
We first refer the reader to Remark~\ref{rmk:on the range of p} for a discussion of the restriction to the range $p \in [2, \infty]$. As in the linear case (see Lemma~\ref{lem:on the SA in the linear case}), the spectral condition in~\eqref{eq:SC forward nominal-state synthesis} is automatically satisfied when $\mu < \mu_0$---or even $\mu < \mu_1$ when $p = 2$---as a consequence of Lemma~\ref{lem:norm of exp A_t(U_t(a0))}. 

By contrast, in the nonlinear setting, it remains unclear whether the implicit equation~\eqref{eq:implicit forward nominal-state synthesis}, which defines the candidate constant-in-time input, admits at least one solution $I \in L^p(\mathbb{R}^d)$ when the operator $\varphi_{\tau,T}(I)$ is not identically zero. Nevertheless, \eqref{eq:kappa and chi} shows that $\varphi_{\tau,T}(I) \equiv 0$ when the transfer function $f$ is linear.
\end{remark}

The step-function input synthesis in Theorem~\ref{thm:forward nominal-state synthesis} is implicitly defined in terms of the unknown $I \in L^p(\mathbb{R}^d)$, which may limit the ability of its direct use in practice or numerical implementation. To address this, we now derive an explicit, tractable, and effective approximation of the step-function input synthesized in Theorem~\ref{thm:forward nominal-state synthesis}. This approximation offers practical and computational advantages, particularly for numerical implementations.

We begin with the following result, which provides an expansion of $\varphi_{\tau,T}(I)$ as $\tau \to 0$. The proof is provided in Section~\ref{ss:proof of expansion of varphi_tau,T}.

\begin{proposition}\label{pro:expansion of varphi_tau,T}
   Under the assumptions of Theorem~\ref{thm:forward nominal-state synthesis}, the following expansion holds
     \begin{equation}\label{eq:expansion of varphi_tau,T}
         \varphi_{\tau, T}(I) = \left[e^{\tau \Phi_T(a_0)}-\idty\right]\Phi_T(a_0)^{-1}\left[e^{-\tau \Phi_T(a_0)}DU_\tau(U_{T-\tau}(a_0))-\idty\right]+\cO(\tau^3),\quad\text{as}\quad\tau\to 0.
     \end{equation}
     where $\Phi_T(a_0):=DN(U_T(a_0))$. Moreover, the following expansion also holds
     \begin{equation}\label{eq:A_tau,T and varphi_tau,T}
         \idty-\cA_{\tau,T}(a_0)\varphi_{\tau,T}(I)=DV_\tau(U_T(a_0))e^{-\tau \Phi_T(a_0)}+\cO(\tau^2),\quad\text{as}\quad\tau\to 0.
     \end{equation}
\end{proposition}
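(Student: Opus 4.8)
The plan is to expand each of the two series $\kappa_{\tau,T}(I)$ and $\chi_{\tau,T}(I)$ defined in~\eqref{eq:kappa and chi} in powers of $\tau$, retaining the leading terms and controlling the remainder at order $\tau^3$. The key point is that the integrands in~\eqref{eq:kappa and chi} involve $Q_{t,\tau,T}=DU_t(a_\tau(T-t))$, $Z_{t,\tau,T}=DN(U_t(a_\tau(T-t)))$ and $P_{t,\tau,T}$, all of which are evaluated at states $a_\tau(T-t)$ that converge to $a_0$ on the shrinking window $0\le t\le\tau$, since $I_{sf,\tau}$ vanishes on $[0,T-\tau]$ so that $a_\tau(T-t)=U_{T-t}(a_0)$ for $t\in[0,\tau]$. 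Hence on this window one has the \emph{exact} identities $Q_{t,\tau,T}=DU_t(U_{T-t}(a_0))$ and $Z_{t,\tau,T}=DN(U_t(U_{T-t}(a_0)))$, with no dependence on $I$ at all; only the continuous dependence of $U$, $DU$ and $DN$ on their arguments, together with the group/flow identities, is needed. First I would record the $t\to0$ expansions: $Z_{t,\tau,T}=\Phi_T(a_0)+\mathcal{O}(t)$ where $\Phi_T(a_0)=DN(U_T(a_0))$, $DU_t(U_{T-t}(a_0))=\idty+\mathcal{O}(t)$, and more precisely, differentiating~\eqref{eq:equation derivative of the flow} along the flow, $\frac{d}{dt}Z_{t,\tau,T}=\mathcal{O}(1)$ uniformly in $t\in[0,\tau]$, and $P_{t,\tau,T}=\mathcal{O}(1)$ likewise (here one uses Proposition~\ref{pro:second differential of N} to make sense of $\partial DU_t$ in $\mathscr{L}(L^p)$ for $p\in[2,\infty]$, which is exactly why the range of $p$ is restricted).

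Next I would compute the leading-order contribution of each series. In $\kappa_{\tau,T}(I)$, the $n$-th term carries $\int_0^\tau t^{n+1}\,dt=\tau^{n+2}/(n+2)$, so only $n=1$ contributes at order $\tau^3$ and the $n\ge2$ terms are $\mathcal{O}(\tau^4)$; thus $\kappa_{\tau,T}(I)=\mathcal{O}(\tau^3)$ and is absorbed into the remainder. In $\chi_{\tau,T}(I)$, the $n$-th term carries $\int_0^\tau t^{n}\,dt=\tau^{n+1}/(n+1)$, so the $n=1$ term is $\mathcal{O}(\tau^2)$, the $n=2$ term $\mathcal{O}(\tau^3)$ and $n\ge3$ terms $\mathcal{O}(\tau^4)$; hence $\chi_{\tau,T}(I)=\mathcal{O}(\tau^2)$, and only its $n=1$ term is needed modulo $\mathcal{O}(\tau^3)$. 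Matching this against the proposed closed form, I would expand the right-hand side of~\eqref{eq:expansion of varphi_tau,T}: writing $G:=DU_\tau(U_{T-\tau}(a_0))=\idty+\mathcal{O}(\tau)$ and $\Phi:=\Phi_T(a_0)$, one has $[e^{\tau\Phi}-\idty]\Phi^{-1}=\tau\idty+\mathcal{O}(\tau^2)$ and $[e^{-\tau\Phi}G-\idty]=\mathcal{O}(\tau)$, so the product is $\mathcal{O}(\tau^2)$, consistent with $\varphi_{\tau,T}(I)=\chi_{\tau,T}(I)+\mathcal{O}(\tau^3)$; one then verifies the $\tau^2$-coefficients agree. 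It is cleanest to argue the other way: substitute $\int_0^\tau DU_t(a_\tau(T-t))I\,dt$ from the backward representation, recognize the algebraic identity that produced~\eqref{eq:kappa and chi}, and read off that the finite-order truncation is precisely the bracketed expression in~\eqref{eq:expansion of varphi_tau,T}; I would follow whichever of these two bookkeeping routes the earlier proofs (Section~\ref{ss:proof of expansion for forward nominal-state synthesis}) make most natural.

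Finally, for~\eqref{eq:A_tau,T and varphi_tau,T} I would plug~\eqref{eq:expansion of varphi_tau,T} into $\idty-\cA_{\tau,T}(a_0)\varphi_{\tau,T}(I)$ using the definition~\eqref{eq:operator B_T} of $\cA_{\tau,T}(a_0)$. Expanding $[\idty-e^{-\tau\Phi}]^{-1}=\frac1\tau\Phi^{-1}e^{-\tau\Phi/?}\cdots$ — more simply, $[\idty-e^{-\tau\Phi}]^{-1}\Phi^{-1}$ has a simple $\tau\to0$ expansion — and combining with the $\mathcal{O}(\tau^2)$ size of $\varphi_{\tau,T}(I)$, the product $\cA_{\tau,T}(a_0)\varphi_{\tau,T}(I)$ turns out to equal $\idty-DV_\tau(U_T(a_0))e^{-\tau\Phi}+\mathcal{O}(\tau^2)$ after the $\Phi^{-1}$'s, the $[\idty-e^{-\tau\Phi}]^{-1}$ and the two exponential factors telescope; here one uses $DV_\tau(U_T(a_0))=[DU_\tau(U_{T-\tau}(a_0))]^{-1}$ from~\eqref{inverse of the differential of the flow} to match $G$ with $DV_\tau(U_T(a_0))^{-1}$. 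The main obstacle is not the algebra but the \emph{uniform} operator-norm control of the series remainders: one must check that the tails $\sum_{n\ge2}$ in $\kappa$ and $\sum_{n\ge3}$ in $\chi$ converge in $\mathscr{L}(L^p(\R^d))$ with bounds $\mathcal{O}(\tau^4)$ independent of $I$, which requires uniform bounds on $\|Z_{t,\tau,T}\|$, $\|\frac{d}{dt}Z_{t,\tau,T}\|$, $\|Q_{t,\tau,T}\|$ and $\|P_{t,\tau,T}\|$ over $t\in[0,\tau]$; these follow from~\eqref{eq:operator norm of DN}, Gronwall applied to~\eqref{eq:equation derivative of the flow}, and Proposition~\ref{pro:second differential of N}, and it is worth noting that the $I$-independence is what makes the final approximation genuinely explicit.
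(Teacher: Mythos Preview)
There is a genuine error at the very start of your argument. You claim that since $I_{sf,\tau}$ vanishes on $[0,T-\tau]$, one has $a_\tau(T-t)=U_{T-t}(a_0)$ for $t\in[0,\tau]$. But $t\in[0,\tau]$ corresponds to the time variable $T-t\in[T-\tau,T]$, which is exactly the window on which the input is \emph{active}, not the window on which it vanishes. Hence $a_\tau(T-t)$ genuinely depends on $I$ for $t\in(0,\tau)$; only the single endpoint $t=\tau$ gives $a_\tau(T-\tau)=U_{T-\tau}(a_0)$. The paper handles this correctly by first proving, via the backward representation, that $a_\tau(t)=U_t(a_0)+\cO(\tau)$ uniformly on $[0,T]$, and only then obtaining $Z_{t,\tau,T}=\Phi_T(a_0)+\cO(\tau)$, $Q_{t,\tau,T}=DU_t(U_{T-t}(a_0))+\cO(\tau)$, and $P_{t,\tau,T}=\partial DU_t(U_{T-t}(a_0))N(U_{T-t}(a_0))+\cO(\tau)$ as \emph{approximations}, not exact identities. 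Your ``no dependence on $I$ at all'' is therefore incorrect as stated (though the $I$-dependence does end up being absorbed in the $\cO(\tau)$ remainder).

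There is a second, more structural gap. Even after fixing the approximation of $a_\tau$, your plan to keep only the $n=1$ term of $\chi_{\tau,T}(I)$ modulo $\cO(\tau^3)$ does not by itself produce the closed form on the right of~\eqref{eq:expansion of varphi_tau,T}. The paper keeps the full series $\chi_{\tau,T}(a_0)$ (all $n\ge1$, with $a_\tau$ replaced by the nominal trajectory) and then resums it in closed form by invoking the flow identity of Lemma~\ref{lem:on the SC},
\[
\Phi_T(a_0)\,DU_t(U_{T-t}(a_0)) - DU_t(U_{T-t}(a_0))\,DN(U_{T-t}(a_0)) = \partial DU_t(U_{T-t}(a_0))\,N(U_{T-t}(a_0)),
\]
followed by an integration by parts that produces the two exponential factors and the $DU_\tau(U_{T-\tau}(a_0))$ term, up to a residual $\Lambda_{\tau,T}(a_0)=\cO(\tau^3)$. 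This identity is the mechanism that converts the $P_{t,\tau,T}$ contribution into the bracketed expression in~\eqref{eq:expansion of varphi_tau,T}; without it, matching the $\tau^2$ coefficients as you propose would recover only a Taylor truncation, not the exponential closed form that is needed for the telescoping in~\eqref{eq:A_tau,T and varphi_tau,T}. Note also that $\|P_{t,\tau,T}\|=\cO(t)$ (not merely $\cO(1)$) by~\eqref{eq:spectral norm of D^2_phi}, so $\chi_{\tau,T}(I)=\cO(\tau^3)$, consistent with~\eqref{eq:norm varphi}; your order count for $\chi$ is one power of $\tau$ too pessimistic.
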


The following result provides an explicit input synthesis along with an estimate of the endpoint error.

\begin{theorem}\label{thm:forward nominal-state synthesis step function}
   Let $p\in [2, \infty]$ and $(a_0,\, a_1) \in L^p(\mathbb{R}^d)^2$. Let $T\ge\tau$ where $\tau>0$ be such that the spectral condition~\eqref{eq:SC forward nominal-state synthesis} is satisfied. Define $I_{fn}\in L^p(\R^d)$ by
   \begin{equation}\label{eq:approximation of the forward nominal-state synthesis input}
       I_{fn} = \left[e^{\tau DN(U_T(a_0))}-\idty\right]^{-1}DN(U_T(a_0))(a_1-U_T(a_0)).
   \end{equation}
   Then, the solution $a_\tau(\cdot)$ to~\eqref{eq::nonlinear control flow notation} corresponding to the step-function input $\tilde{I}_{sf,\tau}:[0, T]\to L^p(\R^d)$,
   \begin{equation}
       \tilde{I}_{sf,\tau}(t)=\begin{cases}
           0&\quad\text{if}\quad 0\le t\le T-\tau\\
           I_{fn}&\quad\text{if}\quad T-\tau< t\le T
       \end{cases}
   \end{equation}
   satisfies the following endpoint error estimates
   \begin{equation}
       \|a_\tau(T)-a_1\|_p=\cO(\tau^2),\quad\text{as}\quad\tau\to 0.
   \end{equation}
\end{theorem}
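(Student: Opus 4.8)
The plan is to start from the implicit characterization of Theorem~\ref{thm:forward nominal-state synthesis}: the exact step-function input $I$ that achieves $a_\tau(T)=a_1$ satisfies $\left[\idty-\cA_{\tau,T}(a_0)\varphi_{\tau,T}(I)\right]I=\cA_{\tau,T}(a_0)(a_1-U_T(a_0))$, and to compare it with the explicit candidate $I_{fn}$ of~\eqref{eq:approximation of the forward nominal-state synthesis input}. The first step is to rewrite $I_{fn}$ in terms of $\cA_{\tau,T}(a_0)$: using the definition~\eqref{eq:operator B_T} together with the identity $\left[DV_\tau(U_T(a_0))\right]^{-1}=DU_\tau(U_{T-\tau}(a_0))$ (a consequence of~\eqref{inverse of the differential of the flow} with $a_0$ replaced by $U_{T-\tau}(a_0)$ and $t=\tau$), one sees that $\cA_{\tau,T}(a_0) = DV_\tau(U_T(a_0))\left[\idty-e^{-\tau\Phi_T(a_0)}\right]^{-1}\Phi_T(a_0)$ where $\Phi_T(a_0)=DN(U_T(a_0))$. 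Since $\left[e^{\tau\Phi_T(a_0)}-\idty\right]^{-1}\Phi_T(a_0) = e^{-\tau\Phi_T(a_0)}\left[\idty-e^{-\tau\Phi_T(a_0)}\right]^{-1}\Phi_T(a_0)$ (valid because all these operators are functions of $\Phi_T(a_0)$ and hence commute, and the spectral condition~\eqref{eq:SC forward nominal-state synthesis} guarantees invertibility), we get the clean relation $I_{fn} = \left[DV_\tau(U_T(a_0))\right]^{-1}e^{-\tau\Phi_T(a_0)}\,\cA_{\tau,T}(a_0)(a_1-U_T(a_0))$. Feeding this through Proposition~\ref{pro:expansion of varphi_tau,T}, equation~\eqref{eq:A_tau,T and varphi_tau,T}, we recognize that $I_{fn}$ is exactly what one obtains from the implicit equation if one replaces the operator $\idty-\cA_{\tau,T}(a_0)\varphi_{\tau,T}(I)$ by its leading term $DV_\tau(U_T(a_0))e^{-\tau\Phi_T(a_0)}$.

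The second step is to bound the discrepancy. Let $a_\tau(\cdot)$ denote the solution of~\eqref{eq::nonlinear control flow notation} driven by $\tilde I_{sf,\tau}$, i.e. with the amplitude $I_{fn}$ on $(T-\tau,T]$. By Proposition~\ref{pro:expansion for forward nominal-state synthesis} applied with $I=I_{fn}$, together with the computation in the proof of Theorem~\ref{thm:forward nominal-state synthesis}, one has $a_\tau(T)-a_1 = \cA_{\tau,T}(a_0)^{-1}$-type manipulations; more directly, the exact endpoint satisfies
\begin{equation*}
a_\tau(T) = U_T(a_0) + \left[\idty-\cA_{\tau,T}(a_0)\varphi_{\tau,T}(I_{fn})\right]\cA_{\tau,T}(a_0)^{-1}I_{fn},
\end{equation*}
so that $a_\tau(T)-a_1 = \bigl(\left[\idty-\cA_{\tau,T}(a_0)\varphi_{\tau,T}(I_{fn})\right] - \idty\bigr)\cA_{\tau,T}(a_0)^{-1}I_{fn} + \bigl(\idty\cdot\text{correction}\bigr)$; the point is that plugging the explicit $I_{fn}$ back in leaves a residual governed by $\cA_{\tau,T}(a_0)\varphi_{\tau,T}(I_{fn})$. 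Now $\varphi_{\tau,T}(I_{fn})$ itself is $\cO(\tau^2)$ in operator norm: from~\eqref{eq:expansion of varphi_tau,T} the bracket $\left[e^{-\tau\Phi_T(a_0)}DU_\tau(U_{T-\tau}(a_0))-\idty\right]$ is $\cO(\tau)$ because $\frac{d}{dt}\big|_{t=0}\bigl(e^{-t\Phi_T(a_0)}DU_t(U_{T-t}(a_0))\bigr)$ need not vanish — wait, one must check the order carefully: $DU_t(U_{T-t}(a_0))$ at $t=0$ equals $\idty$ and its $t$-derivative at $0$ is $DN(U_T(a_0))=\Phi_T(a_0)$, which exactly cancels the derivative of $e^{-t\Phi_T(a_0)}$, so in fact $e^{-\tau\Phi_T(a_0)}DU_\tau(U_{T-\tau}(a_0))-\idty = \cO(\tau^2)$, and since $\left[e^{\tau\Phi_T(a_0)}-\idty\right]\Phi_T(a_0)^{-1}=\cO(\tau)$, we conclude $\varphi_{\tau,T}(I_{fn}) = \cO(\tau^3)$ (consistent with~\eqref{eq:expansion of varphi_tau,T}). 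Hence $\cA_{\tau,T}(a_0)\varphi_{\tau,T}(I_{fn})=\cO(\tau^2)$, because $\cA_{\tau,T}(a_0)=\cO(\tau^{-1})$ (the factor $\left[\idty-e^{-\tau\Phi_T(a_0)}\right]^{-1}\Phi_T(a_0)=\cO(\tau^{-1})$).

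The third step assembles the estimate. We have $a_\tau(T)-a_1 = \cA_{\tau,T}(a_0)\varphi_{\tau,T}(I_{fn})\,I_{fn} + \text{(exact algebraic remainder)}$, where the exact algebraic remainder comes from the difference between the true implicit operator and its leading-order replacement used to define $I_{fn}$; by~\eqref{eq:A_tau,T and varphi_tau,T} this difference is $\cO(\tau^2)$ in operator norm. Since $I_{fn}$ has norm $\cO(\tau^{-1})\|a_1-U_T(a_0)\|_p$ (again from the factor $\left[e^{\tau\Phi_T(a_0)}-\idty\right]^{-1}\Phi_T(a_0)$ in~\eqref{eq:approximation of the forward nominal-state synthesis input}), the product $\cO(\tau^2)\cdot\cO(\tau^{-1})$ gives only $\cO(\tau)$, which is not good enough — so one must be more careful and track that the $\cO(\tau^2)$ operator actually acts on $a_1-U_T(a_0)$ directly (through $\cA_{\tau,T}(a_0)^{-1}I_{fn} = \cO(1)$ times $(a_1-U_T(a_0))$, not through $I_{fn}$ itself). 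Concretely, writing $I_{fn}=\left[DV_\tau(U_T(a_0))e^{-\tau\Phi_T(a_0)}\right]^{-1}\cdot\cA_{\tau,T}(a_0)(a_1-U_T(a_0))$ exactly, and substituting into the implicit identity, yields $a_\tau(T)-a_1 = \Bigl(\idty - \left[\idty-\cA_{\tau,T}(a_0)\varphi_{\tau,T}(I_{fn})\right]\left[DV_\tau(U_T(a_0))e^{-\tau\Phi_T(a_0)}\right]^{-1}\Bigr)(a_1-U_T(a_0))$, and by~\eqref{eq:A_tau,T and varphi_tau,T} together with $\varphi_{\tau,T}(I_{fn})=\cO(\tau^3)$ and $\cA_{\tau,T}(a_0)=\cO(\tau^{-1})$, the operator in the outer parentheses is $\cO(\tau^2)$; hence $\|a_\tau(T)-a_1\|_p = \cO(\tau^2)\|a_1-U_T(a_0)\|_p = \cO(\tau^2)$.

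\textbf{Main obstacle.} The delicate point is the bookkeeping of orders in $\tau$: the operator $\cA_{\tau,T}(a_0)$ blows up like $\tau^{-1}$, the input amplitude $I_{fn}$ blows up like $\tau^{-1}$, while $\varphi_{\tau,T}(I_{fn})$ is only $\cO(\tau^3)$, and one must route every $\cO(\tau^{-1})$ factor onto a quantity that is $\cO(1)$ rather than letting two singular factors multiply. This requires expressing $I_{fn}$ exactly (not just asymptotically) in terms of $\cA_{\tau,T}(a_0)(a_1-U_T(a_0))$, so that $\cA_{\tau,T}(a_0)^{-1}I_{fn}$ is a benign $\cO(1)$ operator applied to $a_1-U_T(a_0)$; the $\cO(\tau^2)$ error in~\eqref{eq:A_tau,T and varphi_tau,T} then lands directly on $a_1-U_T(a_0)$. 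A secondary technical check is that all the operator exponentials, resolvents $\left[\idty-e^{-\tau\Phi_T(a_0)}\right]^{-1}$, and differentials commute as functions of $\Phi_T(a_0)=DN(U_T(a_0))$, and that the spectral condition~\eqref{eq:SC forward nominal-state synthesis} keeps $\left[e^{\tau\Phi_T(a_0)}-\idty\right]^{-1}$ bounded by $\cO(\tau^{-1})$ uniformly as $\tau\to0$ — this last bound uses that $0$ is an isolated point of $\sigma(\Phi_T(a_0))$ with the resolvent behaving like $\tau^{-1}$ near it, or more simply that $\left[e^{\tau\Phi_T(a_0)}-\idty\right]^{-1}\Phi_T(a_0) = \tau^{-1}\bigl(\idty + \cO(\tau)\bigr)$ on the whole space by the holomorphic functional calculus.
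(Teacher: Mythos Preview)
Your route through Theorem~\ref{thm:forward nominal-state synthesis} and the operator $\cA_{\tau,T}(a_0)$ is more circuitous than the paper's proof, which simply substitutes $I_{fn}$ into the expansion~\eqref{eq:expansion for forward nominal-state synthesis} and uses~\eqref{eq:expansion of varphi_tau,T} to simplify the result to $a_1+\cO(\tau^2)$ in three lines. More importantly, your execution contains genuine operator-ordering errors that, as written, invalidate the argument.

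The central slip is in your ``secondary technical check'': the differentials $DV_\tau(U_T(a_0))$ and $DU_\tau(U_{T-\tau}(a_0))$ are \emph{not} functions of $\Phi_T(a_0)=DN(U_T(a_0))$, so they do \emph{not} commute with $e^{\pm\tau\Phi_T(a_0)}$ or $\bigl[\idty-e^{-\tau\Phi_T(a_0)}\bigr]^{-1}$. This contaminates several of your ``exact'' identities. First, the clean relation: from $\cA_{\tau,T}(a_0)=DV_\tau(U_T(a_0))\bigl[\idty-e^{-\tau\Phi_T(a_0)}\bigr]^{-1}\Phi_T(a_0)$ one gets $\bigl[DV_\tau(U_T(a_0))\bigr]^{-1}\cA_{\tau,T}(a_0)=\bigl[\idty-e^{-\tau\Phi_T(a_0)}\bigr]^{-1}\Phi_T(a_0)$, hence the correct identity is
\[
I_{fn}=e^{-\tau\Phi_T(a_0)}\,\bigl[DV_\tau(U_T(a_0))\bigr]^{-1}\cA_{\tau,T}(a_0)\bigl(a_1-U_T(a_0)\bigr),
\]
with $e^{-\tau\Phi_T(a_0)}$ on the \emph{left}; your version with the factors swapped (and the later rewriting as $\bigl[DV_\tau e^{-\tau\Phi_T(a_0)}\bigr]^{-1}\cA_{\tau,T}(a_0)$) is false. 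Second, from~\eqref{eq:expansion for forward nominal-state synthesis} one reads off
\[
a_\tau(T)-U_T(a_0)=\cA_{\tau,T}(a_0)^{-1}\bigl[\idty-\cA_{\tau,T}(a_0)\varphi_{\tau,T}(I_{fn})\bigr]I_{fn},
\]
with $\cA_{\tau,T}(a_0)^{-1}$ on the \emph{left}; you put it on the right. Your final displayed formula for $a_\tau(T)-a_1$ then implicitly assumes $\cA_{\tau,T}(a_0)^{-1}\,B\,\cA_{\tau,T}(a_0)=B$, which again fails.

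With the orderings corrected, your strategy \emph{does} go through: inserting the two corrected identities and using $\idty-\cA_{\tau,T}(a_0)\varphi_{\tau,T}(I_{fn})=DV_\tau(U_T(a_0))e^{\tau\Phi_T(a_0)}+\cO(\tau^2)$ (note the sign in the exponent) gives
\[
a_\tau(T)-U_T(a_0)=\cA_{\tau,T}(a_0)^{-1}\Bigl(\idty+\cO(\tau^2)\Bigr)\cA_{\tau,T}(a_0)\bigl(a_1-U_T(a_0)\bigr),
\]
and since $\|\cA_{\tau,T}(a_0)^{-1}\|=\cO(\tau)$ and $\|\cA_{\tau,T}(a_0)\|=\cO(\tau^{-1})$, the conjugated $\cO(\tau^2)$ stays $\cO(\tau^2)$. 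The paper avoids all of this by working directly with~\eqref{eq:expansion for forward nominal-state synthesis}: the series there sums to $\Phi_T(a_0)^{-1}\bigl[\idty-e^{-\tau\Phi_T(a_0)}\bigr]DU_\tau(U_{T-\tau}(a_0))I_{fn}$, and combining with the leading part of $\varphi_{\tau,T}(I_{fn})I_{fn}$ from~\eqref{eq:expansion of varphi_tau,T} the $DU_\tau$ factors cancel algebraically, leaving exactly $a_1-U_T(a_0)$ plus an $\cO(\tau^3)\cdot I_{fn}=\cO(\tau^2)$ remainder.
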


\begin{remark}
We emphasize that a constant-in-time input candidate on $[0, T]$ corresponds to the special case $T = \tau$ in Theorem~\ref{thm:forward nominal-state synthesis step function}. The notation $I_{fn}$ in~\eqref{eq:approximation of the forward nominal-state synthesis input} stands for \emph{forward nominal} synthesis, where ``nominal'' refers to the evaluation of $DN(\cdot)$ along the nominal trajectory $U_t(a_0)|_{t = T}$ of~\eqref{eq::nonlinear control flow notation}.
% , i.e., the solution corresponding to $I = 0$, evaluated at the final time.
\end{remark}

\begin{proof}[\textit{Proof} of Theorem~\ref{thm:forward nominal-state synthesis step function}]
    Using~\eqref{eq:expansion for forward nominal-state synthesis}, \eqref{eq:expansion of varphi_tau,T}, and the definition of $I_{fn}$ from~\eqref{eq:approximation of the forward nominal-state synthesis input}, we obtain
    \begin{eqnarray}\label{eq:solution proof}
        a_\tau(T)&=&  U_T(a_0)-\sum_{n=0}^{\infty}\frac{(-\tau)^{n+1}\Phi_T(a_0)^n}{(n+1)!}DU_\tau(U_{T-\tau}(a_0))I_{fn}-\varphi_{\tau,T}(I_{fn})I_{fn}\nonumber\\
        % &\underset{\tau \sim 0}{=}&U_T(a_0)+\left[\idty-e^{-\tau \Phi_T(a_0)}\right]\Phi_T(a_0)^{-1}DU_\tau(U_{T-\tau}(a_0))I_{fn}\nonumber\\
        % &&-\left[e^{\tau \Phi_T(a_0)}-\idty\right]\Phi_T(a_0)^{-1}\left[e^{-\tau \Phi_T(a_0)}DU_\tau(U_{T-\tau}(a_0))-\idty\right]I_{fn}+\cO(\tau^2)\nonumber\\
        &\underset{\tau \sim 0}{=}&\left[e^{\tau \Phi_T(a_0)}-\idty\right]\Phi_T(a_0)^{-1}\left[e^{\tau \Phi_T(a_0)}-\idty\right]^{-1}\Phi_T(a_0)(a_1-U_T(a_0))+\cO(\tau^2)
        \underset{\tau \sim 0}{=}a_1+\cO(\tau^2)
    \end{eqnarray}
    since $\Phi_T(a_0)^{-1}$ commutes with $e^{\pm\tau \Phi_T(a_0)}$, where $\Phi_T(a_0):=DN(U_T(a_0))$. Moreover, $\cO(\tau^3)$ term in~\eqref{eq:expansion of varphi_tau,T} becomes $\cO(\tau^2)$ in~\eqref{eq:solution proof} due to $
\left[e^{\tau \Phi_T(a_0)}-\idty\right]^{-1}\, \cO(\tau^3) \underset{\tau \sim 0}{=} \cO(\tau^2)$,
since $\Phi_T(a_0)$ is uniformly bounded with respect to $T$ and $a_0$ so that $\left[e^{\tau \Phi_T(a_0)}-\idty\right]^{-1}\underset{\tau \sim 0}{=}\cO(\tau^{-1})$.
\end{proof}

% {\color{red!80!magenta}
% \begin{remark}
% Theorem~\ref{thm:forward nominal-state synthesis step function} can be viewed as the natural extension of Proposition~\ref{pro:linear case} to the nonlinear setting. It provides an accurate and numerically implementable constant-in-time input that achieves the control objective over a small time horizon $T = \tau > 0$.
% \end{remark}
% }

\begin{remark}
    Observe that \eqref{eq:A_tau,T and varphi_tau,T} ensures that \eqref{eq:implicit forward nominal-state synthesis} admits the solution $I\in L^p(\R^d)$ given by
    \begin{eqnarray*}
I&=&e^{\tau\Phi_T(a_0)}\left[DV_\tau(U_T(a_0))\right]^{-1}\cA_{\tau,T}(a_0)\left(a_1-U_T(a_0)\right)+\cO(\tau^2)\qquad\text{as}\qquad\tau\to 0\nonumber\\
        &=&I_{fn}+\cO(\tau^2)\qquad\text{as}\qquad\tau\to 0
    \end{eqnarray*}
    where $I_{fn}\in L^p(\R^d)$ is given by \eqref{eq:approximation of the forward nominal-state synthesis input}.
\end{remark}

\section{Generic syntheses in practice}\label{s:Generic syntheses in practice}

Building on the implicit forward nominal-state synthesis and its explicit approximation developed in Section~\ref{sss:implicit forward nominal-state synthesis}, this section extends the analysis to the forward final-state, backward initial-state, and backward nominal-state formulations. Section~\ref{ss:On the spectral conditions} then discusses the spectral conditions needed for these syntheses and shows that they are naturally satisfied in many practical scenarios.

From here on, we focus exclusively on constant-in-time input syntheses, corresponding to the specific case $T = \tau > 0$ as defined in Proposition~\ref{pro:expansion for forward nominal-state synthesis} and Section~\ref{sss:implicit forward nominal-state synthesis}.

\subsection{Explicit approximations for the remaining syntheses}\label{ss:Complementarity of the zero-order approximation}

This section provides explicit approximations for the forward final-state, backward initial-state, and backward nominal-state syntheses, derived from the solution expansions in~\eqref{eq:forward2}, \eqref{eq:backward1}, and~\eqref{eq:backward2}. 
% These complement the forward nominal-state case and highlight why all four constant-in-time inputs are worth considering in practice. 
The next proposition summarizes these three syntheses. The proof follows the same lines as Theorem~\ref{thm:forward nominal-state synthesis step function} and is omitted for brevity.

\begin{proposition}\label{pro:the three other syntheses}
    Let $p\in[2, \infty]$, $(a_0,\, a_1) \in L^p(\mathbb{R}^d)^2$ and $T>0$. Then, one has:
    \begin{enumerate}
        \item If the following spectral condition is satisfied
         \begin{equation}\label{eq:SC forward synthesis}
         \sigma(DN(a_1))\cap\left\{i \frac{2 \pi \ell}{T} \in \mathbb{C} \mid\ell\in\Z\right\}=\emptyset
    \end{equation}
    then, the solution $a_f(\cdot)$ to~\eqref{eq::nonlinear control flow notation} corresponding to the input $I_f\in L^p(\R^d)$ given by
    \begin{equation}\label{eq:forward synthesis}
        I_f = \left[e^{TDN(a_1)}-\idty\right]^{-1}DN(a_1)(a_1-U_T(a_0))
    \end{equation}
    satisfies $\|a_f(T)-a_1\|_p =  \cO(T^2)$ as $T\to 0$.
     \item If the following spectral condition is satisfied
         \begin{equation}\label{eq:SC backward synthesis}
         \sigma(DN(a_0))\cap\left\{i \frac{2 \pi \ell}{T} \in \mathbb{C} \mid\ell\in\Z\right\}=\emptyset
    \end{equation}
    then, the solution $a_b(\cdot)$ to~\eqref{eq::nonlinear control flow notation} corresponding to the input $I_b\in L^p(\R^d)$ given by
    \begin{equation}\label{eq:backward synthesis}
        I_b = \left[\idty-e^{-TDN(a_0)}\right]^{-1}DN(a_0)(V_T(a_1)-a_0)
    \end{equation}
    satisfies $\|a_b(T)-a_1\|_p =  \cO(T^2)$ as $T\to 0$.
     \item If the following spectral condition is satisfied
         \begin{equation}\label{eq:SC backward nominal-state synthesis}
         \sigma(DN(V_T(a_1)))\cap\left\{i \frac{2 \pi \ell}{T} \in \mathbb{C} \mid\ell\in\Z\right\}=\emptyset
    \end{equation}
    then, the solution $a_{bn}(\cdot)$ to~\eqref{eq::nonlinear control flow notation} corresponding to the input $I_{bn}\in L^p(\R^d)$ given by
    \begin{equation}\label{eq:backward nominal-state synthesis}
        I_{bn} = \left[\idty - e^{-T DN(V_T(a_1))} \right]^{-1} DN(V_T(a_1))(V_T(a_1)-a_0)
    \end{equation}
    satisfies $\|a_{bn}(T)-a_1\|_p = \cO(T^2)$ as $T\to 0$.
    \end{enumerate}
\end{proposition}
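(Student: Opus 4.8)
The plan is to reproduce, for each of the three strategies, the argument that proves Theorem~\ref{thm:forward nominal-state synthesis step function}, with the forward nominal-state expansion~\eqref{eq:expansion for forward nominal-state synthesis} replaced by the appropriate expansion from Remark~\ref{rmk:SES}: \eqref{eq:forward2} for the forward final-state synthesis, \eqref{eq:backward1} for the backward initial-state synthesis, and~\eqref{eq:backward2} for the backward nominal-state synthesis. Specializing to $T=\tau$, these represent $a_f(T)$, $V_T(a_b(T))$, and $V_T(a_{bn}(T))$, respectively, as norm-convergent series of the form ``a leading operator applied to $I$'' minus ``a remainder operator $\beta_{\tau,T}(I)$, $\eta_{\tau,T}(I)$, or $\zeta_{\tau,T}(I)$ applied to $I$''. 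In each case the series $\sum_{n\ge 0}\frac{T^{n+1}M^{n}}{(n+1)!}$ in the leading part sums to $(e^{TM}-\idty)M^{-1}$ for the relevant $M\in\{DN(a_1),\,DN(a_0),\,DN(V_T(a_1))\}\subset\mathscr{L}(L^p(\R^d))$ (by~\eqref{eq:operator norm of DN}), and by the spectral mapping theorem $\sigma(e^{TM})=e^{T\sigma(M)}$ the spectral conditions~\eqref{eq:SC forward synthesis}, \eqref{eq:SC backward synthesis}, \eqref{eq:SC backward nominal-state synthesis} guarantee that $e^{TM}-\idty$ is boundedly invertible---and that $M$ itself is invertible, since $0$ belongs to the excluded set $\{i2\pi\ell/T\mid\ell\in\Z\}$. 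Hence $I_f$, $I_b$, $I_{bn}$ in~\eqref{eq:forward synthesis}--\eqref{eq:backward nominal-state synthesis} are well-defined in $L^p(\R^d)$, and each is $\cO(T^{-1})$ as $T\to 0$, because $(e^{TM}-\idty)^{-1}M=\tfrac1T\idty+\cO(1)$ while $a_1-U_T(a_0)$ and $V_T(a_1)-a_0$ remain $\cO(1)$.

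The heart of the proof, as in Section~\ref{sss:implicit forward nominal-state synthesis}, is the control of the remainders. First I would establish the analogues of Proposition~\ref{pro:expansion of varphi_tau,T}: expanding $U_t$, $V_t$, their Fréchet derivatives $DU_t$, $DV_t$, and the relevant Gâteaux derivatives about $t=0$ via~\eqref{eq::nonlinear flow}, \eqref{eq::nonlinear flow psi}, \eqref{eq:equation derivative of the flow}, and invoking the uniform bound~\eqref{eq:operator norm of DN} together with the second Fréchet-derivative estimates of Proposition~\ref{pro:second differential of N}, one obtains $\beta_{\tau,T}(I),\eta_{\tau,T}(I),\zeta_{\tau,T}(I)=\cO(T^{3})$ as $T\to 0$ uniformly for bounded $I$, once the would-be $\cO(T^{2})$ contributions cancel as they do for $\varphi_{\tau,T}$ in~\eqref{eq:expansion of varphi_tau,T}. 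Substituting $I=I_f$ and telescoping the leading operators then gives $a_f(T)=a_1+\cO\!\left(T\|a_f(T)-a_1\|\right)+\cO(T^{2})$---the first error from replacing the implicit $DN(a_f(T))$ of~\eqref{eq:forward2} by $DN(a_1)$ in $I_f$, the second from $\beta_{\tau,T}(I_f)I_f=\cO(T^{3})\cO(T^{-1})=\cO(T^{2})$---whence a routine absorption argument for $T$ small yields $\|a_f(T)-a_1\|_p=\cO(T^{2})$. The backward initial-state case is cleaner, since $DN(a_0)$ in $I_b$ is evaluated at the known initial state, so no self-consistency is needed and one gets $V_T(a_b(T))=V_T(a_1)+\cO(T^{2})$ directly; the backward nominal-state case combines both features, replacing the implicit $DN(V_T(a_{bn}(T)))$ in~\eqref{eq:backward2} by $DN(V_T(a_1))$ in $I_{bn}$ and absorbing the extra factor $DV_T(a_{bn}(T))$ of~\eqref{eq:backward2} through $DV_T(x)=\idty-T\,DN(x)+\cO(T^{2})$. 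Finally, for the two backward syntheses the endpoint estimate follows from $a_{\bullet}(T)=U_T(V_T(a_{\bullet}(T)))$ and the global Lipschitz continuity of $U_T$ (with constant bounded uniformly for $T$ in a bounded interval, by~\eqref{eq:operator norm of DN} and Gronwall): $\|a_{\bullet}(T)-a_1\|_p=\|U_T(V_T(a_{\bullet}(T)))-U_T(V_T(a_1))\|_p=\cO(T^{2})$.

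I expect the main obstacle to be precisely that first step: extracting the explicit integral forms of $\beta_{\tau,T}$, $\eta_{\tau,T}$, $\zeta_{\tau,T}$ from the successive integrations by parts sketched in Remark~\ref{rmk:SES} (the counterparts of $\kappa_{\tau,T}$, $\chi_{\tau,T}$ in~\eqref{eq:kappa and chi}) and verifying the $\cO(T^{2})$-cancellation that leaves only the $\cO(T^{3})$ remainder; this cancellation is what separates an $\cO(T^{2})$ endpoint error from a mere $\cO(T)$ one, and it is where the second-order estimates on $N$ from Proposition~\ref{pro:second differential of N}---hence the restriction $p\in[2,\infty]$, cf.\ Remark~\ref{rmk:on the range of p}---are used. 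A secondary, routine point is that in each telescoping step the relevant $M$ and the associated exponentials, inverses, and entire-function expressions lie in one commutative subalgebra, so the algebraic identities of the proof of Theorem~\ref{thm:forward nominal-state synthesis step function} carry over. (That the spectral conditions themselves hold whenever $\mu<\mu_0$---or $\mu<\mu_1$ when $p=2$---is addressed separately in Section~\ref{ss:On the spectral conditions}, following Lemma~\ref{lem:on the SA in the linear case}, Remark~\ref{rmk:on the SA in the nonlinear settings}, and the bound of Lemma~\ref{lem:norm of exp A_t(U_t(a0))}.)
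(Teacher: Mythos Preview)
Your proposal is correct and matches the paper's approach: the paper itself states only that ``the proof follows the same lines as Theorem~\ref{thm:forward nominal-state synthesis step function} and is omitted for brevity,'' and your plan---using the expansions~\eqref{eq:forward2}, \eqref{eq:backward1}, \eqref{eq:backward2} of Remark~\ref{rmk:SES}, establishing remainder estimates analogous to Proposition~\ref{pro:expansion of varphi_tau,T}, and concluding via the algebra of Theorem~\ref{thm:forward nominal-state synthesis step function}---is exactly that. Your identification of the extra ingredients needed beyond the forward nominal case (the self-consistency absorption for~\eqref{eq:forward2} and~\eqref{eq:backward2}, and the transfer from $V_T(a_\bullet(T))$ to $a_\bullet(T)$ via the Lipschitz bound on $U_T$ for the backward syntheses) is correct and fills in the details the paper leaves implicit.
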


\begin{remark}
The four synthesized inputs $I_{fn}$, $I_f$, $I_b$, and $I_{bn}$—defined respectively by equations~\eqref{eq:approximation of the forward nominal-state synthesis input}, \eqref{eq:forward synthesis}, \eqref{eq:backward synthesis}, and~\eqref{eq:backward nominal-state synthesis}—never coincide in general. This can be seen even in the trivial case where the initial state $a_0$ is an equilibrium of the drift dynamics $\dot{a}(t) = N(a(t))$, i.e., $N(a_0) = 0$.

In this situation, it is clear that $U_T(a_0) = a_0$ and $DN(U_T(a_0)) = DN(a_0)$ so that $I_b$ and $I_{bn}$ remain as defined in~\eqref{eq:backward synthesis} and~\eqref{eq:backward nominal-state synthesis}, while the forward nominal-state and final-state syntheses recast
\[
I_{fn} = \left[e^{T DN(a_0)} - \idty \right]^{-1} DN(a_0)(a_1 - a_0), 
\quad 
I_f = \left[e^{T DN(a_1)} - \idty \right]^{-1} DN(a_1)(a_1 - a_0).
\]
This clearly shows that the four control inputs are distinct. The only overlap occurs in the spectral conditions: $I_{fn}$ and $I_b$ are both well-defined under the same spectral condition~\eqref{eq:SC backward synthesis}, which is generally not the case when $N(a_0) \neq 0$ (see Proposition~\ref{pro:distinction of the spectrums} below).

Furthermore, if $a_1$ is also an equilibrium\footnote{This scenario is of practical interest---for example, when the goal is to drive the neural field from an unstable equilibrium $a_0$ to a stable equilibrium $a_1$ using constant in time input.} of the drift, then $V_T(a_1) = a_1$, $DN(V_T(a_1)) = DN(a_1)$, and
\[
I_b = \left[\idty - e^{-T DN(a_0)}\right]^{-1} DN(a_0)(a_1 - a_0),
\quad
I_{bn} = \left[\idty - e^{-T DN(a_1)}\right]^{-1} DN(a_1)(a_1 - a_0).
\]
Note, in particular, that $I_f$ and $I_{bn}$ are now well-defined under the same spectral condition~\eqref{eq:SC forward synthesis}. Altogether, even in this special case where both $a_0$ and $a_1$ are equilibria of the drift, we still have $I_{fn} \neq I_f \neq I_b \neq I_{bn}.$
% \[
% I_{fn} \neq I_f \neq I_b \neq I_{bn}.
% \]
\end{remark}

\subsection{On the spectral conditions and beyond}\label{ss:On the spectral conditions}

Here, we analyze the spectral conditions underlying the validity of each synthesis strategy. 
% We show that these conditions are often automatically satisfied in practice, thereby reinforcing the applicability of the proposed inputs across a wide range of scenarios.
To begin this section, we state the following informative remark.

\begin{remark}\label{rmk:useful}
    It follows from Lemma~\ref{lem:on the SC} (let $t=T$ in~\eqref{eq:link between the SAs abstract}) that
    \begin{equation}\label{eq:useful}
          DN(U_T(a)) 
= DU_T(a) \, DN(a) \, DU_T(a)^{-1} 
+ \partial DU_T(a) \, N(a) \, DU_T(a)^{-1},\quad a\in L^p(\R^d).
    \end{equation}
    Then, $B_T:=DU_T(a) \, DN(a) \, DU_T(a)^{-1}$ is similar to $DN(a)$ so that $\sigma(B_T)=\sigma(DN(a))$. Next, the operator $\partial DU_T(a) \, N(a) \, DU_T(a)^{-1}$ vanishes identically when either $N(a) = 0$ or $T = 0$, since $\partial DU_T(a)\big|_{T=0} = 0$. Therefore, we deduce that for any $a \in L^p(\mathbb{R}^d)$ with $N(a) \neq 0$, one may have
    \[
    \sigma(DN(U_T(a))) \neq \sigma(DN(a)), \qquad T > 0.
    \]

    Nevertheless, the following result shows that these spectra remain close in the vicinity of $T\sim 0$, as established through perturbation theory for bounded linear operators~\cite{kato2013perturbation}. % \cite[p.~208]{kato2013perturbation} and the fact that
    
\end{remark}

\begin{proposition}\label{pro:distinction of the spectrums}
    Let $p \in [2, \infty]$, $a \in L^p(\mathbb{R}^d)$, and $T\ge 0$. Then, the following holds
    \begin{equation}\label{eq:perturbation of the spectrum}
        \sigma(DN(U_T(a)))\subseteq \sigma(DN(a))+\cO(T)\quad\text{as}\quad T\to 0.
    \end{equation}
    In particular, $\sigma(DN(U_T(a)))=\sigma(DN(a))$, if, either $N(a) = 0$ or $T = 0$.
\end{proposition}
% {\color{blue}
\begin{proof}
Let $A_T:=DN(U_T(a))\in\mathscr{L}(L^p(\R^d))$. By~\eqref{eq:useful},
\[
A_T=B_T+E_T,\qquad 
B_T:=DU_T(a)\,DN(a)\,DU_T(a)^{-1},\qquad 
E_T:=\partial DU_T(a)\,N(a)\,DU_T(a)^{-1}.
\]
Since $B_T$ is similar to $DN(a)$, one has $\sigma(B_T)=\sigma(DN(a))$. Moreover, by
Lemma~\ref{lem:general estimates flows} (let $\gamma(t)=t$ and $\beta(t)=a$ in~\eqref{eq:spectral norm of D^2_phi} and evaluate the result at $t=T$),
\[
\|A_T-B_T\|=\|E_T\|=\mathcal{O}(T)\qquad\text{as }T\to0,
\]
% Fix $T>0$ and write $\|E_T\|=\mathcal O(T)$ as follows: 
that is, there exist $C>0$ and $T_0>0$ such that
$\|E_T\|\le CT$ for all $0<T<T_0$. 
Set\footnote{We denote by $\dist(z,{\rm S}):=\inf_{y\in{\rm S}}|z-y|$ the distance of $z$ from a subset ${\rm S}\subset\C$.} $\Gamma_{CT}:=\{z\in\C:\dist(z,\sigma(B_T))\ge CT\}\subset\rho(B_T)$ and define
\[
\delta_{CT}:=\min_{z\in\Gamma_{CT}}\|(B_T-z)^{-1}\|^{-1}>0.
\]
Since $\delta_{CT}>0$ (for each fixed $T$) and $\|E_T\|\to0$ as $T\to0$,
there exists $T_1\in(0,T_0)$ such that $\|E_T\|<\delta_{CT}$ for all $0<T<T_1$.
Therefore, \cite[Remark~3.2]{kato2013perturbation} implies that
$\Gamma_{CT}\subset\rho(A_T)$, and~\cite[Remark~3.3]{kato2013perturbation} yields
\[
\sigma(DN(U_T(a)))=\sigma(A_T)\subset\{z\in\C:\dist(z,\sigma(DN(a)))<CT\}.
\]
which is precisely \eqref{eq:perturbation of the spectrum}. The last part of the proposition follows from Remark~\ref{rmk:useful}.
\end{proof}
% }

Given the relationships among these various spectral conditions, the remaining question is how to verify, in practice, whether a given condition is satisfied. We recall from Lemma~\ref{lem:on the SA in the linear case} and Remark~\ref{rmk:on the SA in the nonlinear settings} that these spectral conditions are automatically satisfied when
\[
\left(\mu < \mu_0 \quad \text{if} \quad 2 \leq p \leq \infty\right) \quad \text{or} \quad \left(\mu < \mu_1 \quad \text{if} \quad p = 2\right)
\]
where the threshold parameters $0 < \mu_0 \leq \mu_1$ are introduced in~\eqref{eq:mu criticals}. The main challenge lies in verifying the spectral conditions in the critical or supercritical regime, i.e., when $\mu \geq \mu_0$ or $\mu \geq \mu_1$ (in the case $p = 2$).

In the specific case of a constant $a\in L^\infty(\R^d)$, one has the following result. %, where the proof {\color{blue}

	\begin{proposition}\label{pro:on the spectrum of DN(a) in L^2}
	Let $\omega(-x) = \omega(x)$ for all $x\in\R^d$.
%     \begin{equation}\label{eq:symmetric assumption on omega}
%     \omega(-x) = \omega(x), \qquad \forall x \in \mathbb{R}^d.
% \end{equation}
    If $\overline{a} \in L^\infty(\mathbb{R}^d)$ is constant (e.g., the homogeneous equilibrium of $\dot{a}(t) = N(a(t))$), then $DN(\overline{a}) \in \mathscr{L}(L^2(\mathbb{R}^d))$ is self-adjoint, and its spectrum is given by
		\begin{equation}\label{eq:on the spectrum of DN(a) in L^2 when a=0}
			\sigma(DN(\overline{a})) 
			= \left[-\alpha + \mu f'(\overline{a}) \min_{\xi \in \mathbb{R}^d} \widehat{\omega}(\xi), \,
			-\alpha + \mu f'(\overline{a}) \max_{\xi \in \mathbb{R}^d} \widehat{\omega}(\xi)\right] \subset \mathbb{R}.
		\end{equation}
\end{proposition}
% {\color{blue}
\begin{proof}
For every $a\in L^\infty(\R^d)$, the operator $DN(a)\in\mathscr{L}(L^2(\R^d))$ is well-defined since $f'$ is bounded by Assumption~\ref{ass:general assumption}. If $a(x)=\overline{a}$ is constant, then
  \[
DN(\overline{a})b = -\alpha b+ \mu\,f'(\overline{a}) \omega \ast b,\qquad b\in L^2(\R^d).
\]
Since $\omega(-x)=\omega(x)$, $DN(\overline{a})$ is self-adjoint on $L^2(\R^d)$ so that its spectrum $\sigma(DN(\overline{a}))\subset\R$. Let $\lambda\in\R\bs\sigma(DN(\overline{a}))$, then $DN(\overline{a})-\lambda\idty$ is invertible, i.e., for all $\phi\in L^2(\R^d)$, there exists a unique $b\in L^2(\R^d)$ such that $DN(\overline{a})b-\lambda b=\phi$. Taking the Fourier transform yields
\[
 (m(\xi)-\lambda)\widehat{b}(\xi)=\widehat{\phi}(\xi),\qquad m(\xi):=-\alpha+\mu f'(\overline{a})\widehat{\omega}(\xi),\qquad\forall\xi\in\R^d.
\]
In particular, $m(\cdot)-\lambda$ is a real-valued, continuous, and bounded function on $\R^d$ by Riemann–Lebesgue lemma since $\omega\in L^1(\R^d)$. Therefore,
\[
\widehat{b}(\xi) = \widehat{\phi}(\xi)/(m(\xi)-\lambda) ,\qquad\xi\in\R^d%\frac{\widehat{\phi}(\xi)}{n(\xi)},\qquad\xi\in\R^d
\]
is well-defined and belongs to $L^2(\R^d)$ if and only if $(m(\xi)-\lambda)\neq 0$, which is equivalent to
\[
\lambda\notin \left[-\alpha + \mu f'(\overline{a}) \min_{\xi \in \mathbb{R}^d} \widehat{\omega}(\xi), \,
                 -\alpha + \mu f'(\overline{a}) \max_{\xi \in \mathbb{R}^d} \widehat{\omega}(\xi)\right].
\]
This completes the proof of the proposition.
\end{proof}
% }

	\begin{remark}\label{rmk:on the spectrum of DN(a) in L^2}
		The symmetry assumption $\omega(-x) = \omega(x)$ is standard in the literature, as most neural field models on unbounded domains $\mathbb{R}$ or $\mathbb{R}^2$ typically use kernels that are both homogeneous and isotropic, meaning $\omega(x,y) = \omega(|x - y|)$ \cite{bolelli2025neural,bressloff2001geometric, tamekue2024mathematical, tamekue2025reproducibility,coombes2005waves,faye2013localized, nicks2021understanding,laing2002multiple}. This is relevant to ensure that $ \sigma(DN(a)) \subset \mathbb{R} $ when $a\in L^\infty(\R
        ^d)$ is a real constant. 
%		Even under this assumption, there is no guarantee that $ \sigma(DN(a)) \subset \mathbb{R} $ when $ a \in L^2(\R^d)$ is non-constant. The symmetry merely ensures that the operator $ K_a $ commutes with complex conjugation, so that if $ \lambda_n(a) \in \sigma(K_a) $, then its complex conjugate $ \overline{\lambda_n(a)} \in \sigma(K_a) $ as well.
			It follows from~\eqref{eq:on the spectrum of DN(a) in L^2 when a=0} that the spectral conditions~\eqref{eq:SC forward nominal-state synthesis},~\eqref{eq:SC forward synthesis},~\eqref{eq:SC backward synthesis}, and~\eqref{eq:SC backward nominal-state synthesis} are violated if and only if $ 0 \in \sigma(DN(\overline{a}))$ in the case where $a_0 = \overline{a}$, $a_1 = \overline{a}$, $U_T(a_0)=\overline{a}$ or $V_T(a_1)=\overline{a}$.  This occurs if and only if
				\[
				\mu = \frac{\alpha}{f'(\overline{a}) \lambda},\qquad \text{for some } \lambda \in \left[\min_{\xi \in \mathbb{R}^d} \widehat{\omega}(\xi),\, \max_{\xi \in \mathbb{R}^d} \widehat{\omega}(\xi)\right].
				\]
		\end{remark}

\section{Comments on the generic syntheses and perspectives}\label{ss:comments and open questions}

This section discusses the generic syntheses developed in Sections~\ref{ss:generic synthesis} and~\ref{s:Generic syntheses in practice}, outlining their complementarity, practical limitations, and possible directions for future investigation.

As already noted in Remark~\ref{rmk:on the range of p}, the restriction to the range $2 \le p \le \infty$ in the main results stated in Proposition~\ref{pro:expansion for forward nominal-state synthesis}, Theorems~\ref{thm:forward nominal-state synthesis} and~\ref{thm:forward nominal-state synthesis step function}, and Proposition~\ref{pro:the three other syntheses}, involves a subtle technical consideration. While the operator $DN(a) \in \mathscr{L}(L^p(\mathbb{R}^d))$ is indeed well-defined for all $1 < p \le 2$ by \cite[Lemma~B.8]{tamekue2024mathematical}, the operator $\varphi_{\tau,T}(I) \in \mathscr{L}(L^p(\mathbb{R}^d))$ involves the Gâteaux derivative $\partial DN$ of the Fréchet derivative $DN$ under the integral sign. However, Proposition~\ref{pro:second differential of N} only guarantees that $\partial DN(h)k \in \mathscr{L}(L^p(\mathbb{R}^d))$ for all $h,\,k \in L^p(\mathbb{R}^d)$ if $p\in[2, \infty]$. This raises an open question: is $DN$ Gâteaux differentiable on $L^p(\mathbb{R}^d)$ for $p\in(1,2)$? An affirmative answer would extend the validity of the proposed input syntheses to the full range $p\in(1, \infty]$. We refer to Remark~\ref{rmk:on L^1} for the specific case of $p=1$.

The proposed input syntheses~\eqref{eq:approximation of the forward nominal-state synthesis input}, \eqref{eq:forward synthesis}, \eqref{eq:backward synthesis}, and~\eqref{eq:backward nominal-state synthesis} apply to a broad class of nonlinear dynamical systems of the form \eqref{eq::nonlinear control flow notation}, namely
\[
\dot{a}(t) = N(a(t)) + I,
\]
where $a(t)$ evolves in a Banach space $\cX$, and the drift $N\in C^1(\cX)$, the Fréchet derivative $DN$ is Gâteaux differentiable, and the following uniform boundedness conditions are satisfied: there exists $C > 0$ such that
\[
\sup_{u\in\cX}\|DN(u)\| \le C, \qquad \sup_{u\in\cX}\|\partial DN(u)\| \le C.
\]
% This includes, in particular, many classes of dynamics, such as neural field equations used in computational neuroscience, such as the multi-layer voltage-based neural mass models~\cite{faugeras2008absolute}.

Finally, we emphasize that our proposed constant-in-time input syntheses do not directly extend to neural field models written in the \emph{activity-based} form~\cite{bressloff2011spatiotemporal,ermentrout1998neural,faugeras2008absolute,liley2001spatially}, namely
\begin{equation}\label{eq:Activity-based model}
    \partial_t a(x,t) = -\alpha\, a(x,t) + \mu\, f\!\big([\omega \ast a](x,t) + I(x)\big), 
    \quad a(x,0) = a_0(x), 
    \quad (x,t) \in \mathbb{R}^d \times \mathbb{R}_+.
\end{equation}
% A natural first idea for constructing a constant-in-time input $I \in L^p(\mathbb{R}^d)$ that steers~\eqref{eq:Activity-based model} from an initial state $a(\cdot,0) = a_0(\cdot) \in L^p(\mathbb{R}^d)$ to a target state $a(\cdot,T) = a_1(\cdot) \in L^p(\mathbb{R}^d)$ is to introduce the change of functions
% \[
%     b(x,t) := [\omega \ast a](x,t) + I(x).
% \]
% Under this transformation, the activity-based model can be rewritten in an \emph{Amari-type} voltage-based form
% \begin{equation}\label{eq:Voltage-based model}
%     \partial_t b(x,t) = -\alpha\, b(x,t) + \mu\, f\!\big([\omega \ast f(b)](x,t)\big) + \alpha\, I(x), 
%     \quad b(x,0) = [\omega \ast a_0](x) + I(x).
% \end{equation}
% One could then attempt to apply, for instance, the forward nominal-state synthesis~\eqref{eq:approximation of the forward nominal-state synthesis input} for short time horizons $T = \tau > 0$. In doing so, a candidate input $I \in L^p(\mathbb{R}^d)$ must solve the implicit equation
% \begin{equation}\label{eq:input candidate for ABM}
%     \alpha\, \Big[e^{T DN(U_T(\omega \ast a_0 + I))} - \mathrm{Id}\Big] I 
%     = DN(U_T(\omega \ast a_0 + I))\, \big(\omega \ast a_1 + I - U_T(\omega \ast a_0 + I)\big).
% \end{equation}
% However, it is not straightforward to characterize conditions under which the nonlinear problem~\eqref{eq:input candidate for ABM} admits at least one solution $I \in L^p(\mathbb{R}^d)$. 
A detailed analysis of the extension of the proposed synthesis to~\eqref{eq:Activity-based model} is left for future work.

\section{How can the syntheses be used to predict visual illusions?}\label{s:application in predicting visual illusions}

Visual illusions are striking phenomena that shed light on the intricate workings of the visual system. It is well established that the visual system can produce illusory colors and shapes absent from the actual stimulus~\cite{grossberg1997visual}. Such illusions arise from specific patterns of neural activity---typically resulting from dynamic interactions within the visual cortex---and they often emerge almost instantaneously following exposure to a visual stimulus~\cite{billock2007neural, billock2012elementary,mackay1961visual, mackay1957moving}.

% \subsection{Description}

In modeling experimentally observed visual illusions using neural dynamics, the prevailing approach in the literature---see, e.g.,~\cite{bolelli2025neural,nicks2021understanding,tamekue2023cortical,tamekue2024mathematical,tamekue2025reproducibility}---relies on the stationary version of~\eqref{eq::nonlinear control flow notation}. In this framework, illusory perceptual patterns are interpreted as the inverse retinotopic image of a steady-state cortical activity profile driven by a visual stimulus via a corresponding sensory input.

This mechanistic approach offers a clear and interpretable link between the stimulus and the resulting perception. It is naturally suited to scenarios where the illusion emerges after the dynamics have stabilized, i.e., in the asymptotic regime, which is often appropriate given that many visual illusions can persist in the visual field for several seconds, see, for instance, \cite{mackay1957moving}. That said, several well-known illusions—including the visual MacKay effect (especially for the ``MacKay target'')~\cite{mackay1957moving}, the phenomena studied by Billock and Tsou~\cite{billock2007neural}, the Fraser spiral illusion~\cite{fraser1908new}, and the café wall illusion~\cite{kitaoka2004contrast}—can be perceived within very short (approximately $10$ sec. for the visual MacKay effect from the ``MacKay ray'' visual stimulus \cite{mackay1957moving}) latencies following stimulus onset. In such cases, cortical dynamics remain far from equilibrium, and the initial state of neural activity—modeled as $a_0$ in equation~\eqref{eq:NF-intro}—may significantly influence the illusory experience.

In this application, we should assume that the dimension of the space variable is $d=2$. As a prototypical example, suppose we aim to design a visual input $v_I$ that induces the perceptual pattern shown in \textbf{Fig.}~\ref{fig:tunnel-fovea}—denoted by $v_T$—after $T$ seconds of observation. This approach explicitly accounts for the initial cortical state, which can strongly influence the resulting perception. The pattern in \textbf{Fig.}~\ref{fig:tunnel-fovea} was originally reported in the psychophysical experiments of~\cite{billock2007neural}, where localized oriented stimuli in the visual field were shown to evoke compelling illusory perceptions in surrounding regions after brief exposures.

The control-theoretic formulation is as follows: Let $T > 0$ denote the small target time at which the illusory percept $v_T$ is observed. Let $a_1 := \mathscr{R}v_T$ represent the corresponding V1 cortical activity, obtained via the retinotopic map $\mathscr{R}$ (see, e.g.,~\cite{bressloff2001geometric,schwartz1977spatial})
% \footnote{Let $(r,\theta)\in(0,\infty)\times[0,2\pi)$ denote polar coordinates in the visual field and $(x_1,x_2)\in\R^2$ Cartesian coordinates in $\v1$. After rescaling the physiological parameters, and assuming that $(r, \theta)$ is far from the fovea (the central zone of the visual field), the retino-cortical map (see, e.g.,~\cite{bressloff2001geometric,schwartz1977spatial}) is analytically given by 
% 		\begin{equation*}
% 			\mathscr{R}:r e^{i\theta}  \mapsto (x_1,x_2):=\left( \log r, \theta \right).
% 	\end{equation*}} 
    between the visual field (retina) and V1, and let $a_0$ denote the initial cortical state in V1. If we assume that the average activity in V1 evolves according to~\eqref{eq:NF-intro} (when neglecting the label that models V1's functional features, e.g., the sensitivity of V1's ``simple cells'' to local orientations from the stimuli in the retina~\cite{bressloff2001geometric}), the goal is to synthesize a constant-in-time input $I$ such that the solution $a(\cdot)$ to~\eqref{eq:NF-intro}  satisfies $\|a(\cdot, T) - a_1\|_p \le \varepsilon$
for a prescribed small $\varepsilon > 0$ in a suitable $L^p(\mathbb{R}^2)$ norm. The corresponding visual stimulus is then obtained via the inverse retinotopic map as $v_I = \mathscr{R}^{-1} I$.

\begin{figure}[t!]
    \begin{subfigure}[b]{0.49\textwidth}
        \centering
        \includegraphics[width=0.5\textwidth]{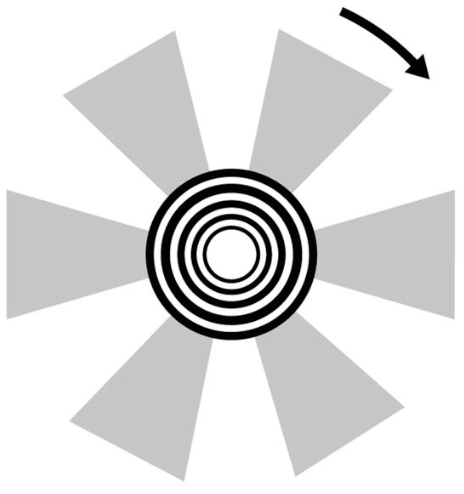}
          \caption{\footnotesize Geometrical pattern $v_T$ to induce in the visual field. The arrow indicates the direction of apparent motion \cite{billock2007neural}.}
          \label{fig:tunnel-fovea}
    \end{subfigure}
    \hfill
    \begin{subfigure}[b]{0.49\textwidth}
        \centering
  \includegraphics[width=0.5\textwidth]{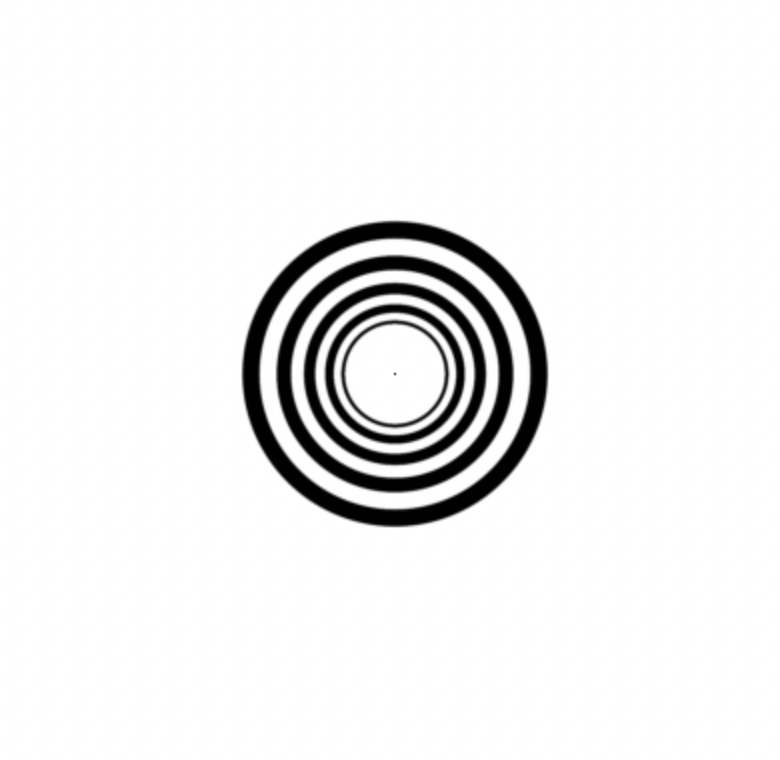}
    \caption{\footnotesize Visual stimulus $v_I$ that induces the illusion $v_T$ in Fig.~\ref{fig:tunnel-fovea}, as reported in~\cite{billock2007neural}.}
    \label{fig:tunnel-fovea-stimul}
    \end{subfigure}
    \vspace{-0.5cm}
    \end{figure}
    
Since $a_1$ is fixed and $I$ can be synthesized using any of the formulations~\eqref{eq:approximation of the forward nominal-state synthesis input}, \eqref{eq:forward synthesis}, \eqref{eq:backward synthesis}, or~\eqref{eq:backward nominal-state synthesis}, the primary degree of freedom—aside from the intrinsic parameters $\omega$, $f$, $\mu$, and $\alpha$ in~\eqref{eq:NF-intro}, which also influence the synthesis—is the choice of the initial state $a_0$. For example, in the case of the target pattern $v_T$ in \textbf{Fig.}~\ref{fig:tunnel-fovea},~\cite{billock2007neural} provides a stimulus $v_I$---depicted in \textbf{Fig.}~\ref{fig:tunnel-fovea-stimul}---known to induce this illusion. Therefore, the input $I$ is said to predict a visual illusion if its inverse retinotopic map coincides with an experimentally validated visual stimulus.

We defer to future work the application of the proposed syntheses for predicting visual illusions, as outlined in this section. This endeavor will require collaboration with researchers in visual perception science to experimentally validate the synthesized inputs, following approaches similar to those in~\cite{billock2007neural}.

\section{Numerical illustration of the proposed constant-in-time input syntheses}\label{s:NI}

This section presents numerical illustrations of the control syntheses developed in Section~\ref{ss:generic synthesis}, implemented in the scientific computing language \texttt{Julia}~\cite{bezanson2017julia,kelley2022solving}. %While the controllability results are established at the theoretical level, 
Our goal is to demonstrate their practical effectiveness through simulations on one- and two-dimensional spatial domains. Specifically, we consider the approximate constant-in-time inputs $I_{fn}(\cdot)$, $I_f(\cdot)$, $I_b(\cdot)$, and $I_{bn}(\cdot)$---defined for a short time horizon $T = \tau > 0$ by equations~\eqref{eq:approximation of the forward nominal-state synthesis input}, \eqref{eq:forward synthesis}, \eqref{eq:backward synthesis}, and~\eqref{eq:backward nominal-state synthesis}, respectively---and solve~\eqref{eq:NF-intro} under each input to obtain the corresponding final states $a_{fn}(\cdot, T)$, $a_f(\cdot, T)$, $a_b(\cdot, T)$, and $a_{bn}(\cdot, T)$.
We then evaluate the endpoint errors
\begin{equation}\label{eq:endpoint errors}
    \|a_{fn}(\cdot, T) - a_1(\cdot)\|_p,\quad 
    \|a_f(\cdot, T) - a_1(\cdot)\|_p,\quad 
    \|a_b(\cdot, T) - a_1(\cdot)\|_p,\quad 
    \text{and} \quad 
    \|a_{bn}(\cdot, T) - a_1(\cdot)\|_p,
\end{equation}
in the $L^p(\mathbb{R}^d)$ norm to verify that they remain small, consistent with the theoretical guarantees.

To assess the accuracy and efficiency of the proposed generic syntheses, we also compare (see~\textbf{Fig.}~\ref{fig:all_strategies_error_input_norms}) them to standard input syntheses based on the linearization of the nonlinear equation~\eqref{eq::nonlinear control flow notation} about the initial and target states, $a_0$ and $a_1$, respectively. The corresponding linearized inputs are given by
\begin{equation}\label{eq:linearized input syntheses}
    \begin{split}
        I_{a_0} &= -N(a_0) + \left[e^{T DN(a_0)} - \idty\right]^{-1} DN(a_0)\big(a_1 - e^{T DN(a_0)} a_0\big), \\
        I_{a_1} &= -N(a_1) + \left[e^{T DN(a_1)} - \idty\right]^{-1} DN(a_1)\big(a_1 - e^{T DN(a_1)} a_0\big),
    \end{split}
\end{equation}
and the corresponding solutions to~\eqref{eq::nonlinear control flow notation} are denoted $a_{a_0}(\cdot)$ and $a_{a_1}(\cdot)$. Observe that $I_{a_0}=I_{fn}$ and $I_{a_1}=I_{bn}$ if both $a_0$ and $a_1$ are equilibria of the drift dynamics $\dot{a}(t)=N(a(t))$.
% A comparison of the endpoint errors and input norms for all strategies is shown in \textbf{Fig.}~\ref{fig:all_strategies_error_input_norms}.

All simulations are performed in Julia using the \texttt{DifferentialEquations} package, where~\eqref{eq:NF-intro} is integrated via the \texttt{solve} routine. The convolution kernel is constructed using \texttt{centered} and \texttt{imfilter} functions from the \texttt{ImageFiltering} package. To compute the Jacobian-vector product $DN(a)b$, rather than relying on finite-difference approximations as commonly suggested in~\cite{kelley2022solving}---which may introduce numerical inaccuracies---we employ automatic differentiation via the \texttt{ForwardDiff} package. To evaluate $e^{T DN(a)}$, we construct the Jacobian matrix $DN(a)$ by applying it to each canonical basis vector $e_i$ of $\mathbb{R}^M$, where $M$ is the number of spatial discretization points.

% In one spatial dimension, where $M \leq 10^4$ is computationally manageable, we compute constant in time inputs such as $I_{bn}$ using the direct solver $A$\textbackslash $b$, with $A:= \idty - e^{-T DN(V_T(a_1))}$ and $b:= DN(V_T(a_1))(V_T(a_1) - a_0)$. In contrast, for the two-dimensional case where $M \geq 2\times10^4$, we resort to the iterative \texttt{GMRES} algorithm from the \texttt{IterativeSolvers} package to solve the associated linear system efficiently. For an overview of how \texttt{GMRES} is implemented in \texttt{Julia} and discussion about the convergence, see~\cite[p.~96]{kelley2022solving}.

Finally, it is worth noting that the parameter $\mu>0$ in each example is expressly chosen in such a way that Theorem~\ref{thm:Banach fixed-point theorem} cannot be applied.

\subsection{Examples in a one-dimensional spatial domain}\label{ss:NS in 1D}
% \begin{figure}[t!]
%     \centering
%     \includegraphics[width=0.95\linewidth]{figs/final_states.png}
%     \caption{{\color{red!80!magenta}In panels \textbf{(a)}–\textbf{(d)}, we plot the target state $a_1(x)$ from~\eqref{eq:initial and target state faye} alongside the final states $a_{fn}(x,T)$, $a_f(x,T)$, $a_b(x,T)$, and $a_{bn}(x,T)$ obtained under the control inputs $I_{fn}$, $I_f$, $I_b$, and $I_{bn}$, respectively, for time horizon $T = 0.25$. Here $\omega$ and $f$ are given in \eqref{eq:wizard 1D} and $a_0=0$. }}\vspace{-0.5cm}
%      \label{fig:final-states}
% \end{figure}
As noted in the introduction, this example is biologically motivated: it demonstrates that a bump-like activity pattern---commonly associated with neural encoding in working or short-term memory~\cite{laing2002multiple}---can be successfully induced over a short time horizon $T = 0.25$, even in a subcritical regime $\mu < \mu_c$, where such patterns do not naturally arise.

To this end, we consider the Amari-type neural field equation~\eqref{eq:NF-intro} posed on a one-dimensional domain, with parameters drawn from~\cite{faye2013localized}:
\begin{equation}\label{eq:wizard 1D}
    \omega(x) = 2\widehat{\omega}_c(\widehat{\omega}_c+1)e^{-|x|}-\frac{(2\widehat{\omega}_c+1)^2}{2}\sqrt{\frac{\widehat{\omega}_c}{\widehat{\omega}_c+1}}e^{-\sqrt{\frac{\widehat{\omega}_c}{\widehat{\omega}_c+1}}|x|},\quad
    f(x) = \frac{1}{1+\exp(-x+\theta)} - \frac{1}{1+\exp(\theta)},
\end{equation}
where $\widehat{\omega}_c := \max_{\xi \in \mathbb{R}} \widehat{\omega}(\xi) = \widehat{\omega}(\pm1/2\pi) = 5$ and $\theta = 3.5$. Note also from~\cite{faye2013localized} that $\widehat{\omega}(0) = \min_{\xi \in \mathbb{R}} \widehat{\omega}(\xi) = -1$.

\begin{figure}[t!]
    \centering
    \includegraphics[width=0.9\linewidth]{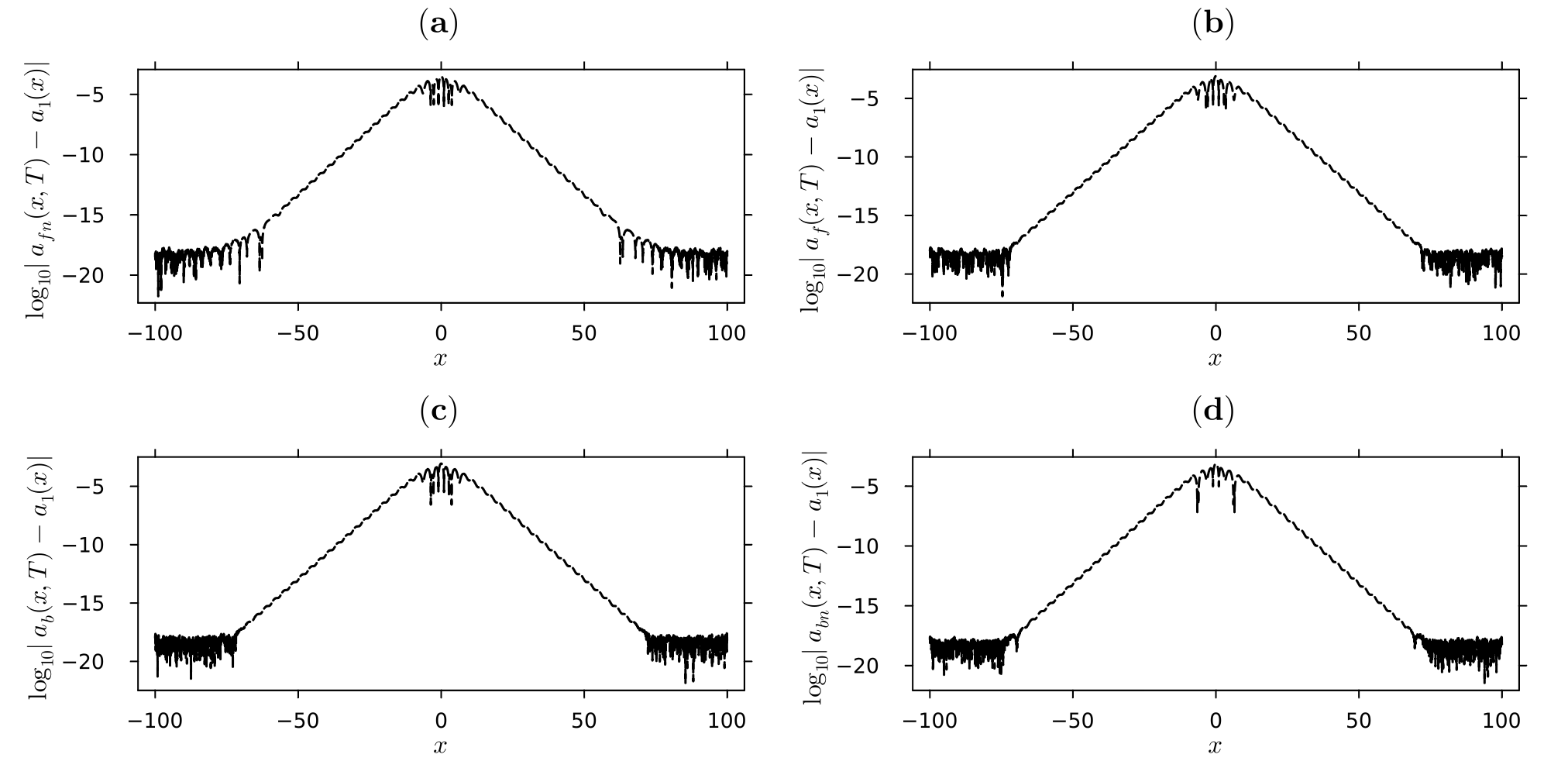}
    % \vspace{-0.7cm}
    \caption{
        Log-scale plot of the endpoint mismatch $\log_{10} \lvert a(x, T) - a_1(x) \rvert$ for each of the four control strategies, with $a_0$ and $a_1$ defined in~\eqref{eq:initial and target state faye} and final time $T = 0.25$. 
        \textbf{(a)} Forward nominal-state input: $\|a_{fn}(\cdot, T) - a_1(\cdot)\|_2 = 4.4 \times 10^{-4}$, $\|a_{fn}(\cdot, T) - a_1(\cdot)\|_\infty = 3.3 \times 10^{-4}$. 
        \textbf{(b)} Forward final-state input: $\|a_f(\cdot, T) - a_1(\cdot)\|_2 = 9.9 \times 10^{-4}$, $\|a_f(\cdot, T) - a_1(\cdot)\|_\infty = 7.9 \times 10^{-4}$. 
        \textbf{(c)} Backward initial-state input: $\|a_b(\cdot, T) - a_1(\cdot)\|_2 = 1.1 \times 10^{-3}$, $\|a_b(\cdot, T) - a_1(\cdot)\|_\infty = 8.8 \times 10^{-4}$. 
        \textbf{(d)} Backward nominal-state input: $\|a_{bn}(\cdot, T) - a_1(\cdot)\|_2 = 9.4 \times 10^{-4}$, $\|a_{bn}(\cdot, T) - a_1(\cdot)\|_\infty = 7.7 \times 10^{-4}$. 
        The mismatch is sharply localized around $x = 0$, where the target bump is centered and where the derivatives $f'\big(U_T(a_0)\big)$, $f'(a_1)$, $f'(a_0)$, and $f'\big(V_T(a_1)\big)$ reach their maximum values, amplifying sensitivity to perturbations. Here $\omega$ and $f$ are given in \eqref{eq:wizard 1D}.\vspace{-0.5cm}
    }
    \label{fig:diff_final-target_states}
\end{figure}

According to~\cite{faye2013localized}, in the absence of external input, the trivial state $a \equiv 0$ is the unique exponentially stable equilibrium for $\mu < \mu_c$, and a bifurcation occurs at $\mu = \mu_c$. At this point, two symmetric branches of spatially localized solutions emerge, which are approximately given by
\[
a(x) = 2\sqrt{\frac{-2c_1^1\lambda}{c_3^0}}\,\operatorname{sech}\left(x\sqrt{c_1^1\lambda}\right)\cos(x + \phi) + \mathcal{O}(\lambda), \quad x \in \mathbb{R},
\]
for small $\lambda < 0$, where $\phi \in \{0, \pi\}$, and $c_1^1, c_3^0 < 0$ are constants depending on $f$, $\omega$, and $\mu_c$.

For the simulations, we fix $\phi = 0$, neglect higher-order terms in $\lambda$, and assume $c_1^1\lambda = 0.0625$ and $c_3^0 = -2$. The initial and target states are defined as
\begin{equation}\label{eq:initial and target state faye}
    a_0(x) = 0, \quad a_1(x) = 0.5\,\operatorname{sech}(0.25x)\cos(x), \quad x \in \mathbb{R},
\end{equation}
both belonging to $L^2(\mathbb{R}) \cap L^\infty(\mathbb{R})$. As in~\cite{faye2013localized}, we set $\alpha = 1$. Then, by Remark~\ref{rmk:on the spectrum of DN(a) in L^2} and the definition of $\mu_c$ in~\eqref{eq:mu criticals}, the inputs $I_{fn}$ and $I_b$ are well defined whenever
\[
\mu \in (0, \mu_c) = (0, 7.02).
\]
Likewise, by Remark~\ref{rmk:on the SA in the nonlinear settings} and the definition of $\mu_1$ in~\eqref{eq:mu criticals}, the inputs $I_f$ and $I_{bn}$ are well defined if
\[
\mu \in (0, \mu_1) = (0, 0.8).
\]
To ensure uniform applicability across all synthesis strategies, we set $\mu = 0.625\,\mu_1 = 0.5$ in the experiments.

For implementation, the spatial domain $\Omega = [-L, L]$ with $L = 100$ is discretized with mesh width $\Delta x = 0.025$, yielding $M = 1 + \frac{2L}{\Delta x} = 8001$ uniformly spaced grid points. To assess the effectiveness of each control, we compute the endpoint errors~\eqref{eq:endpoint errors} over $\Omega$ in both the $L^2$ and $L^\infty$ norms. See \textbf{Fig.}~\ref{fig:diff_final-target_states} for detailed results.

\begin{figure}[t!]
    \centering
    \includegraphics[width=0.9\linewidth]{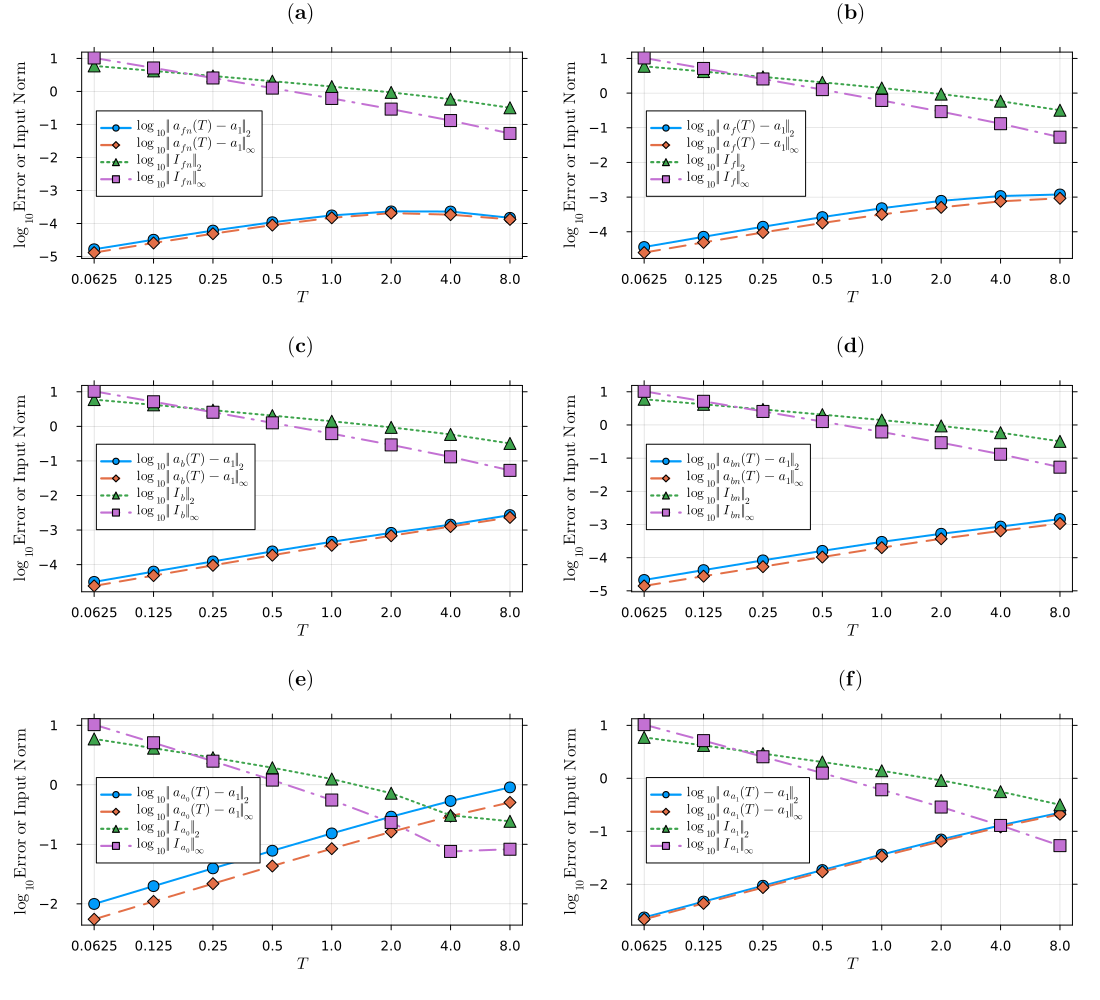}
    % \vspace{-0.7cm}
    \caption{
    Comparison of $\log_{10}$ endpoint errors and input norms for each control synthesis strategy as a function of the time horizon $T \in \{0.0625,\,0.125,\,0.25,\,0.5,\,1,\,2,\,4,\,8\}$, with $\omega$ and $f$ defined in~\eqref{eq:DoG 1D} and $a_0$, $a_1$ given in~\eqref{eq:considered initial and target state}, satisfying $\|a_1 - a_0\|_{L^2(\Omega)} = 1.49$ and $\|a_1 - a_0\|_{L^\infty(\Omega)} = 0.65$. 
    Each panel shows the $\log_{10}$ of the $L^2(\Omega)$ and $L^\infty(\Omega)$ endpoint errors and input norms for:
    \textbf{(a)} forward nominal-state input $I_{fn}$,
    \textbf{(b)} forward final-state input $I_f$,
    \textbf{(c)} backward initial-state input $I_b$,
    \textbf{(d)} backward nominal-state input $I_{bn}$,
    \textbf{(e)} input linearized at the initial state $I_{a_0}$,
    \textbf{(f)} input linearized at the target state $I_{a_1}$. 
    As $T$ increases, the input norms generally decrease for all syntheses except $I_{a_0}$, which decreases up to $T=4$ and then rises. 
    For small time horizons, the forward and backward nominal-state syntheses yield the lowest endpoint errors, followed by the backward initial-state and forward final-state syntheses. 
    Notably, both linearized inputs $I_{a_0}$ and $I_{a_1}$ given in~\eqref{eq:linearized input syntheses} perform significantly worse than the generic syntheses in terms of both error and input norm. 
    }
    % \vspace{-0.5cm}
    \label{fig:all_strategies_error_input_norms}
\end{figure}

In the second example, we consider the neural field equation~\eqref{eq:NF-intro} on $\mathbb{R}$ with a Difference-of-Gaussians (DoG) connectivity kernel and a shifted sigmoid activation function
\begin{equation}\label{eq:DoG 1D}
    \omega(x) = \frac{1}{\sigma_1\sqrt{2\pi}}e^{-\frac{x^2}{2\sigma_1^2}} - \frac{\kappa}{\sigma_2\sqrt{2\pi}}e^{-\frac{x^2}{2\sigma_2^2}}, \qquad 
    f(x) = \frac{1}{1 + e^{-x + 0.25}} - \frac{1}{1 + e^{0.25}}, \quad x \in \mathbb{R},
\end{equation}
with parameters $\sigma_1 = 1/(\pi\sqrt{2})$, $\sigma_2 = \sqrt{2}/\pi$, and $\kappa = 0.95$. We set $\alpha = 0.1$ and $\mu = 0.75 \mu_1 = 0.624$, where $\mu_1 = 0.832 < \mu_c = 0.845$ is computed using~\eqref{eq:mu criticals}. The initial and target states are defined by
\begin{equation}\label{eq:considered initial and target state}
     a_0(x) = \frac{1}{\sqrt{1 + x^2}},\qquad  a_1(x) = e^{-x^2} - 0.5 e^{-0.5 x^2},\qquad x\in\mathbb{R},
\end{equation}
 \begin{figure}[t!]
    \centering
    \includegraphics[width=0.9\linewidth]{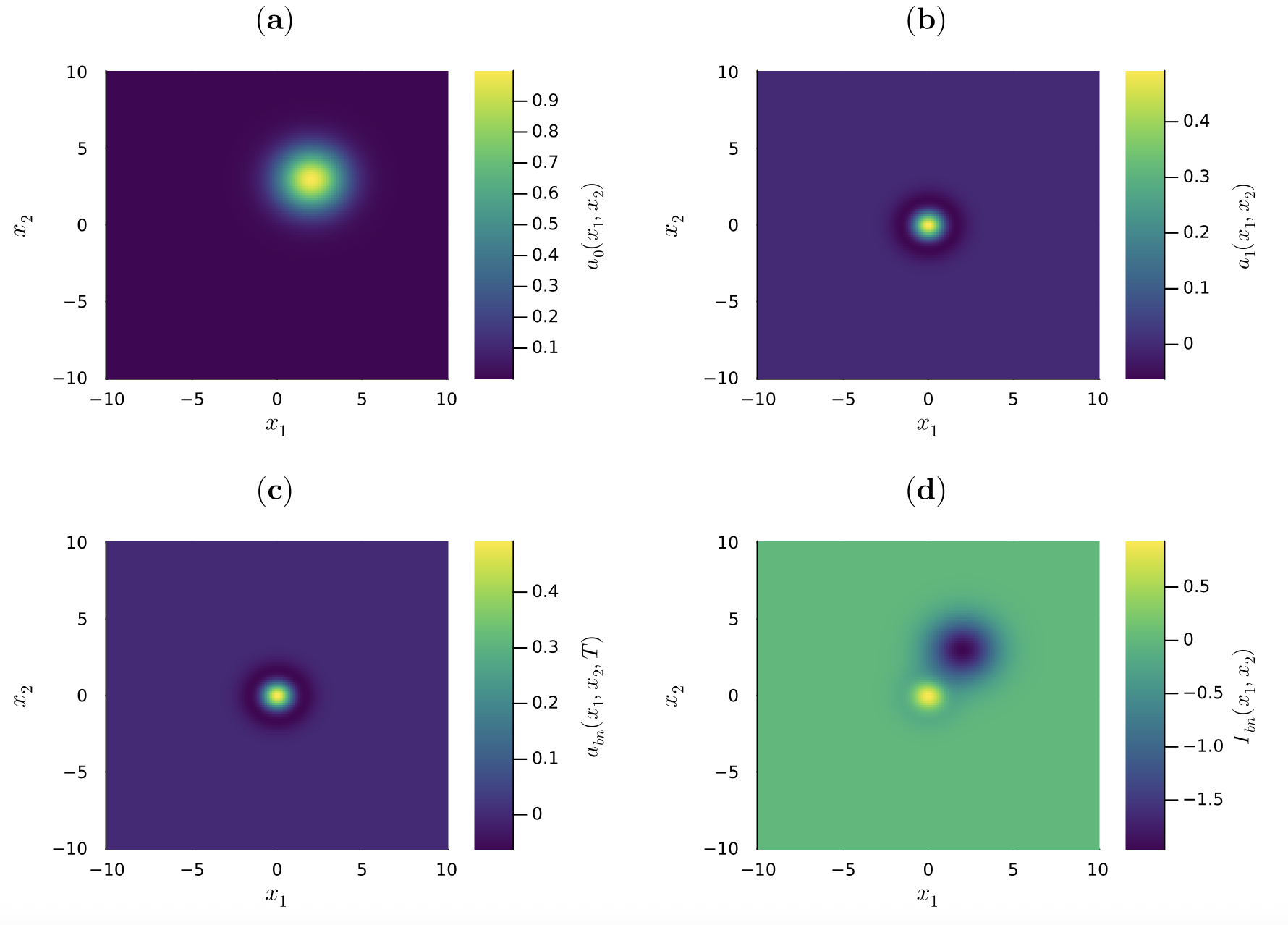}
    \caption{
        Numerical simulation of the two-dimensional neural field model~\eqref{eq:NF-intro} controlled via the backward nominal-state input $I_{bn}$ over the time horizon $T = 0.5$, using parameters from~\eqref{eq:DoG 2D} and initial/target states from~\eqref{eq:initial and target state 2D}. 
        \textbf{(a)} Initial state $a_0(x_1,x_2)$. 
        \textbf{(b)} Target state $a_1(x_1,x_2)$. 
        \textbf{(c)} Final state $a_{bn}(x_1,x_2,T)$. 
        \textbf{(d)} Input $I_{bn}(x_1,x_2)$ used throughout $[0,T]$. 
        The endpoint errors are $\|a_{bn}(T) - a_1\|_2 = 1.7 \times 10^{-4}$ and $\|a_{bn}(T) - a_1\|_\infty = 8.4 \times 10^{-5}$, while the input norms are $\|I_{bn}\|_2 = 5.05$ and $\|I_{bn}\|_\infty = 1.96$. For reference, the initial discrepancy is $\|a_1 - a_0\|_2 = 2.57$ and $\|a_1 - a_0\|_\infty = 0.99$.
    }
    % \vspace{-0.75cm}
    \label{fig:in 2D}
\end{figure}
both of which belong to $L^2(\mathbb{R}) \cap L^\infty(\mathbb{R})$. According to Amari's terminology~\cite{amari1977dynamics}, the initial state $a_0$ can be interpreted as an ``$\infty$-solution'' corresponding to a spatially widespread excitation across the entire domain, while the target state $a_1$ represents a ``1-bump'' solution, characterized by a localized region of activity. 
% These (stationary) solutions typically arise in the neural field equation~\eqref{eq:NF-intro} under constant (in time and space) external inputs $\overline{I}$, with ``$\infty$-solution'' emerging for positive values of $\overline{I}$ and ``1-bump'' for negative values.

Under this setting, all constant-in-time inputs $I_{fn}$, $I_f$, $I_b$, $I_{bn}$, $I_{a_0}$, and $I_{a_0}$ are well-defined. For simulations, the spatial domain $\Omega = [-L, L]$ with $L = 20$ is discretized using a uniform mesh with step size $\Delta x = 0.01$, yielding $M = 1 + \frac{2L}{\Delta x} = 4001$ grid points. To evaluate the performance of each control strategy, we compute the endpoint errors $\log_{10}\|a(\cdot, T) - a_1\|_2$ and $\log_{10}\|a(\cdot, T) - a_1\|_\infty$, along with the norms of the corresponding inputs across various time horizons $T \in \{0.0625,\,0.125,\,0.25,\,0.5,\,1,\,2,\,4,\,8\}$. We refer to Fig.~\ref{fig:all_strategies_error_input_norms} for visualization.

\subsection{Example in two-dimensional spatial domain}\label{ss:NS in 2D}
We now consider~\eqref{eq:NF-intro} posed on $\mathbb{R}^2$ with a difference-of-Gaussians (DoG) lateral connectivity kernel and a shifted sigmoid activation function
\begin{equation}\label{eq:DoG 2D}
    \omega(x) = \frac{1}{2\pi\sigma_1^2}e^{-\frac{|x|^2}{2\sigma_1^2}} - \frac{\kappa}{2\pi\sigma_2^2}e^{-\frac{|x|^2}{2\sigma_2^2}}, \qquad 
    f(s) = \frac{1}{1 + e^{-s + 0.25}} - \frac{1}{1 + e^{0.25}}, \quad (x, s) \in \R^2\times\R,
\end{equation}
with parameters $\sigma_1 = 1/(\pi\sqrt{2})$, $\sigma_2 = \sqrt{2}/\pi$, and $\kappa = 0.85$. The objective is to steer the system from the initial state $a_0$ to the target state $a_1$ in time $T = 0.5$, where
\begin{equation}\label{eq:initial and target state 2D}
     a_0(x) = e^{-\left(\frac{(x_1-2)^2}{4} + \frac{(x_2-3)^2}{4}\right)},\qquad  
     a_1(x) = e^{-|x|^2} - 0.5\,e^{-\frac{|x|^2}{2}},\qquad x = (x_1,x_2) \in \R^2,
\end{equation}
both belong to $L^p(\mathbb{R}^2)$. We fix $\alpha=0.1$ and choose $\mu := 0.75 \mu_1 = 0.6$, where $\mu_1 = 0.8$ is obtained using~\eqref{eq:mu criticals}. 

The domain $\mathbb{R}^2$ is approximated by the square $\Omega = [-L, L]^2$ with $L = 10$, discretized on a uniform grid with mesh size $\Delta x = \Delta y = 0.15$, yielding $M = \left(1 + \frac{2L}{\Delta x}\right)^2 = 17956$ grid points. In this example, we focus solely on the backward nominal-state input, $I_{bn}$. See \textbf{Fig.}~\ref{fig:in 2D} for visualization. %To assess the performance of the synthesis, we compute the endpoint error $\|a_{bn}(\cdot, T) - a_1(\cdot)\|$ in both the $L^2(\Omega)$ and $L^\infty(\Omega)$ norms. 

\section{Conclusion}
In this article, we have investigated the controllability properties of Amari-type neural field equations, with an emphasis on synthesizing piecewise constant-in-time and constant-in-time inputs to steer neural activity from an initial to a desired target state. By constructing explicit constant-in-time input formulas, we extend the theoretical understanding of how sustained inputs can be designed to modulate population activity, opening the door to potential applications in neuroscience, psychophysics, and neurostimulation.  

Our numerical illustrations demonstrate the technical feasibility and effectiveness of the proposed input syntheses~\eqref{eq:approximation of the forward nominal-state synthesis input}, \eqref{eq:forward synthesis}, \eqref{eq:backward synthesis}, and~\eqref{eq:backward nominal-state synthesis}. In particular, the simulations confirm that these generic syntheses yield significantly smaller endpoint errors and more efficient control efforts compared to naive synthesis (see~\eqref{eq:linearized input syntheses}), based on linearizing the dynamics at the initial or target state when it is not an equilibrium of the drift dynamics. 
Moreover, from a computational standpoint, these generic syntheses are obtained by closed-form algebraic formulas once the flow maps $U_T(a_0)$ or $V_T(a_1)$ are evaluated; numerically, this requires only a forward or backward integration of the drift dynamics, together with standard linear-algebra operations (matrix exponential and a linear solve), and avoids iterative optimization routines.

A natural direction for future work is to integrate experimental data to identify physiologically plausible models and experimentally validate the proposed synthesis framework---for example, in the context of predicting and mechanistically explaining paradoxical visual illusions as outlined in Section~\ref{s:application in predicting visual illusions}.

% \newpage

\appendix

\section{Proofs of the main results}\label{s:proofs of main results}

In this section, we present the proof of the main results provided in the main text.

\subsection{Proof of Theorem~\ref{thm:Banach fixed-point theorem}}\label{ss:proof of the Banach fixed-point theorem}

\begin{proof}
First, we consider the nonlinear map $\Phi:L^p(\R^d)\to L^p(\R^d)$ defined by
\begin{equation}
    \Phi(I) = \alpha(1 - e^{-\alpha T})^{-1} \left\{ a_1 - e^{-\alpha T} a_0 - \mu \int_0^T e^{-\alpha(T - t)} \, \omega \ast f(a_I(t)) \, dt \right\},\quad I\in L^p(\R^d)
\end{equation}
where $a_I(\cdot)\in C^0([0, T]; L^p(\R^d)) $ is the unique solution to~\eqref{eq:NF-intro} with the input $I\in L^p(\R^d)$, provided by Proposition~\ref{pro:well-posedness}. To complete the proof of the theorem, it suffices to show that $\Phi$ is $L(T)-$Lipschitz continuous, where $L(T)>0$ is defined in \eqref{eq:lipschitz constant of Phi}. Let $J\in L^p(\R^d)$, and $a_J(\cdot)$ be the corresponding solution to~\eqref{eq:NF-intro}. One has 
\[
\Phi(J)-\Phi(I) = \alpha(1 - e^{-\alpha T})^{-1}\mu\int_0^T e^{-\alpha(T - t)} \omega\ast[f(a_I(t))-f(a_J(t))]\,dt
\]
which taking the $L^p(\R^d)$-norm, and Young-convolution inequality yield
\begin{equation}\label{eq:term 1}
    \|\Phi(J)-\Phi(I)\|_p\le \alpha(1 - e^{-\alpha T})^{-1}\mu\|\omega\|_1\int_0^Te^{-\alpha(T - t)}\|a_I(t)-a_J(t)\|_p\,dt.
\end{equation}
Using \eqref{eq:implicit sol representation}, and Gronwall's lemma, one obtains
% \[
%   \|a_I(t)-a_J(t)\|_p\le\alpha\frac{\mu}{\mu_0}\int_0^te^{-\alpha(t-s)}\|a_I(s)-a_J(s)\|_p\,ds+\|I-J\|_p\int_0^te^{-\alpha(t-s)}\,ds
% \]
% which by Gronwall's lemma yields
\begin{equation}\label{eq:term 2}
    \|a_I(t)-a_J(t)\|_p\le e^{-\alpha t\left(1-\frac{\mu}{\mu_0}\right)}\|I-J\|_p\int_0^te^{\alpha s\left(1-\frac{\mu}{\mu_0}\right)}\, ds.
\end{equation}
Combining \eqref{eq:term 1} and \eqref{eq:term 2}, one deduces
\begin{equation}\label{eq:term 3}
    \|\Phi(J)-\Phi(I)\|_p\le\alpha^2\frac{\mu}{\mu_0}\|I-J\|_p(1 - e^{-\alpha T})^{-1}\int_0^Te^{-\alpha(T-t)}e^{-\alpha t\left(1-\frac{\mu}{\mu_0}\right)}\int_0^te^{\alpha s\left(1-\frac{\mu}{\mu_0}\right)}\,ds\,dt.
\end{equation}
Now, if $\mu=\mu_0$, one finds from \eqref{eq:term 3} that 
\begin{equation}\label{eq:term 4}
     \|\Phi(J)-\Phi(I)\|_p\le\alpha^2\frac{\mu}{\mu_0}\|I-J\|_p(1 - e^{-\alpha T})^{-1}\int_0^Te^{-\alpha(T-t)}t\,dt=\left[\frac{\alpha T}{1-e^{-\alpha T}}-1\right]\|I-J\|_p
\end{equation}
by using integration by parts of $\alpha\int_0^Te^{-\alpha(T-t)}t\,dt$. In the case of $\mu\neq\mu_0$, one deduces from \eqref{eq:term 3} that
\begin{equation}\label{eq:term 5}
    \|\Phi(J)-\Phi(I)\|_p\le \frac{1}{r-1}\left(\frac{e^{\alpha rT}-1}{e^{\alpha T}-1}-r\right)\|I-J\|_p,\qquad r:=\frac{\mu}{\mu_0}
\end{equation}
by performing the integration of 
\[
\int_0^Te^{-\alpha(T-t)}e^{-\alpha t\left(1-\frac{\mu}{\mu_0}\right)}\int_0^te^{\alpha s\left(1-\frac{\mu}{\mu_0}\right)}\,ds\,dt=\frac{1}{\alpha^2(1-r)}\left[\left(1-e^{-\alpha T}\right)+\frac{\left(1-e^{\alpha r T}\right)}{re^{\alpha T}}\right].
\]
Inequalities~\eqref{eq:term 4} and \eqref{eq:term 5} show that $\Phi$ is $L(T)-$Lipschitz continuous. Finally, an immediate real analysis shows that $L(T)<1$ if and only if the conditions in the statement of the theorem are satisfied.
\end{proof}

% \subsection{Proof of Proposition~\ref{pro:linear case}}\label{ss:proof of the linear case} 

% \begin{proof}
%    Since $Au = -\alpha u+\mu\omega\ast u$, $u\in L^p(\R^d)$ belongs to $\mathscr{L}(L^p(\R^d))$ for every $p\in[1, \infty]$, it follows, e.g., from \cite[Chapter~4]{pazy2012semigroups} that the linear equation
%    \begin{equation}\label{eq:LE}
%           \dot{a}(t)=Aa(t)+I,\quad a(0)=a_0,\qquad t\in [0,T]
%    \end{equation}
%     admits a unique solution $a(\cdot)\in C^\infty([0,T]; L^p(\R^d))$ that can be represented by the following Duhamel formula
%     \begin{equation}\label{eq:DF}
%         a(t) = e^{tA}a_0+\int_0^te^{(t-s)A}\,dsI,\qquad\forall t\in[0, T].
%     \end{equation}
%     Evaluating \eqref{eq:DF} at $t=T$, and assuming the terminal condition $a(T)=a_1$, one finds
%     \[
%     Aa_1=Ae^{TA}a_0+A\int_0^Te^{(T-t)A}\,dsI=Ae^{TA}a_0+\left[e^{TA}-\idty\right]I,
%     \]
%     which implies that $I\in L^p(\R^d)$ is given by $ I = (e^{TA}-\operatorname{Id})^{-1}A(a_1-e^{TA}a_0)$ if the spectral condition~\eqref{eq:sc linear} is satisfied. Conversely, using \eqref{eq:DF}, a direct computation shows that the corresponding solution $a(\cdot)$ to~\eqref{eq:LE} with the input $I=(e^{TA}-\operatorname{Id})^{-1}A(a_1-e^{TA}a_0)$ satisfies the terminal constraint $a(T)=a_1$ if the spectral condition~\ref{eq:sc linear} is satisfied.
% \end{proof}

\subsection{Proof of Theorem~\ref{thm:backward representation}}\label{ss:proof of backward representation}
\begin{proof}
     Let $t\in [0, T]$ and set $b:s\in[0, t]\mapsto b(s)\in L^p(\R^d)$ so that $a(s) = V_{T-s}(b(s))$ is the solution of \eqref{eq::nonlinear control flow notation}. Then $b$ is derivable almost everywhere w.r.t. $s$ and it holds
    \begin{equation}
        N(a(s))+I(s)=\dot{a}(s)=N(a(s))+DV_{T-s}(b(s))\dot{b}(s)
    \end{equation}
    so that using \eqref{inverse of the differential of the flow}, we find that $b$ solves the following
    \begin{equation}\label{eq:equation satisfying b}
        \dot{b}(s)=DU_{T-s}(a(s))I(s),\qquad b(0)=U_T(a_0).
    \end{equation}
    Integrating \eqref{eq:equation satisfying b} over $[0, t]$ yields \eqref{eq:backward representation}. Conversely, if \eqref{eq:backward representation} holds, then $a(0)=a_0$ and $a\in C^0([0, T]; L^p(\R^d))$. %{\color{blue!80!magenta}
    Otherwise, there exists (a nondecreasing sequence) $(t_n)\subset [0, T]$, $t_{*}\in[0, T]$ with $t_n\to t_{*}$ and $\varepsilon>0$ such that $\|U_{T-t_n}(a(t_n))-U_{T-t_{*}}(a(t_{*}))\|_p\ge\varepsilon$. In fact, $a\in C^0([0, T]; L^p(\R^d))$ if and only if $b(t):=U_{T-t}(a(t))\in C^0([0, T]; L^p(\R^d))$ since $U_{T-t}$ is invertible and $C^1$ w.r.t. $t\in\R$. However, we have from \eqref{eq:backward representation} that
    % \begin{equation}
    %     U_{T-t_n}(a(t_n))-U_{T-t_{*}}(a(t_{*})) = \int_{t_n}^{t_*}DU_{T-s}(a(s)) I(s)\,ds
    % \end{equation}
    % which implies
    \begin{equation}
       \|U_{T-t_n}(a(t_n))-U_{T-t_{*}}(a(t_{*}))\|_p\le e^{-\alpha T(1-\frac{\mu}{\mu_0})}e^{\Lambda  t_*}\int_{t_n}^{t_*}\|I(s)\|_p\,ds
    \end{equation}
    by using \eqref{eq:spectral norm of D_U general} with $\gamma(s)=T-s$ and $\beta(s)=a(s)$. It follows that $\|U_{T-t_n}(a(t_n))-U_{T-t_{*}}(a(t_{*}))\|_p\to 0$ as $n\to\infty$, which is inconsistent. Letting now
    \begin{equation}
        c(t):= U_T(a_0)+\int_{0}^{t}DU_{T-s}(a(s))I(s)ds
    \end{equation}
    and deriving \eqref{eq:backward representation} almost everywhere w.r.t. $t$ yields
    \begin{equation}\label{eq:derivation a}
        \dot{a}(t)=-\partial_tV_{T-t}(c(t))+DV_{T-t}(c(t))DU_{T-t}(a(t))I(t)=N(V_{T-t}(c(t)))+I(t)=N(a(t))+I(t)
    \end{equation}
   by $a(t) = V_{T-t}(c(t))$ and $DV_{T-t}(c(t))DU_{T-t}(a(t))=\idty$. This completes the proof of the theorem. 
\end{proof}

\subsection{Proof of Proposition~\ref{pro:expansion for forward nominal-state synthesis}}\label{ss:proof of expansion for forward nominal-state synthesis}

\begin{proof}
   Theorem~\ref{thm:backward representation} ensures that the solution $a_\tau(\cdot)$ of \eqref{eq::nonlinear control flow notation} with the input $I_{sf, \tau}$ defined in \eqref{eq:initial step function} satisfies
    \begin{equation}
        a_\tau(T) = U_T(a_0)+\int_{T-\tau}^{T}DU_{T-t}(a_\tau(t)) I\, dt.
    \end{equation}
The function $\phi:t\mapsto\phi(t)=DU_{T-t}(a_\tau(t))I$ belongs to $L^1((0, T); L^p(\R^d))$ by Lemma~\ref{lem:general estimates flows}, and is derivable for almost every $t\in(0, T)$, and
\begin{equation}
    \dot{\phi}(t) = -DN(U_{T-t}(a_\tau(t)))DU_{T-t}(a_\tau(t))I+\partial DU_{T-t}(a_\tau(t))\dot{a}_\tau(t)I
\end{equation}
belongs to $L^1((0, T); L^p(\R^d))$ by \eqref{eq:operator norm of DN} and inequalities \eqref{eq:spectral norm of D_U general}, \eqref{eq:norm of a_dot(s)} and \eqref{eq:spectral norm of hessian D_U^2}. It follows that $\phi$ is absolutely continuous on $[0, T]$. By evoking \eqref{eq:Gâteaux derivative of DN} and \eqref{eq:derivative of Q_s^n}, a similar observation applies to $t\mapsto Z_{t,\tau,T}^nDU_{T-t}(a_\tau(t))I$ for any $n\in\N^{*}$, where $Z_{t,\tau,T}:=DN(U_{T-t}(a_\tau(t)))$. Via a double integration by parts and the fact that $a_\tau(T-\tau)=U_{T-\tau}(a_0)$, one gets
   \begin{eqnarray}\label{eq:integration by parts}
    a_\tau(T)
    % &=&U_T(a_0)+\int_{T-\tau}^{T}(t-T)'DU_{T-t}(a_\tau(t))I\,dt\nonumber\\
    &=& U_T(a_0)+\tau DU_\tau(U_{T-\tau}(a_0))I-\frac{\tau^2}{2}DN(U_T(a_0))DU_\tau(U_{T-\tau}(a_0))I+\int_{T-\tau}^{T}(t-T)\partial DU_{T-t}(a_\tau(t))\dot{a}_\tau(t)I\,dt\nonumber\\
    &&-\int_{T-\tau}^T\frac{(t-T)^2}{2}\left[\frac{d}{dt}Z_{t,\tau,T}\right]DU_{T-t}(a_\tau(t))I\,dt+\int_{T-\tau}^T\frac{(t-T)^2}{2}Z_{t,\tau,T}\partial DU_{T-t}(a_\tau(s))\dot{a}_\tau(t)I\,dt\nonumber\\
    &&+\int_{T-\tau}^T\frac{(t-T)^2}{2}Z_{t,\tau,T}^2DU_{T-t}(a_\tau(t))I\,dt
\end{eqnarray}
from which one performs another integration by parts on the last integral in \eqref{eq:integration by parts} and so on to obtain \eqref{eq:expansion for forward nominal-state synthesis} and \eqref{eq:kappa and chi}. Here $\partial DU_{T-t}(a_\tau(t))\dot{a}_\tau(t)I$ is the Gâteaux derivative of $DU_{T-t}$ at $a_\tau(t)$ in the direction $\dot{a}_\tau(t)$, and applied to $I$. Finally, the series \eqref{eq:expansion for forward nominal-state synthesis} and \eqref{eq:kappa and chi} are well-defined by Lemma~\ref{lem:well-defined kappa and chi}.
\end{proof}

Finally, the following result shows that the series in Proposition~\ref{pro:expansion for forward nominal-state synthesis} are well-defined. 

\begin{lemma}\label{lem:well-defined kappa and chi} %{\color{blue!80!magenta} 
     The series in \eqref{eq:expansion for forward nominal-state synthesis} is well-defined in $\mathscr{L}(L^p(\R^d))$ for every $p\in(1, \infty]$, and the series in \eqref{eq:kappa and chi} are well-defined in $\mathscr{L}(L^p(\R^d))$ for every $p\in[2, \infty]$. Moreover, it holds 
     \begin{equation}\label{eq:norm varphi}
         \|\varphi_{\tau,T}(I)\|_{\mathscr{L}(L^p(\R^d))}\le C \tau^3
     \end{equation}
     for some $ C:=C(\tau, T, \alpha,\mu, \mu_0, \|\omega\|_q,\|f''\|_\infty, \|a_0\|_p, \|I\|_p)>0$. %}
\end{lemma}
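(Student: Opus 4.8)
The plan is to estimate the operator norms of the general terms in the three series appearing in \eqref{eq:expansion for forward nominal-state synthesis} and \eqref{eq:kappa and chi}, show that they are summable, and extract the cubic bound in $\tau$. First I would handle the leading series $\sum_{n\ge 0}\frac{(-\tau)^{n+1}}{(n+1)!}DN(U_T(a_0))^nDU_\tau(U_{T-\tau}(a_0))$: using the uniform bound \eqref{eq:operator norm of DN}, namely $\|DN(\psi)\|_{\mathscr L(L^p)}\le\alpha+\mu\|\omega\|_1=:\Lambda$ (or $\alpha+\mu\|\widehat\omega\|_\infty$ when $p=2$), together with the flow-differential estimate \eqref{eq:spectral norm of D_U general} from the appendix giving $\|DU_\tau(\cdot)\|_{\mathscr L(L^p)}\le e^{\Lambda\tau}$ (say), the $n$-th term is bounded by $\tfrac{\tau^{n+1}\Lambda^n}{(n+1)!}e^{\Lambda\tau}$, whose sum is $\tfrac{e^{\Lambda\tau}}{\Lambda}(e^{\Lambda\tau}-1)<\infty$ for every $p\in(1,\infty]$. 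This already proves the first assertion and only requires $DN$ to be a bounded operator, which holds for all $p\in(1,\infty]$.

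Next I would treat $\kappa_{\tau,T}(I)$ and $\chi_{\tau,T}(I)$. The key point is that both involve the Gâteaux derivative $\partial DU_t(\cdot)$ of the flow differential and hence, through the chain rule, the second differential $\partial DN$; by Proposition~\ref{pro:second differential of N} the operator $\partial DN(h)k$ lies in $\mathscr L(L^p)$ only for $p\in[2,\infty]$, which is precisely where the restriction on $p$ enters. Granting that, I would bound the integrands in \eqref{eq:kappa and chi} pointwise in $t\in[0,\tau]$: the factor $Z_{t,\tau,T}^n$ (resp. $Z_{t,\tau,T}^{n-1}$) contributes $\Lambda^n$ (resp. $\Lambda^{n-1}$); the factor $Q_{t,\tau,T}=DU_t(a_\tau(T-t))$ is bounded by $e^{\Lambda\tau}$ via \eqref{eq:spectral norm of D_U general}; the factor $P_{t,\tau,T}=\partial DU_t(a_\tau(T-t))\dot a_\tau(T-t)$ and the factor $\frac{d}{dt}Z_{t,\tau,T}^n$ are each controlled, using \eqref{eq:spectral norm of hessian D_U^2} and \eqref{eq:norm of a_dot(s)} from the appendix together with \eqref{eq:Gâteaux derivative of DN}, by a constant times $\Lambda^{n}$ (the power of $n$ coming from differentiating the $n$-fold product $Z^n$) multiplied by $\|\dot a_\tau(T-t)\|_p$, which in turn is bounded in terms of $\|a_0\|_p$ and $\|I\|_p$ by the well-posedness estimate \eqref{eq:a when p=finite}. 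With the weight $|t|^{n+1}/(n+1)!$ (resp. $|t|^n/n!$) and integrating over $[0,\tau]$, each term is $O\!\big(\tau^{n+2}\Lambda^n/(n+1)!\big)$; the lowest-order term corresponds to $n=1$ and is $O(\tau^3)$, while the tail sums to something $O(\tau^3 e^{\Lambda\tau})$. Summing over $n\ge 1$ gives convergence in $\mathscr L(L^p)$ and the bound $\|\kappa_{\tau,T}(I)\|+\|\chi_{\tau,T}(I)\|\le C\tau^3$ with $C$ depending only on the displayed quantities.

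Finally, $\varphi_{\tau,T}(I)=\kappa_{\tau,T}(I)+\chi_{\tau,T}(I)$ inherits both the convergence and the estimate \eqref{eq:norm varphi}. The main obstacle I anticipate is bookkeeping the dependence on $n$ in the derivative $\frac{d}{dt}Z_{t,\tau,T}^n$: differentiating an $n$-fold operator product yields $n$ summands, so a naive bound would give an extra factor $n$ that must be absorbed by the factorial $(n+1)!$ — this is harmless but needs to be written carefully, and one must make sure the constant from \eqref{eq:spectral norm of hessian D_U^2} bounding $\partial DU_t$ is uniform in $t\in[0,\tau]$ and $a_\tau(T-t)$, which is where $\|f''\|_\infty$ and $\|\omega\|_q$ enter the constant $C$. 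All the requisite pointwise flow estimates are exactly the auxiliary lemmas deferred to Appendix~\ref{s:complement results}, so the argument is a matter of assembling them in the right order.
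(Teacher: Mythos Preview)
Your plan is correct and follows the paper's proof essentially step by step: the leading series is summed via the exponential of the uniform bound $\|DN\|\le\Lambda$, and $\kappa_{\tau,T}(I)$, $\chi_{\tau,T}(I)$ are estimated term by term using exactly the flow bounds \eqref{eq:spectral norm of D_U general}, \eqref{eq:spectral norm of hessian D_U^2}, \eqref{eq:norm of a_dot(s)} and \eqref{eq:Gâteaux derivative of DN}, with the restriction $p\in[2,\infty]$ entering through Proposition~\ref{pro:second differential of N}. The bookkeeping issue you flag is resolved in the paper precisely as you describe (writing $\tfrac{d}{dt}Z^n=\sum_{k=1}^n Z^{k-1}\dot Z\, Z^{n-k}$ and absorbing the factor $n$ into $(n+1)!$); the one point worth making explicit in your write-up is that the extra factor of $t$ in the estimate \eqref{eq:spectral norm of hessian D_U^2} for $P_{t,\tau,T}$ is what lifts the $\chi$-bound from $O(\tau^2)$ to the required $O(\tau^3)$.
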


\begin{proof}
% {\color{blue!80!magenta} 
% Recall that $\varphi_{\tau,T}(I):=\kappa_{\tau,T}(I)+\chi_{\tau,T}(I)$ where
%     \begin{equation}\label{eq:chi}
%         \kappa_{\tau,T}(I) = \sum_{n=1}^{\infty}\int_{0}^{\tau}\frac{(-1)^nt^{n+1}}{(n+1)!}\left[\frac{d}{dt}Z_{t,\tau,T}^n\right]Q_{t,\tau,T}\,dt,\qquad\chi_{\tau,T}(I)= \sum_{n=1}^{\infty}\int_{0}^{\tau}\frac{(-t)^nZ_{T-t}^{n-1}}{n!}P_{t,\tau,T}\,dt.
%     \end{equation}
%     Here $Q_{t,\tau,T}:=DU_t(a_\tau(T-t))$, $Z_{t,\tau,T}:=DN(U_t(a_\tau(T-t)))$ and $P_{t,\tau,T}:=\partial DU_t(a_\tau(T-t))\dot{a}_\tau(T-t)$. 
First of all, the series in \eqref{eq:expansion for forward nominal-state synthesis} is well-defined in $\mathscr{L}(L^p(\R^d))$ by invoking the exponential operator~\eqref{eq:exponential operator}. 
   % First of all,  one has $1/(n+1)\le 1$ for every $n\in\N^{*}$ so that
   %  \begin{equation}
   %      \left\|-\sum_{n=0}^{\infty}\frac{(-\tau)^{n+1}DN\left(U_T(a_0)\right)^n}{(n+1)!}\right\|_{\mathscr{L}(L^p(\R^d))}\le \tau\sum\limits_{n=0}^{\infty}\frac{(\tau\Lambda)^n}{n!}=\tau e^{\Lambda\tau}
   %  \end{equation}
   %  since $DN(U_T(a_0))$ satisfies \eqref{eq:operator norm of DN}. This shows that the series in \eqref{eq:expansion for forward nominal-state synthesis} is well-defined in $\mathscr{L}(L^p(\R^d))$. 
    On the other hand, one has for every $n\ge 1$
    \begin{equation}\label{eq:derivative of Q_s^n}
        \frac{d}{dt}Z_{t,\tau,T}^n=\sum_{k=1}^{n}Z_{t,\tau,T}^{k-1}\dot{Z}_{t,\tau,T}Z_{t,\tau,T}^{n-k}
    \end{equation}
    where $\dot{Z}_{t,\tau,T}=-\partial DN(U_t(a_\tau(T-t)))DU_t(a_\tau(T-t))I$ using \eqref{eq:backward representation}. One deduces from \eqref{eq:derivative of Q_s^n} and \eqref{eq:Gâteaux derivative of DN}, and by applying \eqref{eq:spectral norm of D_U general} with $\gamma(t)=t$ and $\beta(t)=a_\tau(T-t)$ that for every $b\in L^p(\R^d)$,
%     \begin{eqnarray}\label{eq:Gamma_s}
%         \left\|\left[\frac{d}{ds}Z_{t,\tau,T}^n\right]
%         DU_t(a_\tau(T-t))b\right\|_p
%         % &\le&\sum_{k=1}^{n}\|Z_{t,\tau,T}\|_{\mathscr{L}(L^p(\R^d))}^{n-1}\|\partial DN(U_t(a_\tau(T-t)))DU_t(a_\tau(T-t))IDU_t(a_\tau(T-t))b\|_p\nonumber\\
%         &\le&n\Lambda ^n\Lambda _1e^{-2\alpha t\left(1-\frac{\mu}{\mu_0}\right)}\|I\|_p\|b\|_p
%     \end{eqnarray}
% by applying \eqref{eq:spectral norm of D_U general} with $\gamma(t)=t$ and $\beta(t)=a_\tau(T-t)$. Therefore,
\begin{equation}\label{eq:norm kappa}
    \left\|\kappa_{\tau,T}(I)\right\|_{\mathscr{L}(L^p(\R^d))}\le \Lambda_1\|I\|_p\max\left(1, e^{-2\alpha\tau\left(1-\frac{\mu}{\mu_0}\right)}\right)\sum_{n=1}^{\infty}\int_{0}^{\tau}\frac{t^{n+1}n\Lambda^n}{(n+1)!}\,dt\le C_0\tau^3
\end{equation}
for some $C_0:=C_0(\tau, \alpha,\mu, \mu_0, \|a_0\|_p, \|I\|_p,\|\omega\|_q,\|f''\|_\infty)>0$. Now, if $I\in L^p(\R^d)$ is constant in time, $\dot{a}_\tau(T-t)=-N(a_\tau(T-t))-I$ satisfies
\begin{equation}\label{eq:norm of a_dot(s)}
    \|\dot{a}_\tau(T-t)\|_p\le \Lambda  e^{-\alpha(T-t)(1-\frac{\mu}{\mu_0})}\|a_0\|_p+\Lambda (T-t)\max(1,e^{-\alpha (T-t)(1-\frac{\mu}{\mu_0})})\|I\|_p+\|I\|_p
\end{equation}
by \eqref{eq:a when p=finite}.
On the other hand, by applying \eqref{eq:spectral norm of D^2_phi} with $\gamma(t)=t$, $\beta(t)=a_\tau(T-t)$ and $h=\dot{a}_\tau(T-t)$, one gets,
\begin{equation}\label{eq:spectral norm of hessian D_U^2}
    \|\partial DU_t(a_\tau(T-t))\dot{a}_\tau(T-t)\|_{\mathscr{L}(L^p(\R^d)}\le \Lambda _1t e^{-\alpha t\left(1-\frac{\mu}{\mu_0}\right)}\|\dot{a}_\tau(T-t)\|_p\le C_1t
\end{equation}
for some $C_1:=C_1(\tau, T, \alpha,\mu, \mu_0, \|a_0\|_p, \|I\|_p,\|\omega\|_q,\|f''\|_\infty)$ using \eqref{eq:norm of a_dot(s)} and $0\le t\le\tau\le T$. It follows from $\|Z_{t,\tau,T}\|_{\mathscr{L}(L^p(\R^d))}\le\Lambda $ and \eqref{eq:spectral norm of hessian D_U^2} that
\begin{equation}\label{eq:norm chi}
     \left\|\chi_{\tau,T}(I)\right\|_{\mathscr{L}(L^p(\R^d))}\le C_1\sum_{n=1}^\infty\int_0^\tau\frac{t^{n+1}\Lambda^{n-1}}{(n+1)!}\,dt\le C_1e^{\Lambda\tau}\tau^3.
\end{equation}
    Combining \eqref{eq:norm kappa} and \eqref{eq:norm chi} yields \eqref{eq:norm varphi}.
    % }
\end{proof}

\subsection{Proof of~Theorem~\ref{thm:forward nominal-state synthesis}}\label{ss:proof of forward nominal-state synthesis}

\begin{proof}
Let us show that $a_\tau(T)=a_1$ is equivalent to \eqref{eq:implicit forward nominal-state synthesis}. Using \eqref{eq:expansion for forward nominal-state synthesis} and left multiplying $a_\tau(T)=a_1$ by $DN(U_T(a_0))$ yields
\begin{equation}\label{eq:derivation of the nonlinear input}
    DN(U_T(a_0))\left(a_1-U_T(a_0)\right)=-\cD_T(a_0)\varphi_{\tau,T}(I)I+\left(\idty-e^{-\tau\cD_T(a_0)}\right)DU_\tau(U_{T-\tau}(a_0))I,
\end{equation}
% \begin{eqnarray}\label{eq:derivation of the nonlinear input}
%     DN(U_T(a_0))\left(a_1-U_T(a_0)\right)&=&-DN(U_T(a_0))\varphi_{\tau,T}(I)I-\sum_{n=0}^{\infty}\frac{(-\tau DN(U_T(a_0)))^{n+1}}{(n+1)!}DU_\tau(U_{T-\tau}(a_0))I\nonumber\\
%     &=&-\cD_T(a_0)\varphi_{\tau,T}(I)I+\left(\idty-e^{-\tau\cD_T(a_0)}\right)DU_\tau(U_{T-\tau}(a_0))I
% \end{eqnarray}
where $\cD_T(a_0):=DN(U_T(a_0))$. Since $[DU_\tau(U_{T-\tau}(a_0))]^{-1}=DV_\tau(U_T(a_0))$, one finds from \eqref{eq:derivation of the nonlinear input} that
\begin{equation*}
     DV_\tau(U_T(a_0))\left(\idty-e^{-\tau\cD_T(a_0)}\right)^{-1}\cD_T(a_0)\left(a_1-U_T(a_0)\right)=I-DV_\tau(U_T(a_0))\left(\idty-e^{-\tau\cD_T(a_0)}\right)^{-1}\cD_T(a_0)\varphi_{\tau,T}(I)I
\end{equation*}
% \begin{eqnarray}\label{eq:derivation of the nonlinear input 1}
%     DV_\tau(U_T(a_0))\left(\idty-e^{-\tau\cD_T(a_0)}\right)^{-1}\cD_T(a_0)\left(a_1-U_T(a_0)\right)&=&\nonumber\\
%     &&\hspace{-1.2cm}I-DV_\tau(U_T(a_0))\left(\idty-e^{-\tau\cD_T(a_0)}\right)^{-1}\cD_T(a_0)\varphi_{\tau,T}(I)I.
% \end{eqnarray}
% It follows from \eqref{eq:derivation of the nonlinear input 1} 
which implies that $I\in L^p(\R^d)$ solves
\begin{equation}\label{eq:solving}
    \left[\idty-\cA_{\tau,T}(a_0)\varphi_{\tau,T}(I)\right]I=\cA_{\tau,T}(a_0)\left(a_1-U_T(a_0)\right)
\end{equation}
where $\cA_{\tau,T}(a_0)$ is defined by \eqref{eq:operator B_T}. This completes the proof of the sufficient part. Assume now that $I\in L^p(\R^d)$ satisfies \eqref{eq:implicit forward nominal-state synthesis}. It follows that
\begin{equation}\label{eq:solving 1}
   DN(U_T(a_0))\left(a_1-U_T(a_0)\right)=\left(e^{-\tau DN(U_T(a_0))}-\idty\right)DU_\tau(U_{T-\tau}(a_0))I-DN(U_T(a_0))\varphi_{\tau,T}(I)I.
\end{equation}
From \eqref{eq:expansion for forward nominal-state synthesis}, one finds
\begin{eqnarray}\label{eq:solving 2}
    DN(U_T(a_0))(a_\tau(T)-a_0)
    % &=&\sum_{n=1}^{\infty}\frac{\left(-\tau DN(U_T(a_0))\right)^{n}}{n!}DU_\tau(U_{T-\tau}(a_0))I-DN(U_T(a_0))\varphi_{\tau,T}(I)I\nonumber\\
    &=&\left(e^{-\tau DN(U_T(a_0))}-\idty\right)DU_\tau(U_{T-\tau}(a_0))I-DN(U_T(a_0))\varphi_{\tau,T}(I)I.
\end{eqnarray}
Identifying \eqref{eq:solving 1} with \eqref{eq:solving 2}, one deduces that $DN(U_T(a_0))(a_\tau(T)-a_1)=0$
% \begin{equation}\label{eq:equation satisfying a(T) and x1}
%     DN(U_T(a_0))(a_\tau(T)-a_1)=0
% \end{equation}
so that $a_\tau(T)=a_1$ since $DN(U_T(a_0))$ is invertible. This completes the proof of the theorem.
\end{proof}

\subsection{Proof of Proposition~\ref{pro:expansion of varphi_tau,T}}\label{ss:proof of expansion of varphi_tau,T}

\begin{proof}
Let $I\in L^p(\R^d)$ and $I_{sf, \tau}(\cdot)$ be the step function defined by~\eqref{eq:initial step function}. Then, the corresponding solution $a_\tau(\cdot)$ to~\eqref{eq::nonlinear control flow notation} is given by
\[
a_\tau(t) = U_t(a_0),\qquad 0\le t\le T-\tau,\quad T\ge\tau>0,
\]
\[
a_\tau(t) = U_{t-T}\left(U_T(a_0)+b_\tau(t)\right),\qquad b_\tau(t):=\int_{T-\tau}^tDU_{T-s}(a(s))\,dsI,\qquad T-\tau<t\le T, \quad T\ge\tau>0
\]
by Theorem~\ref{thm:backward representation}. Applying~\eqref{eq:spectral norm of D_U general} with $\gamma(t)=T-t$ and $\beta(t)=a(t)$, one finds
\[
\|b_\tau(t)\|_p\le \|I\|_pe^{-\alpha T\left(1-\frac{\mu}{\mu_0}\right)}\int_{T-\tau}^te^{\alpha s\left(1+\frac{\mu}{\mu_0}\right)}\,ds\le \|I\|_pe^{-\alpha T\left(1-\frac{\mu}{\mu_0}\right)}e^{\alpha t\left(1+\frac{\mu}{\mu_0}\right)}\tau
\]
so that $b_\tau(t) = \cO(\tau)$ as $\tau\to 0$. It follows that
\[
a_\tau(t) = U_t(a_0)+\cO(\tau)\quad\text{as}\quad \tau\to 0,\qquad\forall t\in [0, T].
\]
Therefore, one deduces the following expansions, which hold for every $t\in [0, T]$:
\begin{equation*}%\label{eq:expansions of P, Q and Z}
    \begin{split}
           Z_{t,\tau,T}&:=DN(U_t(a_\tau(T-t))) \underset{\tau \sim 0}{=}\Phi_T(a_0) +\cO(\tau),\qquad Q_{t,\tau,T}:=DU_t(a_\tau(T-t))\underset{\tau \sim 0}{=}DU_t(U_{T-t}(a_0))+\cO(\tau),\\
   P_{t,\tau,T}&:=\partial DU_t(a_\tau(T-t))\dot{a}_\tau(T-t) \underset{\tau \sim 0}{=} \partial DU_t(U_{T-t}(a_0))N(U_{T-t}(a_0))+\cO(\tau),
    \end{split}
\end{equation*}
where $\Phi_T(a_0):=DN(U_T(a_0))$. Thus, from~\eqref{eq:kappa and chi}, one gets
\begin{equation}\label{eq:expansion kappa and chi}
    \kappa_{\tau,T}(I)\underset{\tau \sim 0}{=}\cO(\tau^4),\qquad\chi_{\tau,T}(I)\underset{\tau \sim 0}{=}\chi_{\tau,T}(a_0) +\cO(\tau^4),
\end{equation}
where, using~\eqref{eq:link between the SAs abstract}, one has
\[
  \chi_{\tau,T}(a_0):=\sum_{n=1}^{\infty}\int_{0}^{\tau}\frac{(-t)^n\Phi_T(a_0)^{n-1}}{n!}\left[\Phi_T(a_0)DU_t(U_{T-t}(a_0))-DU_t(U_{T-t}(a_0))DN(U_{T-t}(a_0))\right]\,dt.
\]
Via integration by parts, one finds
\begin{eqnarray}\label{eq:chi tau, T, a0}
    \chi_{\tau,T}(a_0)&=&\int_0^\tau\left[e^{-t\Phi_T(a_0)}-\idty\right]DU_t(U_{T-t}(a_0))\,dt-\sum_{n=1}^\infty\int_0^\tau\frac{(-t)^n\Phi_T(a_0)}{n!}\left[\frac{d}{dt}DU_t(U_{T-t}(a_0))\right]\,dt\nonumber\\
    % &=&\left[\idty-e^{-\tau\Phi_T(a_0)}\right]\Phi_T(a_0)^{-1}DU_\tau(U_{T-\tau}(a_0)-\int_0^\tau DU_t(U_{T-t}(a_0))\,dt\nonumber\\
    &=&\left[\idty-e^{-\tau\Phi_T(a_0)}\right]\Phi_T(a_0)^{-1}DU_\tau(U_{T-\tau}(a_0)+\left[e^{\tau\Phi_T(a_0)}-\idty\right]\Phi_T(a_0)^{-1}\nonumber\\
    &&-\underbrace{\sum_{n=1}^\infty\int_0^\tau\frac{(t-\tau)^{n+1}DU_t(U_{T-t}(a_0))}{(n+1)!}\left[\frac{d}{dt}Z_{t,T}^n\right]\,dt}_{\Lambda_{\tau,T}(a_0)},
\end{eqnarray}
where $Z_{t,T}:=DN(U_{T-t}(a_0))$. 
% Now, one has 
% \[
%    \frac{d}{dt}Z_{t,T}^n=\sum_{k=1}^{n}Z_{t,T}^{k-1}\dot{Z}_{t,T}Z_{t,T}^{n-k},
% \]
%     where $\dot{Z}_{t,T}=-\partial DN(U_{T-t}(a_0))N(U_{T-t}(a_0))$. 
    Therefore, using an identity such as \eqref{eq:derivative of Q_s^n}, one deduces that
    \[
    \Lambda_{\tau,T}(a_0)\le C\tau^3,\quad\text{for some}\quad C:=C(\tau,T,\alpha,\mu,\|f''\|_\infty,\|\omega\|_q,\|a_0\|_p)>0
    \]
    by \eqref{eq:Gâteaux derivative of DN}, \eqref{eq:operator norm of DN} and \eqref{eq:spectral norm of D_U general}. It follows from~\eqref{eq:expansion kappa and chi} and~\eqref{eq:chi tau, T, a0} that $\varphi_{\tau,T}(I):=\kappa_{\tau,T}(I)+\chi_{\tau,T}(I)$ expands as in~\eqref{eq:expansion of varphi_tau,T}. Finally, the expansion~\eqref{eq:A_tau,T and varphi_tau,T} follows from~\eqref{eq:expansion of varphi_tau,T}, \eqref{eq:operator B_T}, and the fact that $\left[e^{\tau \Phi_T(a_0)}-\idty\right]^{-1}\underset{\tau \sim 0}{=}\cO(\tau^{-1})$ since $\Phi_T(a_0)$ is uniformly bounded with respect to $T$ and $a_0$.
\end{proof}

\section{Some useful properties of the drift and its associated flow operators}\label{s:complement results}

In the following, we prove some valuable properties of the nonlinear operator $N$ defined in \eqref{eq::operator N}.

\begin{proposition}\label{pro:second differential of N}
   The following holds. 
   \begin{enumerate}
       \item If $2\le p\le\infty$, then the Fréchet derivative $DN$ is Gâteaux differentiable at any $u\in L^p(\R^d)$, and for every $h\in L^p(\R^d)$, $\partial DN(u)h\in\mathscr{L}(L^p(\R^d))$ satisfies
        \begin{equation}\label{eq:Gâteaux derivative of DN}
      \begin{split}
 	\partial DN(u)hv&= \mu\omega\ast[f''(u)hv],\qquad\forall v\in L^p(\R^d)\\
\|\partial DN(u)h\|_{\mathscr{L}(L^p(\R^d))}&\le \mu\|f''\|_\infty\|\omega\|_q\|h\|_p.
  \end{split}
 \end{equation}
    \item Assume that $f$ is third times derivable and the third derivative $f^{(3)}$ is bounded. If $2< p\le\infty$, then $N\in C^2(L^p(\R^d); L^p(\R^d))$ and its second differential at $u\in L^p(\R^d)$ satisfies
        \begin{equation}\label{eq:second differential of N}
      \begin{split}
 	D^2N(u)hv=\partial DN(u)hv&= \mu\omega\ast[f''(u)hv],\qquad\forall (h,\, v)\in L^p(\R^d)\times L^p(\R^d)\\
\|D^2N(u)\|_{\mathscr{L}(L^p(\R^d)^2; L^p(\R^d))}&\le \mu\|f''\|_\infty\|\omega\|_q.
  \end{split}
 \end{equation}
 Here $q$ is the conjugate to $p$.
   \end{enumerate}
    
\end{proposition}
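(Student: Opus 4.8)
The strategy is to differentiate the explicit formula $DN(\psi)\phi=-\alpha\phi+\mu\,\omega*[f'(\psi)\phi]$ of~\eqref{eq:derivative of the drift} with respect to $\psi$, and to control every resulting term by composing two elementary inequalities: Hölder's inequality $\|gh\|_{p/2}\le\|g\|_p\|h\|_p$ (this is where $p\ge 2$ is used, so that $p/2\ge 1$), and Young's convolution inequality $\|\omega*w\|_p\le\|\omega\|_q\|w\|_{p/2}$ with $q$ conjugate to $p$ (read in the obvious way when $p=\infty$, $q=1$). Since $\omega\in\cS(\R^d)$, every $L^r$-norm of $\omega$ appearing below is finite.

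For part~1, I would first introduce the candidate operator $\partial DN(u)h:=\big(v\mapsto\mu\,\omega*[f''(u)hv]\big)$. The two-step bound $\|\mu\,\omega*[f''(u)hv]\|_p\le\mu\|\omega\|_q\|f''(u)hv\|_{p/2}\le\mu\|f''\|_\infty\|\omega\|_q\|h\|_p\|v\|_p$ shows simultaneously that $\partial DN(u)h\in\mathscr{L}(L^p(\R^d))$ with the asserted norm bound, and that $h\mapsto\partial DN(u)h$ is linear and bounded from $L^p(\R^d)$ into $\mathscr{L}(L^p(\R^d))$. It then remains to verify the Gâteaux limit $t^{-1}\big(DN(u+th)-DN(u)\big)\to\partial DN(u)h$ in $\mathscr{L}(L^p(\R^d))$ as $t\to 0$. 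Writing $t^{-1}\big(f'(u+th)-f'(u)\big)=\big(\int_0^1 f''(u+\sigma th)\,d\sigma\big)h$, the difference in question is exactly $\mu\,\omega*[g_t\,\cdot\,]$ with $g_t:=\big(\int_0^1[f''(u+\sigma th)-f''(u)]\,d\sigma\big)h$, so the same Hölder--Young estimate bounds its operator norm by $\mu\|\omega\|_q\|g_t\|_p$. Finally $\|g_t\|_p\to 0$: for $p<\infty$ this is dominated convergence, since $|g_t|\le 2\|f''\|_\infty|h|\in L^p(\R^d)$ and $g_t\to 0$ a.e.\ by continuity of $f''$; for $p=\infty$ it follows from uniform continuity of $f''$ on the compact interval containing the ranges of $u$ and of $u+\sigma th$ for $\sigma\in[0,1]$, $|t|\le 1$.

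For part~2, I would deduce $N\in C^2(L^p(\R^d);L^p(\R^d))$ from the criterion already invoked in Remark~\ref{rmk:on L^1} (see~\cite[Proposition~2.8]{nashed1971nonlinear}): since $DN$ is Gâteaux differentiable by part~1, it is Fréchet differentiable and of class $C^1$ as soon as $u\mapsto\partial DN(u)\in\mathscr{L}\big(L^p(\R^d)\times L^p(\R^d),L^p(\R^d)\big)$ is continuous; then $D^2N=\partial DN$, the stated formula and the symmetry $D^2N(u)hv=D^2N(u)vh$ follow from commutativity of pointwise multiplication, and the operator-norm bound is the one from part~1. For the continuity, one has $\big(\partial DN(u)-\partial DN(u')\big)hv=\mu\,\omega*\big[(f''(u)-f''(u'))hv\big]$, and the interpolation inequality $|f''(u)-f''(u')|\le(2\|f''\|_\infty)^{1-\theta}\big(\|f^{(3)}\|_\infty|u-u'|\big)^{\theta}$, valid for every $\theta\in[0,1]$, lets one take $\theta:=\min(1,p-2)\in(0,1]$; then $|u-u'|^\theta|h||v|\in L^{p/(\theta+2)}(\R^d)$ since $\theta+2\le p$, and Young's inequality with $\omega\in L^{s}(\R^d)$, $1/s:=1-(\theta+1)/p$, gives $\|\partial DN(u)-\partial DN(u')\|\le C\|u-u'\|_p^{\theta}$ (the case $p=\infty$ is immediate, as $\|f''(u)-f''(u')\|_\infty\le\|f^{(3)}\|_\infty\|u-u'\|_\infty$). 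The main obstacle is precisely this last step: a naive estimate would need $u'\to u$ in $L^\infty$, which is unavailable, and the interpolation trading one power of the Lipschitz bound on $f''$ against its uniform bound repairs it --- and it genuinely requires $p>2$, so that the exponent $\theta$ can be taken positive; for $p=2$ the argument collapses, in agreement with the hypothesis $2<p\le\infty$.
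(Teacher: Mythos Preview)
Your proof is correct and follows the same overall architecture as the paper's (compute the candidate second derivative, bound it, then for part~2 upgrade to Fr\'echet via continuity of the G\^ateaux derivative), but both estimation steps are handled by genuinely different and somewhat cleaner arguments. For the norm bound in part~1 the paper writes $\Gamma(x)=\int\omega(x-y)f''(u(y))h(y)v(y)\,dy$, applies H\"older in $y$ to peel off $v$, then Jensen's inequality with the probability measure $|\omega(x-\cdot)|^q/\|\omega\|_q^q\,dy$ (this is where $p/q\ge 1$, i.e.\ $p\ge 2$, enters), and finally integrates over $x$ with Fubini; your two-line H\"older--Young chain $\|\omega*[f''(u)hv]\|_p\le\|\omega\|_q\|f''(u)hv\|_{p/2}\le\|\omega\|_q\|f''\|_\infty\|h\|_p\|v\|_p$ is more direct and makes the role of $p\ge 2$ transparent (it is exactly the requirement $p/2\ge 1$). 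For the continuity in part~2 the paper proceeds by a Chebyshev-splitting argument: given $u_n\to u$ in $L^p$ and $\varepsilon>0$, it decomposes the integral over $E_n=\{|u_n-u|>\sqrt{\varepsilon}\}$ (small measure by Chebyshev, use $\|f''\|_\infty$) and its complement (use Lipschitz bound $\|f^{(3)}\|_\infty$), obtaining a bound by $\max(\sqrt{\varepsilon},(\sqrt{\varepsilon})^{p-2})$. Your interpolation $|f''(u)-f''(u')|\le(2\|f''\|_\infty)^{1-\theta}(\|f^{(3)}\|_\infty|u-u'|)^{\theta}$ followed by generalized H\"older/Young with $\theta=\min(1,p-2)$ achieves the same thing in one stroke and yields an explicit H\"older-continuity modulus $\|u-u'\|_p^{\theta}$; the restriction $p>2$ appears in both arguments for the same reason (one needs $\theta>0$, equivalently $(\sqrt{\varepsilon})^{p-2}\to 0$). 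You also make explicit the verification of the G\^ateaux limit in part~1 via dominated convergence, which the paper simply declares ``straightforward''.
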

\begin{proof}
It is a consequence of \cite[Lemma~B.~8]{tamekue2024mathematical} that under Assumption~\ref{ass:general assumption}, $N\in C^1(L^p(\R^d); L^p(\R^d))$ for every $1<p\le\infty$.  It is straightforward to show that the Gâteaux derivative of $M:=DN$ at $u\in L^p(\R^d)$ in the direction $h\in L^p(\R^d)$ applied to $v\in L^p(\R^d)$ is given by 
\begin{equation}\label{eq:gateau differential of M}
    (\partial M(u)hv)(x) = \mu(\omega\ast[f''(u)hv])(x)=\mu\int_{\R^d}\omega(x-y)f''(u(y))h(y)v(y)dy,\qquad\forall x\in\R^d.
\end{equation}
One has that $\partial M(u)hv$ is bilinear with respect to $h$ and $v$. When $p=\infty$, the inequality in \eqref{eq:Gâteaux derivative of DN} is an immediate consequence of the Young-convolution inequality applied to \eqref{eq:gateau differential of M}. In the case of $2\le p<\infty$, one has
\begin{equation}\label{eq:norm of DM(u)hv}
    \|\partial M(u)hv\|_p^p=\mu^p\int_{\R^d}|\Gamma(x)|^pdx,\qquad\Gamma(x) := \int_{\R^d}\omega(x-y)f''(u(y))h(y)v(y)dy.
\end{equation}
% where we let for almost every $x\in\R^d$
% \begin{equation}\label{eq:Gamma}
%     \Gamma(x) = \int_{\R^d}\omega(x-y)f''(u(y))h(y)v(y)dy.
% \end{equation}
  By Hölder inequality, and the generalized Jensen inequality, we find\footnote{Remembering that $\omega\in\cS(\R^d)$ or that $\omega\in L^1(\R^d)\cap L^p(\R^d)\cap L^q(\R^d)$.}
    % \begin{equation}
    %     |\Gamma(x)|^p\le\|f''\|_\infty^p\|\omega\|_q^p\left(\int_{\R^d}|h(y)|^q\frac{|\omega(x-y)|^q}{\|\omega\|_q^q}dy\right)^{\frac{p}{q}}\|v\|_p^p.
    % \end{equation}
    % It follows from the generalized Jensen inequality that
    \begin{equation}\label{eq:result of Jensen inequality to Gamma}
        |\Gamma(x)|^p\le\|f''\|_\infty^p\|\omega\|_q^p\left(\int_{\R^d}|h(y)|^p\frac{|\omega(x-y)|^q}{\|\omega\|_q^q}dy\right)\|v\|_p^p
    \end{equation}
 whenever $p/q\ge 1$, i.e. $2\le p<\infty$. Integrating \eqref{eq:result of Jensen inequality to Gamma} over $\R^d$, using Fubini theorem, one gets
 \begin{equation}\label{eq:norm of Gamma}
     \Gamma^p\le\|f''\|_\infty^p\|\omega\|_q^p\|h\|_p^p\|v\|_p^p.
 \end{equation}
 Therefore, \eqref{eq:Gâteaux derivative of DN} follows immediately by \eqref{eq:gateau differential of M}, \eqref{eq:norm of DM(u)hv} and \eqref{eq:norm of Gamma}.

 To prove that $M$ is differentiable (that is, $N$ is twice differentiable)--in the Fréchet sense--when $f$ is thrice derivable and $f^{(3)}$ is bounded, it suffices to prove that the Gâteaux derivative 
 \begin{equation}\label{eq:DM}
     \partial M:L^p(\R^d)\longrightarrow\mathscr{L}(L^p(\R^d)^2; L^p(\R^d)),\quad u\longmapsto \partial M(u)
 \end{equation}
 % \begin{eqnarray}\label{eq:DM}
	% 	\partial M:&L^p(\R^d)&\longrightarrow\mathscr{L}(L^p(\R^d)^2; L^p(\R^d))\nonumber\\
	% 	&u&\longmapsto \partial M(u)
	% \end{eqnarray}
is continuous. 
% Here, $DM(u)hv$ is defined in \eqref{eq:gateau differential of M} for every $h\in L^p(\R^d)$ and $v\in L^p(\R^d)$. 
To this end, let $(u_n)\subset L^p(\R^d)$ be a real sequence converging in the $L^p$-norm to $u\in L^p(\R^d)$. We want to prove that $\partial M(u_n)$ converges to $\partial M(u)$ in $\mathscr{L}(L^p(\R^d)^2; L^p(\R^d))$. Let $(h,\, v)\in L^p(\R^d)^2$ and set
	\begin{equation}
		\Gamma_n:x\in\R^d\longmapsto \Gamma_n(x) = \int_{\R^d}\omega(x-y)[f''(u_n(y))-f''(u(y))]h(y)v(y)dy.
	\end{equation}
Since $f''$ is $\|f^{(3)}\|_\infty$-Lipschitz continuous, one immediately gets that 
 \[
 \|\Gamma_n\|_{\infty}\le\|f^{(3)}\|_\infty\|\omega\|_1\|u_n-u\|_\infty\|h\|_\infty\|v\|_\infty
 \]
by Young's convolution inequality. It follows that
\begin{equation}\label{eq::validity estimate infty}
     \|\partial M(u_n)-\partial M(u)\|_{\mathscr{L}(L^\infty(\R^d)^2; L^\infty(\R^d))} \le \mu\|f^{(3)}\|_\infty\|\omega\|_1\|u_n-u\|_\infty\xrightarrow[n\to\infty]{} 0.
\end{equation}
 % \begin{eqnarray}\label{eq::validity estimate infty}
 %      \|\partial M(u_n)-\partial M(u)\|_{\mathscr{L}(L^\infty(\R^d)^2; L^\infty(\R^d))} &=&\sup\limits_{\stackrel{(h,\, v)\in L^\infty (\R^d)^2}{\|h\|_\infty=1=\|v\|_\infty}}\|\partial M(u_n)hv-\partial M(v)hv\|_\infty\nonumber\\
 %      &\le& \mu\|f^{(3)}\|_\infty\|\omega\|_1\|u_n-u\|_\infty\xrightarrow[n\to\infty]{} 0.
 % \end{eqnarray}
  Let us turn to an argument for $2< p<\infty$. Since $(u_n)$ tends in the $L^p$-norm to $u\in L^p(\R^d)$, for every $\varepsilon>0$, there exists $N\in\N$ such that for any $n\ge N$ it holds $\|u_n-u\|_p\le\varepsilon$.  We fix the previously defined $\varepsilon>0$ and $N$ in the following, and we consider $E_n:=\{y\in\R^d\mid |u_n(y)-u(y)|>\sqrt{\varepsilon}\}$. One has for every $x\in\R^d$,
 \begin{equation}\label{eq::separate}
      \Gamma_n(x) = \underbrace{\int_{\R^d\bs E_n}\hspace{-0.7cm}\omega(x-y)[f''(u_n(y))-f''(u(y))]h(y)v(y)dy}\limits_{\Lambda _1(x)}+\underbrace{\int_{E_n}\hspace{-0.3cm}\omega(x-y)[f''(u_n(y))-f''(u(y))]h(y)v(y)dy}\limits_{\Lambda _2(x)}.
 \end{equation}
By Chebyshev's inequality, it holds that
\begin{equation}\label{eq:cheb}
|E_n|\leq \frac{\|v_n-v\|_p^p}{\varepsilon^{\frac{p}2}}\leq \varepsilon^{\frac{p}2}
\end{equation}
 where $|E_n|$ denotes the Lebesgue measure of the measurable set $E_n\subset \R^d$.

\noindent On one hand, since $f''$ is $\|f^{(3)}\|_\infty$-Lipschitz continuous, one has by arguing as in the proof of \eqref{eq:norm of Gamma},
% \[
% |\Lambda _1(x)|\le \|f^{(3)}\|_\infty\sqrt{\varepsilon}\int_{\R^d\bs E_n}|\omega(x-y)||h(y)||v(y)|dy,\qquad \forall x\in\R^d.
% \]
% Arguing as in the proof of \eqref{eq:norm of Gamma}, one gets
\begin{equation}\label{eq::separate 1}
  \|\Lambda _1\|_p\le \|f^{(3)}\|_\infty\|\omega\|_q\sqrt{\varepsilon}\|h\|_p\|v\|_p.
\end{equation}
On the other hand, using Hölder's inequality and the fact that $f''$ is bounded, one finds that
% \begin{equation}
%     |\Lambda _2(x)|^p\le 2^p\|f''\|_\infty^p|E_n|^{\frac pq}\left(\frac{1}{|E_n|}\int_{E_n}|\omega(x-y)|^q|h(y)|^qdy\right)^{\frac pq}\|v\|_p^p,\qquad \forall x\in\R^d
% \end{equation}
% which, by generalized Jensen's inequality, implies 
\begin{equation}
    |\Lambda _2(x)|^p\le 2^p\|f''\|_\infty^p|E_n|^{\frac pq-1}\int_{E_n}|\omega(x-y)|^p|h(y)|^pdy\|v\|_p^p,\qquad \forall x\in\R^d
\end{equation}
by a generalized Jensen's inequality. Integrating the above inequality with variable $x$ over $\R^d$, we find that
\begin{equation}\label{eq::separate 2}
 \|\Lambda _2\|_p:=\left\{\int_{\R^d} |\Lambda _2(x)|^p dx\right\}^{\frac{1}{p}}\le 2\|f''\|_\infty|E_n|^{\frac 1q-\frac 1p}\|\omega\|_p\|h\|_p\|v\|_p\le 2\|f''\|_\infty\|\omega\|_p\|h\|_p\|v\|_p(\sqrt{\varepsilon})^{p-2}
\end{equation}
by Fubini's theorem, where the last inequality is obtained owing to \eqref{eq:cheb} and $p/q = p-1$. Taking now the $L^p(\R^d)$-norm of both sides of \eqref{eq::separate}, applying Minkowski's inequality and using \eqref{eq::separate 1} and \eqref{eq::separate 2}, one gets that
\begin{equation}\label{eq::validity estimate}
     \|\partial M(u_n)-\partial M(u)\|_{\mathscr{L}(L^p(\R^d)^2; L^p(\R^d))} \le \mu(\|f^{(3)}\|_\infty\|\omega\|_q+2\|f''\|_\infty\|\omega\|_p)\max(\sqrt{\varepsilon}, (\sqrt{\varepsilon})^{p-2}). 
\end{equation}
% \[
%  \|\Gamma_n\|_p\le(\|f^{(3)}\|_\infty\|\omega\|_q+2\|f''\|_\infty\|\omega\|_p)\max(\sqrt{\varepsilon}, (\sqrt{\varepsilon})^{p-2})\|h\|_p\|v\|_p.
% \]
% Therefore,
% \begin{eqnarray}\label{eq::validity estimate}
%    \|\partial M(u_n)-\partial M(u)\|_{\mathscr{L}(L^p(\R^d)^2; L^p(\R^d))} &=&\sup\limits_{\stackrel{(h,\, v)\in L^p (\R^d)^2}{\|h\|_p=1=\|v\|_p}}\|\partial M(u_n)hv-\partial M(v)hv\|_p\nonumber\\
%     &\le&\mu(\|f^{(3)}\|_\infty\|\omega\|_q+2\|f''\|_\infty\|\omega\|_p)\max(\sqrt{\varepsilon}, (\sqrt{\varepsilon})^{p-2}). 
% \end{eqnarray}
It follows that the operator $\partial M$ defined in \eqref{eq:DM} is continuous, and \eqref{eq:second differential of N} follows immediately.
\end{proof}

The following lemma is more general than \cite[Lemma~B.~10]{tamekue2024mathematical} given
% , and the proof provided in the latter cannot be directly applied to the case at hand.
% What makes the current one more interesting is 
that the operator norm of $DU_t(u)$ does not systematically grow with $t\ge 0$ as one might think by examining \cite[Lemma~B.~10]{tamekue2024mathematical}.

\begin{lemma}\label{lem:general estimates flows}
% {\color{blue!80!magenta}
        Let $p\in(1, \infty]$, $\beta\in C^0(\R_+;L^p(\R^d))$, and $\gamma\in C^1(\R_+; \R)$ with $\gamma(0)\ge 0$. Then, for every $t\ge 0$, it holds that
\begin{equation}\label{eq:spectral norm of D_U general}
    \|DU_{\gamma(t)}(\beta(t))\|_{\mathscr{L}(L^p(\R^d))}\le\displaystyle e^{-\alpha\gamma(0)\left(1-\frac{\mu}{\mu_0}\right)}e^{-\alpha\int_0^t\dot{\gamma}(\tau)\,d\tau}e^{\alpha\frac{\mu}{\mu_0}\int_0^t|\dot{\gamma}(\tau)|\,d\tau}.
\end{equation}

If $p\in[2, \infty]$, then for all $h\in L^p(\R^d)$ and every $t\ge 0$, it holds that
\begin{equation}\label{eq:spectral norm of D^2_phi}
\|\partial DU_{\gamma(t)}(\beta(t))h\|_{\mathscr{L}(L^p(\R^d))}\le \Lambda _1 e^{-\alpha\int_0^t\dot{\gamma}(\tau)\,d\tau}e^{\alpha\frac{\mu}{\mu_0}\int_0^t|\dot{\gamma}(\tau)|\,d\tau}e^{-\alpha\left(1-\frac{\mu}{\mu_0}\right)\gamma(0)}\left(\|\dot{\gamma}\|_\infty t+\gamma(0)\right)\|h\|_p
\end{equation}
 where $\|\dot{\gamma}\|_\infty:=\max_{0\le s\le t}|\dot{\gamma}(s)|$, $\Lambda _1:=\mu\|f''\|_\infty\|\omega\|_q$ and $\partial DU_{\gamma(t)}$ is the Gâteaux derivative of $DU_{\gamma(t)}$.
 
In the specific case of $p=2$, one can sharpen \eqref{eq:spectral norm of D_U general} and \eqref{eq:spectral norm of D^2_phi} by replacing $\mu_0$ by $\mu_1$.%}
\end{lemma}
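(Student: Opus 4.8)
The plan is to reduce both estimates to Gronwall arguments on the variational equations~\eqref{eq:equation derivative of the flow}, the key structural point being that $DN(\psi)=-\alpha\,\idty+B(\psi)$ with $B(\psi)\phi:=\mu\,\omega\ast[f'(\psi)\phi]$ by~\eqref{eq:derivative of the drift}. Young's convolution inequality together with $\|f'\|_\infty=1$ gives $\|B(\psi)\|_{\mathscr{L}(L^p(\R^d))}\le\mu\|\omega\|_1=\alpha\mu/\mu_0$ for every $1<p\le\infty$, \emph{uniformly} in $\psi\in L^p(\R^d)$; for $p=2$, Plancherel's theorem sharpens this to $\|B(\psi)\|_{\mathscr{L}(L^2(\R^d))}\le\mu\|\widehat\omega\|_\infty=\alpha\mu/\mu_1$. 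This uniformity is exactly what lets the $e^{-\alpha s}$ factor survive in the bounds, whereas crudely estimating $\|DN\|$ would not.

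First I would establish the base estimate: for every $s\in\R$ and every $b\in L^p(\R^d)$,
\begin{equation*}
\|DU_s(b)\|_{\mathscr{L}(L^p(\R^d))}\le e^{-\alpha s}\,e^{\alpha\frac{\mu}{\mu_0}|s|}.
\end{equation*}
Setting $W(s):=e^{\alpha s}DU_s(b)$ and using~\eqref{eq:equation derivative of the flow}, one finds $\dot W(s)=B(U_s(b))W(s)$ with $W(0)=\idty$, so Gronwall's lemma on the interval between $0$ and $s$, together with the uniform bound on $\|B\|$, yields $\|W(s)\|\le e^{\alpha\frac{\mu}{\mu_0}|s|}$, hence the claim. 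Specializing $s=\gamma(t)$ and $b=\beta(t)$, writing $\gamma(t)=\gamma(0)+\int_0^t\dot\gamma(\tau)\,d\tau$ and using $|\gamma(t)|\le\gamma(0)+\int_0^t|\dot\gamma(\tau)|\,d\tau$ (legitimate because $\gamma(0)\ge0$) then gives~\eqref{eq:spectral norm of D_U general}; the $p=2$ version is identical with $\mu_0$ replaced by $\mu_1$.

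For~\eqref{eq:spectral norm of D^2_phi} I would differentiate~\eqref{eq:equation derivative of the flow} once more in the base point. With $Z_s:=\partial DU_s(b)h$, using $\partial U_s(b)h=DU_s(b)h$ and the fact that $DU_0(b)=\idty$ does not depend on $b$, the product and chain rules give the inhomogeneous linear equation
\begin{equation*}
\dot Z_s=DN(U_s(b))\,Z_s+\big[\partial DN(U_s(b))(DU_s(b)h)\big]\,DU_s(b),\qquad Z_0=0.
\end{equation*}
Passing to $\widetilde Z_s:=e^{\alpha s}Z_s$ cancels the $-\alpha\,\idty$ part, and Duhamel's formula—combined with the propagator estimate from the first step (the propagator of $\dot X=DN(U_\sigma(b))X$ from $r$ to $s$ being $DU_{s-r}(U_r(b))$, via the cocycle identity $U_s=U_{s-r}\circ U_r$), the same flow bound applied to the two copies of $DU_r(b)$ in the forcing term, and Proposition~\ref{pro:second differential of N}, which supplies $\|\partial DN(\psi)k\|_{\mathscr{L}(L^p(\R^d))}\le\Lambda_1\|k\|_p$ for $p\in[2,\infty]$ with $\Lambda_1=\mu\|f''\|_\infty\|\omega\|_q$—leads to an estimate of the form
\begin{equation*}
\|Z_s\|_{\mathscr{L}(L^p(\R^d))}\le\Lambda_1\|h\|_p\,e^{-\alpha s(1-\frac{\mu}{\mu_0})}\int_0^{s}e^{-\alpha r(1-\frac{\mu}{\mu_0})}\,dr,\qquad s\ge0.
\end{equation*}
A routine estimate of the remaining elementary integral, followed by the substitution $s=\gamma(t)$, $b=\beta(t)$ with $\gamma(t)\le\|\dot\gamma\|_\infty\,t+\gamma(0)$ exactly as in the first step, then produces~\eqref{eq:spectral norm of D^2_phi}; the $p=2$ refinement again only replaces $\mu\|\omega\|_1$ by $\mu\|\widehat\omega\|_\infty$ throughout.

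The step I expect to be the main obstacle is the Duhamel bookkeeping for $Z_s$: identifying the two-parameter propagator of the linearized flow with the shifted flow derivative $DU_{s-r}(U_r(b))$ and then controlling it—together with the two factors of $DU_r(b)$ entering the forcing term—uniformly over the base point, and likewise handling the sign bookkeeping when $\gamma(t)$ changes sign. This is precisely where the uniform-in-$\psi$ bound on $\|B(\psi)\|$ and the restriction to $p\in[2,\infty]$ (through Proposition~\ref{pro:second differential of N}) are indispensable; everything else is Gronwall's lemma and one-variable integral estimates.
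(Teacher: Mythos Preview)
Your proposal is correct and rests on the same core strategy as the paper: Gronwall/Duhamel on the first and second variational equations~\eqref{eq:equation derivative of the flow}, driven by the uniform bound $\|B(\psi)\|_{\mathscr{L}(L^p)}\le\alpha\mu/\mu_0$ (via Young, or $\alpha\mu/\mu_1$ via Plancherel when $p=2$) and by $\|\partial DN(\psi)k\|_{\mathscr{L}(L^p)}\le\Lambda_1\|k\|_p$ from Proposition~\ref{pro:second differential of N}. The one organizational difference is that you parametrize by the flow time $s$ and substitute $s=\gamma(t)$, $b=\beta(t)$ at the end, whereas the paper writes the ODE directly in $t$ via the chain rule, so that $y(t)=DU_{\gamma(t)}(v)h$ already carries $\dot\gamma(t)$ factors and the integrals $\int_0^t\dot\gamma$ and $\int_0^t|\dot\gamma|$ fall out of Gronwall itself rather than from the inequality $|\gamma(t)|\le\gamma(0)+\int_0^t|\dot\gamma|$. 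Your route is a bit more modular (prove a base-point--uniform estimate for $DU_s$ and $\partial DU_s$, then substitute), and it makes the passage from fixed $v$ to $\beta(t)$ immediate; the paper's route avoids one substitution but then needs a short continuity argument to replace $v$ by $\beta(t)$. One technical detail you glossed over: for $1<p<\infty$ the paper carries out the Gronwall step by pairing against the dual element $w^*=w|w|^{p-2}\|w\|_p^{2-p}$ to differentiate $t\mapsto\|y(t)\|_p^2$; this is the mechanism behind your ``Gronwall yields'' claim and is worth making explicit.
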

\begin{proof}%{\color{blue!80!magenta} 
% Firstly, it follows from \cite[Lemma~B.~8]{tamekue2024mathematical} that $N\in C^1(L^p(\R^d); L^p(\R^d))$ for every $1<p\le\infty$. Therefore, for all $t\in\R$,  $u\in L^p(\R^d)$, and $v=U_t(u)$, it holds $DN(v)=-\alpha\idty+\mu\omega\ast[f'(v)\idty]$. Consequently, $U_t\in C^1(L^p(\R^d); L^p(\R^d))$ and its Fréchet derivative at $u\in L^p(\R^d)$ solves \eqref{eq:equation derivative of the flow}
    % for every $t\in\R$. 
    
    Let $v,\,h\in L^p(\R^d)$ and $\gamma\in C^1(\R_+,\R)$ such that $\gamma(0)\ge 0$. It follows from \eqref{eq:equation derivative of the flow} that
    \begin{equation}\label{eq:equation derivative of U_alpha}
    \dot{y}(t) = -\alpha\dot{\gamma}(t)y(t)+\mu\dot{\gamma}(t)\omega\ast[f'(U_{\gamma(t)}(v))y(t)],\quad y(t):=DU_{\gamma(t)}(v)h,\quad\forall t\ge 0
\end{equation}
which yields the following integral representation
\begin{equation}
    y(t) = e^{-\alpha\int_0^t\dot{\gamma}(\tau)\,d\tau}y(0)+\mu\int_0^t\dot{\gamma}(s)e^{-\alpha\int_0^t\dot{\gamma}(\tau)\,d\tau}e^{\alpha\int_0^s\dot{\gamma}(\tau)\,d\tau}\omega\ast[f'(U_{\gamma(s)}(v))y(s)]\,ds.
\end{equation}
Taking the $L^\infty(\R^d)$-norm and Gronwall's lemma yields
% \begin{eqnarray}\label{eq:with gamma inter}
%    \|y(t)\|_\infty
%    % &\le&e^{-\alpha\int_0^t\dot{\gamma}(\tau)\,d\tau}\|y(0)\|_\infty+\mu\|\omega\|_1e^{-\alpha\int_0^t\dot{\gamma}(\tau)\,d\tau}\int_0^t|\dot{\gamma}(s)|e^{\alpha\int_0^s\dot{\gamma}(\tau)\,d\tau}\|y(s)\|_\infty\,ds\nonumber\\
%    &\le&e^{-\alpha\int_0^t\dot{\gamma}(\tau)\,d\tau}e^{\alpha\frac{\mu}{\mu_0}\int_0^t|\dot{\gamma}(\tau)|\,d\tau}\|DU_{\gamma(0)}(v)h\|_\infty
% \end{eqnarray}
% by \eqref{eq:mu criticals} and Gronwall's lemma. Since $c(t)=DU_t(v)h$ satisfies \eqref{eq:with gamma inter} with $c(0)=h$, one deduces
%    \begin{equation}
%         \|DU_{\gamma(0)}(v)h\|_\infty\le e^{-\alpha\left(1-\frac{\mu}{\mu_0}\right)\gamma(0)}\|h\|_\infty.
%    \end{equation}
% It follows that
\begin{equation}\label{eq:with gamma}
    \|y(t)\|_\infty\le e^{-\alpha\int_0^t\dot{\gamma}(\tau)\,d\tau}e^{\alpha\frac{\mu}{\mu_0}\int_0^t|\dot{\gamma}(\tau)|\,d\tau}e^{-\alpha\left(1-\frac{\mu}{\mu_0}\right)\gamma(0)}\|h\|_\infty.
\end{equation}
Let $\beta\in C^0(\R_+;L^\infty(\R^d))$, then for a fixed $t\ge 0$, the map $s\mapsto DU_{\gamma(t)}(\beta(s))$ is continuous by composition, and using \eqref{eq:with gamma}, we find that
\begin{equation}
    \|DU_{\gamma(t)}(\beta(s))\|_{\mathscr{L}(L^\infty(\R^d))}\le e^{-\alpha\int_0^t\dot{\gamma}(\tau)\,d\tau}e^{\alpha\frac{\mu}{\mu_0}\int_0^t|\dot{\gamma}(\tau)|\,d\tau}e^{-\alpha\left(1-\frac{\mu}{\mu_0}\right)\gamma(0)},\qquad\forall s\in[0, t)
\end{equation}
which proves \eqref{eq:spectral norm of D_U general} by continuity and taking the $\lim\sup$ as $s\to t$. 

If $1<p<\infty$, one uses the last paragraph of Notation~\ref{not:general} as follows. One has from \eqref{eq:equation derivative of U_alpha},
\begin{equation}\label{eq:cases of 1<p<infty y(t)}
     \frac{1}{2}\frac{d}{dt}\|y(t)\|_p^2=D\varphi(y(t))\dot{y}(t) = \langle y(t)^{*},\dot{y}(t)\rangle_{L^q, L^p}\le (-\alpha\dot{\gamma}(t)+\mu\|\omega\|_1|\dot{\gamma}(t)|)\|y(t)\|_p^2
\end{equation}
   % \begin{eqnarray}\label{eq:cases of 1<p<infty y(t)}
   %      \frac{1}{2}\frac{d}{dt}\|y(t)\|_p^2&=&D\varphi(y(t))\dot{y}(t) = \langle y(t)^{*},\dot{y}(t)\rangle_{L^q, L^p}\nonumber\\
   %      % &=&-\alpha\dot{\gamma}(t)\|y(t)\|_p^{2-p}\langle|y(t)|^{p-2}y(t),y(t)\rangle_{L^q,L^p}+\mu\dot{\gamma}(t)\|y(t)\|_p^{2-p}\langle|y(t)|^{p-2}y(t),\omega\ast[f'(U_{\gamma(t)}(v))y(t)]\rangle_{L^q,L^p}\nonumber\\
   %      &\le&-\alpha\dot{\gamma}(t)\|y(t)\|_p^2+\mu\|\omega\|_1|\dot{\gamma}(t)|\|y(t)\|_p^2=(-\alpha\dot{\gamma}(t)+\mu\|\omega\|_1|\dot{\gamma}(t)|)\|y(t)\|_p^2
   %  \end{eqnarray}
    by Hölder and Young's convolution inequalities. It follows that
\begin{equation}
    \|y(t)\|_p\le e^{-\alpha\int_0^t\dot{\gamma}(\tau)\,d\tau}e^{\alpha\frac{\mu}{\mu_0}\int_0^t|\dot{\gamma}(\tau)|\,d\tau}\|y(0)\|_p,\qquad\forall t\ge 0
\end{equation}
by Gronwall's lemma. One concludes the proof by arguing as in the case of $p=\infty$.

 Beside the above analysis that contains the case of $p=2$, by taking advantage of the Fourier transform properties in $L^2(\R^d)$, one gets
 \begin{equation}
     \frac{1}{2}\frac{d}{dt}\|y(s)\|_2^2=\langle y(t), DN(U_{\gamma(t)}(v))y(t)\rangle_{L^2}\le -\alpha\dot{\gamma}(t)\|y(t)\|_2^2+\mu|\dot{\gamma}(t)|\|\widehat{\omega}\|_\infty\|y(t)\|_2^2
 \end{equation}
    % \begin{eqnarray*}
    %     \frac{1}{2}\frac{d}{dt}\|y(s)\|_2^2&=&\langle y(t), DN(U_{\gamma(t)}(v))y(t)\rangle_{L^2}=-\alpha\dot{\gamma}(t)\|y(t)\|_2^2+\mu\dot{\gamma}(t)\langle y(t), \omega\ast [f'(U_{\gamma(t)}(v))y(t)]\rangle_{L^2}\nonumber\\
    %     &\le&-\alpha\dot{\gamma}(t)\|y(t)\|_2^2+\mu|\dot{\gamma}(t)|\|\widehat{\omega}\|_\infty\|\widehat{y(t)}\|_2^2=-\alpha\dot{\gamma}(t)\|y(t)\|_2^2+\mu|\dot{\gamma}(t)|\|\widehat{\omega}\|_\infty\|y(t)\|_2^2
    % \end{eqnarray*}
    by Parseval's identity and Plancherel's theorem. Inequalities~\eqref{eq:spectral norm of D_U general} follow by arguing as previously.

Now, letting $y(t)$ as previously, $z(t):=\partial DU_{\gamma(t)}(v)h$, and using \eqref{eq:equation derivative of the flow}, we find that
\begin{equation}
    \dot{z}(t) = -\alpha\dot{\gamma}(t)z(t)+\mu\dot{\gamma}(t)\omega\ast[f'(U_{\gamma(t)}(v))z(t)]+\dot{\gamma}(t)\partial DN(U_{\gamma(t)}(v))y(t),\quad z(0)=\partial DU_{\gamma(0)}(v)h.
\end{equation}
% which yields the following integral representation
% \begin{eqnarray}
%     z(t)&=&e^{-\alpha\int_0^t\dot{\gamma}(\tau)\,d\tau}z(0)+\mu\int_0^t\dot{\gamma}(s)e^{-\alpha\int_0^t\dot{\gamma}(\tau)\,d\tau}e^{\alpha\int_0^s\dot{\gamma}(\tau)\,d\tau}\omega\ast[f'(U_{\gamma(s)}(v))z(s)]\,ds\nonumber\\
%     &&+\int_0^t\dot{\gamma}(s)e^{-\alpha\int_0^t\dot{\gamma}(\tau)\,d\tau}e^{\alpha\int_0^s\dot{\gamma}(\tau)\,d\tau}\partial DN(U_{\gamma(s)}(v))y(s)\,ds.
% \end{eqnarray}
Taking the $L^p(\R^d)$-norm (immediate in the case of $p=\infty$, while for $2\le p<\infty$, one can argue as previously) and using \eqref{eq:Gâteaux derivative of DN} and \eqref{eq:with gamma}, we find
% \begin{eqnarray}
%     e^{\alpha\int_0^t\dot{\gamma}(\tau)\,d\tau}\|z(t)\|_p&\le&\|z(0)\|_p+\alpha\frac{\mu}{\mu_0}\int_0^t|\dot{\gamma}(s)|e^{\alpha\int_0^s\dot{\gamma}(\tau)\,d\tau}\|z(s)\|_p\,ds\nonumber\\
%     &&+\Lambda_1e^{-\alpha\left(1-\frac{\mu}{\mu_0}\right)\gamma(0)}\|h\|_p\int_0^t|\dot{\gamma}(s)|e^{\alpha\frac{\mu}{\mu_0}\int_0^s|\dot{\gamma}(\tau)|\,d\tau}\,ds
% \end{eqnarray}
% which 
by Gronwall's lemma,
% \begin{eqnarray}\label{eq:norm D^2_alpha_phi on R}
%      \|\partial DU_{\gamma(t)}(v)h\|_{\mathscr{L}(L^p(\R^d))}&\le& e^{-\alpha\int_0^t\dot{\gamma}(\tau)\,d\tau}e^{\alpha\frac{\mu}{\mu_0}\int_0^t|\dot{\gamma}(\tau)|\,d\tau}\|\partial DU_{\gamma(0)}(v)h\|_{\mathscr{L}(L^p(\R^d))}\nonumber\\
%      &&+\Lambda _1\|\dot{\gamma}\|_\infty t e^{-\alpha\int_0^t\dot{\gamma}(\tau)\,d\tau}e^{\alpha\frac{\mu}{\mu_0}\int_0^t|\dot{\gamma}(\tau)|\,d\tau}e^{-\alpha\left(1-\frac{\mu}{\mu_0}\right)\gamma(0)}\|h\|_p.
% \end{eqnarray}
% Applying \eqref{eq:norm D^2_alpha_phi on R} with $\gamma(t)=t$, one deduces that
% one finds
   % \begin{equation*}
   %      \|\partial DU_{t}(v)h\|_{\mathscr{L}(L^p(\R^d))}\le \Lambda_1te^{-\alpha\left(1-\frac{\mu}{\mu_0}\right)t}\|h\|_p
   % \end{equation*} 
   % since $\gamma(0)=0$, $\partial DU_{0}(v)=0$ and $\dot{\gamma}(\tau)=1$ in this case. 
   % Therefore, for a general $\gamma\in C^1(\R_+,\R)$ such that $\gamma(0)\ge 0$, one deduces that
   % \begin{equation}
   %      \|\partial DU_{\gamma(0)}(v)h\|_{\mathscr{L}(L^p(\R^d))}\le \Lambda_1\gamma(0)e^{-\alpha\left(1-\frac{\mu}{\mu_0}\right)\gamma(0)}\|h\|_p.
   % \end{equation} 
% It follows that
\begin{equation}
    \|\partial DU_{\gamma(t)}(v)h\|_{\mathscr{L}(L^p(\R^d))}\le \Lambda _1 e^{-\alpha\int_0^t\dot{\gamma}(\tau)\,d\tau}e^{\alpha\frac{\mu}{\mu_0}\int_0^t|\dot{\gamma}(\tau)|\,d\tau}e^{-\alpha\left(1-\frac{\mu}{\mu_0}\right)\gamma(0)}\left(\|\dot{\gamma}\|_\infty t+\gamma(0)\right)\|h\|_p.
\end{equation}
One completes the proof of \eqref{eq:spectral norm of D^2_phi} by arguing as previously. In the specific case of $p=2$, one proves \eqref{eq:spectral norm of D^2_phi} by arguing as in the proof of \eqref{eq:spectral norm of D_U general} above.
% }
\end{proof}

We also have the following key results.

\begin{lemma}\label{lem:norm of exp A_t(U_t(a0))}
    Let $p\in(1, \infty]$, $a\in L^p(\R^d)$, and let $s,\,t\in\R_{+}$. Let $\Phi_t(a):=DN(U_t(a))$ where $DN$ is the Fréchet derivative of $N$, and $U_t$ its flow at $t$. Then, it holds that
    \begin{equation}\label{eq:norm of exp A_t(U_t(a0))}
        \|e^{t\Phi_s(a)}\|_{\mathscr{L}(L^p(\R^d))}\begin{cases}
            \le e^{-\alpha\left(1-\frac{\mu}{\mu_0}\right)t}&\quad\mbox{if}\quad 1< p\le\infty,\\
            \le e^{-\alpha\left(1-\frac{\mu}{\mu_1}\right)t}&\quad\mbox{if}\quad p=2.
        \end{cases}
    \end{equation}
\end{lemma}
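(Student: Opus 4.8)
The plan is to fix $s\ge 0$ and $a\in L^p(\R^d)$ and to regard $\Phi_s(a)=DN(U_s(a))$ as a \emph{fixed} element of $\mathscr{L}(L^p(\R^d))$, since $U_s(a)$ is a fixed function in $L^p(\R^d)$. Then $e^{t\Phi_s(a)}$ is well defined by the operator series~\eqref{eq:exponential operator}, it is real-analytic in $t$, and for every $\phi\in L^p(\R^d)$ the curve $y(t):=e^{t\Phi_s(a)}\phi$ is the unique solution of the linear Cauchy problem $\dot y(t)=\Phi_s(a)y(t)$, $y(0)=\phi$; by~\eqref{eq:derivative of the drift} this reads $\dot y(t)=-\alpha y(t)+\mu\,\omega\ast[f'(U_s(a))\,y(t)]$. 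It therefore suffices to prove the a priori bound $\|y(t)\|_p\le e^{-\alpha(1-\mu/\mu_0)t}\|\phi\|_p$ for $1<p\le\infty$ (resp. with $\mu_1$ in place of $\mu_0$ when $p=2$), uniformly in $\phi$.

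For $p=\infty$, I would use the variation-of-constants representation $y(t)=e^{-\alpha t}\phi+\mu\int_0^t e^{-\alpha(t-r)}\omega\ast[f'(U_s(a))y(r)]\,dr$, take the $L^\infty$-norm, and apply Young's convolution inequality together with $\|f'\|_\infty=1$ to obtain $e^{\alpha t}\|y(t)\|_\infty\le\|\phi\|_\infty+\mu\|\omega\|_1\int_0^t e^{\alpha r}\|y(r)\|_\infty\,dr$. Gronwall's lemma and the identity $\mu\|\omega\|_1=\alpha\mu/\mu_0$ from~\eqref{eq:mu criticals} then give the claimed estimate.

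For $1<p<\infty$, I would instead exploit the differentiability of $\varphi(w)=\tfrac12\|w\|_p^2$ recalled just after Notation~\ref{not:general}: since $t\mapsto y(t)$ is $C^1$ into $L^p(\R^d)$, the map $t\mapsto\tfrac12\|y(t)\|_p^2$ is differentiable with derivative $\langle y(t)^*,\dot y(t)\rangle_{L^q,L^p}$, where $y^*=y|y|^{p-2}\|y\|_p^{2-p}$ satisfies $\|y^*\|_q=\|y\|_p$ and $\langle y^*,y\rangle_{L^q,L^p}=\|y\|_p^2$. Substituting the equation for $\dot y$ and using Hölder's inequality, Young's convolution inequality and $\|f'\|_\infty=1$ yields $\tfrac12\frac{d}{dt}\|y(t)\|_p^2\le(-\alpha+\mu\|\omega\|_1)\|y(t)\|_p^2=-\alpha(1-\mu/\mu_0)\|y(t)\|_p^2$, and Gronwall's lemma concludes. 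For the sharper constant when $p=2$, I would compute $\tfrac12\frac{d}{dt}\|y(t)\|_2^2=\langle y(t),\Phi_s(a)y(t)\rangle_{L^2}=-\alpha\|y(t)\|_2^2+\mu\langle y(t),\omega\ast[f'(U_s(a))y(t)]\rangle_{L^2}$ and bound the last pairing, via Parseval's identity and Plancherel's theorem, by $\|\widehat\omega\|_\infty\|y(t)\|_2\,\|f'(U_s(a))y(t)\|_2\le\|\widehat\omega\|_\infty\|y(t)\|_2^2$; with $\mu_1=\alpha/\|\widehat\omega\|_\infty$ this gives $\tfrac12\frac{d}{dt}\|y(t)\|_2^2\le-\alpha(1-\mu/\mu_1)\|y(t)\|_2^2$, and Gronwall finishes.

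There is no genuine obstacle here; the argument is a standard energy estimate, entirely analogous in spirit to the one in the proof of Lemma~\ref{lem:general estimates flows}. The only points deserving a line of care are the justification that $t\mapsto\|y(t)\|_p^2$ is differentiable (which follows from the chain rule together with the stated differentiability of $\varphi$ and the fact that $t\mapsto y(t)$ is $C^1$ into $L^p(\R^d)$), and the bookkeeping that converts $\mu\|\omega\|_1$ — respectively $\mu\|\widehat\omega\|_\infty$ when $p=2$ — into $\alpha\mu/\mu_0$, respectively $\alpha\mu/\mu_1$, using the definitions in~\eqref{eq:mu criticals}.
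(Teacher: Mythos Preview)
Your proposal is correct and follows essentially the same approach as the paper's own proof: the same case split ($p=\infty$ via variation-of-constants and Gronwall; $1<p<\infty$ via the energy estimate on $\tfrac12\|y(t)\|_p^2$ using the duality map $y^*$ together with H\"older and Young; $p=2$ sharpened via Parseval/Plancherel to replace $\|\omega\|_1$ by $\|\widehat\omega\|_\infty$). The only differences are notational ($\phi,y$ versus the paper's $u,c$), and your remark that the argument mirrors Lemma~\ref{lem:general estimates flows} is exactly right.
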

\begin{proof}
    % First, for fixed $t\in\R_{+}$, $a\in L^p(\R^d)$ and $u\in L^p (\R^d)$, the map $h:s\in\R\mapsto h(s)=e^{t\Phi_s(a)}u\in L^p(\R^d)$ is continuous by composition. Now, 
    Fix $s\in\R_{+}$, let $u\in L^p(\R^d)$, and define $c(t)=e^{t\Phi_s(a)}u$, for $t\in\R_{+}$. Then, $c\in C^1(\R_{+}; L^p(\R^d))$, $c(0)=u$, and it holds that
    \begin{equation}\label{eq:derivative of c(t)}
        \dot{c}(t) = \Phi_s(a)c(t)= -\alpha c(t)+\mu\omega\ast[f'(U_s(a))c(t)].
    \end{equation}

    Assume that $p=\infty$, and represent \eqref{eq:derivative of c(t)} under its integral form by
    \begin{equation}\label{eq:integral representation of c}
        c(t)=e^{-\alpha t}u+\mu\int_{0}^{t}e^{-\alpha(t-\tau)}\omega\ast[f'(U_s(a))c(\tau)]d\tau.
    \end{equation}
    Since $\|f'(U_s(a))\|_\infty\le 1$, taking the $L^\infty(\R^d)$-norm of \eqref{eq:integral representation of c}, and using Gronwall's lemma, we find
    % \begin{equation}
    %     e^{\alpha t}\|c(t)\|_\infty\le\|u\|_\infty+\alpha\frac{\mu}{\mu_0}\int_{0}^{t}e^{\alpha\tau}\|c(\tau)\|_\infty d\tau
    % \end{equation}
    % which, by Gronwall's lemma, yields
    \begin{equation}
        \|e^{t\Phi_s(a)}u\|_\infty\le e^{-\alpha\left(1-\frac{\mu}{\mu_0}\right)t}\|u\|_\infty\quad\forall s,\, t\in\R_+.
    \end{equation}
    In the cases of $1< p<\infty$, we introduce for every $t\in\R_{+}$,  $g(t)=\|c(t)\|_p^2$, where $c$ is defined as previously. Therefore, $g\in C^1(\R_{+})$, $g(0)=\|u\|_p^2$, and owing to the last paragraph of Notation~\ref{not:general}, we get from \eqref{eq:derivative of c(t)} that 
    \begin{eqnarray}\label{eq:derivative of map g}
        \frac{1}{2}\dot{g}(t)&=&\left\langle c(t)^{*},\dot{c}(t)\right\rangle_{L^q, L^p}
        % =-\alpha\|c(t)\|_p^{2}+\mu\|c(t)\|_p^{2-p}\left\langle|c(t)|^{p-2}c(t),\omega\ast[f'(U_s(a))c(t)]\right\rangle_{L^q,L^p}\nonumber\\
        % &\le&
        \le-\alpha\|c(t)\|_p^{2}+\mu\|c(t)\|_p^{2-p}\|c(t)\|_p^{p-1}\|\omega\|_1\|c(t)\|_p=-(\alpha-\mu\|\omega\|_1)g(t)
    \end{eqnarray}
 by Hölder and Young's convolution inequalities. Gronwall's inequality to \eqref{eq:derivative of map g} yields the desired result. Finally, for $p=2$, one uses a Fourier transform argument as in the proof of Lemma~\ref{lem:general estimates flows} to replace \eqref{eq:derivative of map g} by
    \begin{equation}\label{eq:Gamma 1 p=2}
     \dot{g}(t)\le -2(\alpha-\mu\|\widehat{\omega}\|_\infty)g(t).
    \end{equation} 
    This completes the proof of the lemma.
\end{proof}

The proof of the following can be done by mimicking its finite-dimension counterpart~\cite[Lemma~C.4]{tamekue2024control}.

\begin{lemma}\label{lem:on the SC}
Let $p \in [2, \infty]$, $a \in L^p(\mathbb{R}^d)$, and let $T > 0$, $t \in [0, T]$. The following identity holds,
\begin{equation}\label{eq:link between the SAs abstract}
DN(U_T(a))\, DU_t(U_{T-t}(a)) 
= DU_t(U_{T-t}(a)) \, DN(U_{T-t}(a))
+ \partial DU_t(U_{T-t}(a)) \, N(U_{T-t}(a)),
\end{equation}
where $\partial DU_{T - t}$ denotes the Gâteaux derivative of the Fréchet derivative $DU_t$ at $U_{T - t}(a)$.

% In particular, when $t = T$, we obtain
% \begin{equation}\label{eq:link between the SAs}
% DN(U_T(a)) 
% = DU_T(a) \, DN(a) \, DU_T(a)^{-1} 
% + \partial DU_T(a) \, N(a) \, DU_T(a)^{-1}.
% \end{equation}
\end{lemma}
% \begin{proof}
%     Set $c(t):=DU_t(b(t))$ where $b\in C^1(\R_+, L^p(\R^d))$. Then, $c$ is derivable by Proposition~\ref{pro:second differential of N}, and using \eqref{eq:equation derivative of the flow} and the chain rule, one finds
%     \begin{equation}\label{eq:first relation}
%          \dot{c}(t) = DN(U_t(b(t)))DU_t(b(t))+\partial DU_t(b(t))\dot{b}(t).
%     \end{equation}
%     On the other and, letting $b(t)=U_{T-t}(a)$ for some $a\in L^p(\R^d)$, one finds $c(t)=DU_t(U_{T-t}(a))$ and thus
%     \begin{equation}\label{eq:second relation}
%         \dot{c}(t)=DU_t(U_{T-t}(a))DN(U_{T-t}(a)).
%     \end{equation}
%     In fact, from $U_t(U_{T-t}(a))=U_T(a)$, one finds $DU_t(U_{T-t}(a))DU_{T-t}(a)=DU_T(a)$, so that using \eqref{eq:equation derivative of the flow}, one finds after derivation w.r.t., $t$,
%     \[
%     \dot{c}(t)DU_{T-t}(a)=DU_t(U_{T-t}(a))DN(U_{T-t}(a))DU_{T-t}(a)
%     \]
%     which yields \eqref{eq:second relation} after right multiplication by $[DU_{T-t}(a)]^{-1}$. Now, since $U_t(b(t))=U_t(U_{T-t}(a))=U_T(a)$ and $\dot{b}(t)=-N(U_{T-t}(a))$, identifying \eqref{eq:first relation} and \eqref{eq:second relation} yields
%     \[
%     DN(U_T(a))DU_t(U_{T-t}(a))=DU_t(U_{T-t}(a))DN(U_{T-t}(a))+\partial DU_t(U_{T-t}(a))N(U_{T-t}(a)).
%     \]
%      Finally, \eqref{eq:link between the SAs} follows by evaluating \eqref{eq:link between the SAs abstract} at $t=T$, and right multiplication by $[DU_T(a)]^{-1}$.
% \end{proof}

\bibliographystyle{siamplain}
\bibliography{references}
\end{document}